\pgfplotsset{compat=newest}
\crefname{hypothesis}{Hypothesis}{Hypotheses}
\Crefname{ALC@unique}{Line}{Lines}
\colorlet{texcscolor}{blue!50!black}
\colorlet{texemcolor}{red!70!black}
\colorlet{texpreamble}{red!70!black}
\colorlet{codebackground}{black!25!white!25}
\lstdefinestyle{siamlatex}{%
  style=tcblatex,
  texcsstyle=*\color{texcscolor},
  texcsstyle=[2]\color{texemcolor},
  keywordstyle=[2]\color{texemcolor},
  moretexcs={cref,Cref,maketitle,mathcal,text,headers,email,url},
}
\DeclareTotalTCBox{\code}{ v O{} }
{ %fontupper=\ttfamily\color{texemcolor},
  fontupper=\ttfamily\color{black},
  nobeforeafter,
  tcbox raise base,
  colback=codebackground,colframe=white,
  top=0pt,bottom=0pt,left=0mm,right=0mm,
  leftrule=0pt,rightrule=0pt,toprule=0mm,bottomrule=0mm,
  boxsep=0.5mm,
  #2}{#1}
\patchcmd\newpage{\vfil}{}{}{}
\tikzset{box/.style ={
		rectangle,
		rounded corners =5pt,
		minimum width =50pt,
		minimum height =20pt,
		inner sep=5pt,
		draw=blue}
}
\tikzset{zbox/.style ={
	rectangle,
	minimum width =50pt,
	minimum height =20pt,
	inner sep=5pt,
	draw=black}
}
\tikzset{ball/.style ={
circle,
minimum width =20pt,
minimum height =20pt,
inner sep=0.1pt,
draw=blue}
}
\tikzset{global scale/.style={
scale=#1,
every node/.append style={scale=#1}
}
}
\newcommand{\bR}{\mathbb{R}}
\newcommand{\bn}{\bm{n}}
\newcommand{\cI}{\mathcal{I}}
\newcommand{\cN}{\mathcal{N}}
\newcommand{\cT}{\mathcal{T}}
\newcommand{\cL}{\mathcal{L}}
\newcommand{\cP}{\mathcal{P}}
\colorlet{mark}{black}
\newcommand{\blue}[1]{{\color{mark}#1}}
\crefname{example}{example}{examples}
\Crefname{example}{Example}{Examples}
\newcounter{exmp}
\newcommand{\exmplabel}[1]{\refstepcounter{exmp}\label[example]{#1}}
\renewcommand{\algorithmicrequire}{\textbf{Input:}}
\title{The hard-constraint PINNs for interface optimal control problems\thanks{Submitted to the editors DATE.
\funding{The work of M. L was supported by NSTC of Taiwan (Grant Number 110-2115-M-A49-011-MY3). The work of  Y. S was supported by the Humboldt Research Fellowship for postdoctoral researchers. The work of X.Y was supported by the RGC TRS project T32-707/22-N. The work of H. Y was supported by the Fundamental Research Funds for the Central Universities, Nankai University (Grant Number 63221035) and Natural Science Foundation of Tianjin (Grant Number 22JCQNJC01120).}}}
\author{Ming-Chih Lai\thanks{Department of Applied Mathematics, National Yang Ming Chiao Tung University, 1001, Ta Hsueh Road, Hsinchu 300, Taiwan (\email{mclai@math.nctu.edu.tw}).}
\and Yongcun Song\thanks{Chair for Dynamics, Control, Machine Learning and Numerics$-$Alexander von Humboldt-Professorship, Department of Mathematics,  Friedrich-Alexander-Universit\"at Erlangen-N\"urnberg, 91058 Erlangen, Germany 
  (\email{ysong307$@$gmail.com}).}
\and Xiaoming Yuan\thanks{Department of Mathematics, The University of Hong Kong, Pok Fu Lam Road, Hong Kong,
	China 
  (\email{xmyuan@hku.hk}).}
\and Hangrui Yue\thanks{School of Mathematical Sciences, Nankai University, Tianjin 300071,
	China
  (\email{yuehangrui@gmail.com}).}
\and Tianyou Zeng\thanks{Department of Mathematics, The University of Hong Kong, Pok Fu Lam Road, Hong Kong,
		China (\email{logic@connect.hku.hk}) }
}
\begin{document}
\maketitle

%% ------------------------------------------------------------------
%% ABSTRACT
%% ------------------------------------------------------------------
\begin{tcbverbatimwrite}{tmp_\jobname_abstract.tex}
\begin{abstract}
We show that the physics-informed neural networks (PINNs), in combination with some recently developed discontinuity capturing neural networks, can be applied to solve optimal control problems subject to partial differential equations (PDEs) with interfaces and some control constraints.
The resulting algorithm is mesh-free and scalable to different PDEs, and it ensures the control constraints rigorously.
Since the boundary and interface conditions, as well as the PDEs, are all treated as soft constraints by lumping them into a weighted loss function, it is necessary to learn them simultaneously and there is no guarantee that the boundary and interface conditions can be satisfied exactly.
This immediately causes difficulties in tuning the weights in the corresponding loss function and training the neural networks.
To tackle these difficulties and guarantee the numerical accuracy, we propose to impose the boundary and interface conditions as hard constraints in PINNs by developing a novel neural network architecture.
The resulting hard-constraint PINNs approach guarantees that both the boundary and interface conditions can be satisfied exactly or with a high degree of accuracy, and they are decoupled from the learning of the PDEs.
Its efficiency is promisingly validated by some elliptic and parabolic interface optimal control problems.
\end{abstract}

\begin{keywords}
Optimal control, interface problems, physics-informed neural networks, discontinuity capturing neural networks, hard constraints
\end{keywords}

\begin{MSCcodes}
49M41, 68T07, 35Q90, 35Q93, 90C25
\end{MSCcodes}
\end{tcbverbatimwrite}
\input{tmp_\jobname_abstract.tex}
%% ------------------------------------------------------------------
%% END HEADER
%% ------------------------------------------------------------------

\section{Introduction}

Partial differential equations (PDEs) with interfaces capture important applications in science and engineering such as fluid mechanics \cite{layton2009using},  biological science \cite{geng2007treatment}, and material science \cite{hou1997hybrid}.
Typically, PDEs with interfaces are modeled as piecewise-defined PDEs in different regions coupled together with interface conditions, e.g., jumps in solution and flux across the interface, hence nonsmooth or even discontinuous solutions.
Numerical methods for solving PDEs with interfaces have been extensively studied in the literature, see e.g., \cite{gong2008immersed,he2022mesh,hu2022discontinuity,li1998fast,li2006immersed}.
In addition to numerical simulation of PDEs with interfaces, it is very often to consider how to control them with certain goals.
As a result, optimal control problems of PDEs with interfaces (or interface optimal control problems, for short) arise in various fields.
To mention a few, see applications in crystal growth \cite{meyer2006optimal} and composite materials \cite{zhang2020unfitted}.

In this paper, we consider interface optimal control problems that can be abstractly written as
\begin{equation}\label{eq:ocip-general-form}
	\begin{aligned}
		\min_{y \in Y, u \in U} \quad  J(y, u) \quad
		\mathrm{s.t.}  \quad  \cI(y, u) = 0, ~
		u \in U_{ad}.
	\end{aligned}
\end{equation}
Above, $Y$ and $U$ are Banach spaces,  $J: Y \times U \to \bR$ is the objective functional to minimize,
and $y \in Y$ and $u \in U$ are the state variable and the control variable, respectively.
The operator $\cI: Y \times U \to Z$ with $Z$ Banach space defines a PDE with interface. Throughout, we assume that $\cI(y,u)=0$ is well-posed. That is, for each $u\in U_{ad}$, there exists a unique $y$ that solves $\cI(y, u) = 0$ and varies continuously with respect to $u$.  The control constraint $u\in U_{ad}$ imposes point-wise boundedness constraints on $u$ with the admissible set $U_{ad}$ a nonempty closed subset of $U$.
Problem \eqref{eq:ocip-general-form} aims to find an optimal control $u^* \in U_{ad}$, which determines a state $y^*$ through $\cI(y^*, u^*) = 0$, such that $J(y, u)$ is minimized by the pair $(y^*, u^*)$.

Problem \eqref{eq:ocip-general-form} is challenging from both theoretical analysis and algorithmic design perspectives. First,  solving problem \eqref{eq:ocip-general-form} entails appropriate discretization schemes due to the presence of interfaces.  For instance, direct applications of standard finite element or finite difference methods fail to produce satisfactory solutions because of the difficulty in enforcing the interface conditions into numerical
discretization, see e.g.,~ \cite{babuska1970finite}.  Moreover, similar to the typical optimal control problems with PDE constraints studied in \cite{de2015numerical,hinze2008optimization,lions1971optimal,troltzsch2010optimal},
the resulting algebraic systems after discretization are high-dimensional and ill-conditioned and hence difficult to be solved.
Finally, the presence of the control constraint $u\in U_{ad}$ leads to problem \eqref{eq:ocip-general-form} a nonsmooth optimization problem. Consequently,  the well-known gradient-type methods like gradient descent methods, conjugate gradient methods, and quasi-Newton methods cannot be applied directly. All these obvious difficulties imply that meticulously designed algorithms are  required for solving problem \eqref{eq:ocip-general-form}.

\subsection{State-of-the-art}
Numerical methods for solving some optimal control problems modeled by (\ref{eq:ocip-general-form}) have been studied in the literature. These methods combine mesh-based numerical discretization schemes and optimization algorithms that can respectively enforce the interface conditions and tackle the nonsmoothness caused by the constraint $u\in U_{ad}$.
For the numerical discretization of elliptic interface optimal control problems, we refer to the  immersed finite element methods in \cite{su2023numerical,zhang2015immersed}, the interface-unfitted finite element method based on Nitsche's approach in \cite{yang2018interface}, and the interface concentrated finite element method in \cite{wachsmuth2016optimal}. Moreover, an immersed finite element method is proposed in  \cite{zhang2020immersed} for parabolic interface  optimal control problems.
Although these finite element methods have shown to be effective to some extent, their practical implementation is not easy, especially for interfaces with complex geometries in high-dimensional spaces. Meanwhile, when the shape of the domain is complicated, generating a suitable mesh is even a nontrivial task, which imposes additional difficulty in solving the problems.

Moreover, various optimization methods have been developed in the context of optimal control problems, such as the semismooth Newton methods \cite{hinze2008optimization,ulbrich2011semismooth}, the inexact Uzawa method \cite{song2019inexact}, the alternating direction method of multipliers (ADMM) \cite{glowinski2022application}, and the primal-dual methods \cite{biccari2023two,song2023accelerated}.  All these optimization methods can be applied to solve  (\ref{eq:ocip-general-form}). It is notable that, to implement the above methods, two PDEs with interfaces ($\cI(y,u)=0$ and its adjoint system) or a saddle point problem are usually required to be solved repeatedly. After some proper numerical discretization such as the aforementioned immersed and interface-unfitted finite element methods, the resulting systems are large-scale and ill-conditioned, and the computation cost for solving the PDEs with interfaces or the saddle point problem repeatedly could be extremely high in practice.

\subsection{Physics-informed neural networks}

In the past few years, thanks to the universal approximation property \cite{cybenko1989approximation,gripenberg2003approximation,hornik1991approximation} and the great expressivity \cite{raghu2017expressive} of deep neural networks (DNNs),  some deep learning methods have been proposed to solve various PDEs, such as the physics-informed neural networks (PINNs) \cite{raissi2019physics}, the deep Ritz method \cite{e2018deep},  the deep Galerkin method \cite{sirignano2018dgm}, and the neural Q-learning method  \cite{cohen2023neural}. Compared with the traditional numerical methods for PDEs, deep learning methods are usually mesh-free, easy to implement, scalable to different PDE settings, and can overcome the curse of dimensionality. Among them, PINNs methods have become one of the most prominent deep learning methods and have been extensively studied in e.g.,  \cite{karniadakis2021physics,kharazmi2019variational,lu2021deepxde,raissi2019physics}. However, in general, these PINNs methods require the smoothness of the solutions to the PDEs mainly because the activation functions used in a DNN are in general smooth (e.g., the sigmoid function) or at least continuous (e.g., the rectified linear unit (ReLU) function). Consequently, the above PINNs methods cannot be directly used to solve PDEs with interfaces whose solutions are only piecewise-smooth.

To overcome the aforementioned difficulty,  some PINNs methods tailored for PDEs with interfaces are proposed in e.g., \cite{he2022mesh,hu2022discontinuity,tseng2022cusp,wu2022inn} and these methods primarily focused on developing new ways of using DNNs to approximate the underlying nonsmooth or discontinuous solution.  In \cite{he2022mesh}, it is suggested to approximate the solution by two neural networks corresponding to the two distinct sub-domains determined by the interface, so that the solution remains smooth in each sub-domain. A similar idea can also be found in \cite{wu2022inn}.  In this way, the numerical results obtained by PINNs are satisfactory but one has to train two neural networks, which requires more computational effort.  To alleviate this issue, a discontinuity capturing shallow neural network (DCSNN) is proposed in \cite{hu2022discontinuity}. The DCSNN allows a single neural network to approximate piecewise-smooth functions by augmenting a coordinate variable, which labels different pieces of each subdomain, as a feature input of the neural network. Since the neural network can be shallow, the resulting number of trainable parameters is moderate and thus the neural network is relatively easier to train. Inspired by \cite{hu2022discontinuity}, a cusp-capturing neural network is proposed in \cite{tseng2022cusp} to solve elliptic PDEs with interfaces whose solutions are continuous but have discontinuous first-order derivatives on the interfaces. The cusp-capturing neural network contains the absolute value of the zero level set function of the interface as a feature input and can capture the solution cusps (where the derivatives are discontinuous) sharply. Finally, for completeness, we mention that other deep learning methods for solving PDEs with interfaces can be referred to \cite{guo2021deep,hu2023efficient,sun2023dirichlet,wang2020mesh} and the references therein.

In addition to solving PDEs,  various PINNs for solving optimal control problems of PDEs  have been proposed in the literature, see \cite{barry2022physics,lu2021physics,mowlavi2023optimal,song2023admm}.
In \cite{mowlavi2023optimal}, the vanilla PINN method \cite{raissi2019physics} is extended to optimal control problems by approximating the control variable with another neural network in addition to the one for the state variable. Then, these two neural networks are simultaneously trained by minimizing a loss function defined by a weighted sum of the objective functional and the residuals of the PDE constraint. Then, PINNs with hard constraints are proposed  in \cite{lu2021physics} for solving optimal design problems, where the PDE and additional inequality constraints are treated as hard constraints by an augmented Lagrangian method.  In \cite{barry2022physics}, it is suggested to solve an optimal control problem by deriving the first-order optimality system, and approximating the control variable, the state variable, and the corresponding adjoint variable by different DNNs, respectively. Then, a stationary point of the optimal control problem can be computed by minimizing a loss function that consists of the residuals of the first-order optimality system.  Recently,  the ADMM-PINNs algorithmic framework is proposed in \cite{song2023admm} that applies to a general class of optimal control problems with nonsmooth objective functional.   It is worth noting that all the above-mentioned PINNs methods are designed for only optimal control problems with smooth PDE constraints, and they cannot be directly applied to interface optimal control problems modeled by (\ref{eq:ocip-general-form}). To our best knowledge, there is still no literature for studying the application of PINNs on the interface optimal control problems modeled by \eqref{eq:ocip-general-form}.

\subsection{Main contributions}
Inspired by the great success of  PINNs in solving various PDEs and optimal control problems,  we develop some PINNs methods in this paper for solving problem \eqref{eq:ocip-general-form}.  We first show that following the ideas in \cite{barry2022physics},  the PINN method \cite{raissi2019physics} can be applied to solve the first-order optimality system of problem (\ref{eq:ocip-general-form}) with the variables approximated by DCSNNs. The resulting PINN method  is mesh-free and scalable to different PDEs with interfaces and ensures the control constraint $u\in U_{ad}$  rigorously.
However, as shown in Section \ref{subsec:elliptic-pinn-loss}, this method treats the underlying PDEs and the boundary and interface conditions as soft constraints by penalizing them in the loss function with constant penalty parameters. Hence, the boundary and interface conditions cannot be satisfied exactly, and the numerical errors are mainly accumulated on the boundary and the interface as validated by the numerical results in  \Cref{sec:ocip-elliptic-numexp}.  Moreover, such a soft-constraint PINN method treats the PDE and the boundary and interface conditions together during the training process and its effectiveness strongly depends on the choices of the weights in the loss function. Typically, there is no established rule or principle to systematically determine the weights, and setting them manually by trial and error is extremely challenging and time-consuming.

To tackle the above issues, we propose the hard-constraint PINNs, where
the boundary and interface conditions are imposed as hard constraints and can be treated separately from the PDEs in the training process.  
In this context, the term ``hard constraints" refers to that the boundary and interface conditions can be satisfied exactly or with a high degree of accuracy by the designed neural networks.
For this purpose, we develop a novel neural network architecture by generalizing the DCSNN to approximate the first-order optimality system of  \eqref{eq:ocip-general-form}.  To be concrete, we first follow the ideas in \cite{lu2021physics,sheng2021pfnn} to modify the output of the neural network to impose the boundary condition. Then, to impose the interface condition as hard constraints, we propose to construct an auxiliary function for the interface as an additional feature input of the neural network.  Such an auxiliary function depends on the geometrical property of the interface and its construction is nontrivial. To address this issue, we elaborate on the methods for constructing appropriate auxiliary functions for interfaces with different geometrical properties. This ensures the hard-constraint PINNs are highly implementable.
Numerical results for different types of interface optimal control problems are reported to validate the effectiveness and flexibility of the hard-constraint PINNs.  Finally, we mention that the proposed hard-constraint PINNs can be directly applied to solve $\cI(y,u)=0$ per se since it is involved as a part of problem (\ref{eq:ocip-general-form}) and its first-order optimality system.

\subsection{Organization}
The rest of the paper is organized as follows.
In \Cref{sec:preliminaries}, for the convenience of further discussion, we specify problem (\ref{eq:ocip-general-form}) as a distributed elliptic interface optimal control problem, where the control arises as a source term in the model. Then, we review some existing results on the DCSNN.
In \Cref{sec:elliptic-pinns-hc}, we first demonstrate the combination of the DCSNN and the PINN method for solving the distributed elliptic interface optimal control problem and then propose the hard-constraint PINN method to impose the boundary and interface conditions as hard constraints.
We test several elliptic optimal control problems in \Cref{sec:ocip-elliptic-numexp} to validate the efficiency and effectiveness of the proposed hard-constraint PINN method.
In \Cref{sec:furtuer-extension}, we showcase how to extend the hard-constraint PINN method by an elliptic interface optimal control problem where the control acts on the interface and a distributed parabolic interface optimal control problem. Some related numerical experiments are also presented to validate the effectiveness. Finally, we make some conclusions and comments for future work in \Cref{sec:conclusion}.

\section{Preliminaries}\label{sec:preliminaries}

In this section, we present some preliminaries that will be used throughout the following discussions.
First, to impose our ideas clearly, we specify the generic model (\ref{eq:ocip-general-form}) as a distributed elliptic interface optimal control problem and summarize some existing results.
We then review the DCSNN proposed in \cite{hu2022discontinuity} for elliptic PDEs with interfaces.

\subsection{A distributed elliptic interface optimal control problem}

Let $\Omega \subset \bR^d \ (d=2, 3)$ be a bounded domain with Lipschitz continuous boundary $\partial \Omega$, and $\Gamma \subset \Omega$ be an oriented embedded interface, which divides $\Omega$ into two non-overlapping subdomains $\Omega^-$ (inside) and $\Omega^+$ (outside) such that $\Omega=\Omega^-\cup \Omega^+\cup \Gamma$ and $\overline{\Omega^+} \cap \overline{\Omega^-} = \Gamma$, see  \Cref{fig:geometry} for an illustration.
We consider the following optimal control problem:
\begin{equation}\label{eq:ocip-elliptic}
	\begin{aligned}
		\underset{y\in L^2(\Omega),u\in L^2(\Omega)}{\min} \ & J(y, u) := \dfrac{1}{2} \int_\Omega (y - y_d)^2 dx + \dfrac{\alpha}{2} \int_\Omega u^2 dx,
	\end{aligned}
\end{equation}
subject to the state equation
\begin{equation}\label{eq:ip-elliptic}
		 -\nabla\cdot(\beta\nabla y)=u+f ~\text{in}~\Omega \backslash \Gamma, ~
		 [y]_{\Gamma}=g_0,~ [\beta\partial_{\bm{n}}y]_{\Gamma}= g_1 ~\text{on}~\Gamma,~
		 y= h_0 ~\text{on}~\partial\Omega,
\end{equation}
and the control constraint $u\in U_{ad}$ with
\begin{equation}\label{eq:control-constraint-elliptic}
	u \in U_{ad} := \{ u \in L^2(\Omega) : u_a(x) \leq u(x) \leq u_b(x) \text{~a.e.~in~} \Omega \} \subset L^2(\Omega),
\end{equation}
where $u_a, u_b \in L^2(\Omega)$.

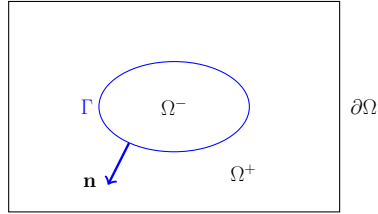
\begin{figure}[htbp]
	\centering
	\begin{tikzpicture}[global scale =0.4]
		\draw (2,3) rectangle (13,10);
		\draw (7.5,6.5) ellipse (2.5 and 1.5) [color=blue];
		\node at(4.6,6.5)  [font=\fontsize{20}{20}\selectfont] {\color{blue}$\Gamma$};
		\node at(7.5,6.5)  [font=\fontsize{20}{20}\selectfont] {$\Omega^-$};
		\node at(9.8,4.3)  [font=\fontsize{20}{20}\selectfont] {$\Omega^+$};
		\node at(4.7,4)  [font=\fontsize{20}{20}\selectfont] {$\mathbf{n}$};
		\node at(13.8,6.5)  [font=\fontsize{20}{20}\selectfont] {$\partial\Omega$};
		\draw[->][line width=0.9 pt][color=blue](6,5.29)--(5.3,3.9);
	\end{tikzpicture}
	\caption{The geometry of an interface problem: an illustration}
	\label{fig:geometry}
\end{figure}

Above,  the function $y_d\in L^2(\Omega)$ is the target and the constant $\alpha>0$ is a regularization parameter.  The functions $f\in L^2(\Omega), g_0\in H^{\frac{1}{2}}(\Gamma), g_1\in L^2(\Gamma)$, and $h_0\in H^{\frac{1}{2}}(\partial \Omega)$ are given, and $\beta$ is a positive piecewise-constant in $\Omega \backslash \Gamma$ such that $\beta = \beta^-$ in $\Omega^-$ and $\beta = \beta^+$ in $\Omega^+$.
The bracket $[\cdot]_\Gamma$ denotes the jump discontinuity across the interface $\Gamma$ and is defined by
$$
[y]_{\Gamma}(x):=\lim_{\tilde{x}\rightarrow x \text{~in~} \Omega^+}y(\tilde{x}) - \lim_{\tilde{x} \rightarrow x \text{~in~} \Omega^-} y(\tilde{x}), \forall x \in \Gamma.
$$
The operator $\partial_{\bn}$ stands for the normal  derivative on $\Gamma$, i.e. $\partial_{\bn}y (x) = \bn \cdot \nabla y(x)$ with $\bn\in \bR^d$ the outward unit normal vector of $\Gamma$.
In particular, we have
$$
[\beta\partial_{\bm{n}}y]_{\Gamma}(x):=\beta^+\lim_{\tilde{x} \rightarrow x \text{~in~} \Omega^+}\bm{n}\cdot\nabla y(\tilde{x}) - \beta^- \lim_{\tilde{x} \rightarrow x \text{~in~} \Omega^-} \bm{n}\cdot\nabla y(\tilde{x}), \forall x \in \Gamma.
$$
Moreover, $y = h_0 \text{~on~} \partial \Omega$ is called the \emph{boundary condition},  $[y]_\Gamma = g_0 \text{~on~} \Gamma$ is called the \emph{interface condition}, and $[\beta\partial_{\bm{n}}y]_{\Gamma}= g_1 \text{~on~} \Gamma$ is called the \emph{interface-gradient condition}.
For (\ref{eq:ocip-elliptic})-(\ref{eq:control-constraint-elliptic}), we have the following results.
\begin{theorem}[cf. \cite{zhang2015immersed}]\label{thm:optcond-elliptic}
	Problem (\ref{eq:ocip-elliptic})-(\ref{eq:control-constraint-elliptic}) admits a unique solution $(u^*, y^*)^\top \in U_{ad}\times L^2(\Omega)$, and the following first-order optimality system holds
	\begin{equation}\label{eq:oc}
		u^*=\mathcal{P}_{U_{ad}}\Big(-\frac{1}{\alpha}p^*\Big),
	\end{equation}
	where $\mathcal{P}_{U_{ad}}(\cdot)$ denotes the projection  onto $U_{ad}$, and  $p^*$ is the adjoint variable associated with $u^*$, which is obtained from the successive solution of the following two equations:
	\begin{equation}\label{eq:state}
 -\nabla\cdot(\beta\nabla y^*)=u^*+f ~ \text{in}~\Omega \backslash \Gamma,~ 			 [y^*]_{\Gamma}=g_0,~ [\beta\partial_{\bm{n}}y^*]_{\Gamma}= g_1 ~\text{on}~\Gamma,~
			y^*= h_0 ~\text{on}~\partial\Omega,
	\end{equation}
	\begin{equation}\label{eq:adjoint}
			 -\nabla\cdot(\beta\nabla p^*)=y^*-y_d ~\text{in}~\Omega \backslash \Gamma, ~
			[p^*] _{\Gamma}=0, ~ [\beta \partial_{\bm{n}}p^*]_{\Gamma}=0 ~\text{on}~\Gamma, ~
			 p^*=0 ~\text{on}~\partial\Omega.
	\end{equation}
\end{theorem}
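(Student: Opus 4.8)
The plan is to establish the three assertions in \Cref{thm:optcond-elliptic} --- existence and uniqueness of the optimal pair, the characterization \eqref{eq:oc} of the optimal control via the projection formula, and the coupled state/adjoint systems \eqref{eq:state}--\eqref{eq:adjoint} --- by the standard reduced-functional (control-to-state) argument for linear-quadratic PDE-constrained optimization, adapted to the interface setting. First I would recall the well-posedness of the state equation \eqref{eq:ip-elliptic}: for each $u \in L^2(\Omega)$ there is a unique weak solution $y = y(u) \in H^1(\Omega^-) \oplus H^1(\Omega^+)$ (or the appropriate broken Sobolev space incorporating the jump data $g_0, g_1, h_0$), and the solution operator $u \mapsto y(u)$ is affine and continuous from $L^2(\Omega)$ into $L^2(\Omega)$; this is precisely the well-posedness assumption made after \eqref{eq:ocip-general-form} and is the content cited from \cite{zhang2015immersed}. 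Writing $y(u) = Su + y_f$ with $S$ a bounded linear operator and $y_f$ the contribution of $f$ and the boundary/interface data, I would then introduce the reduced cost functional $j(u) := J(y(u), u) = \tfrac12 \|Su + y_f - y_d\|_{L^2(\Omega)}^2 + \tfrac{\alpha}{2}\|u\|_{L^2(\Omega)}^2$.

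The second step is to show $j$ admits a unique minimizer over $U_{ad}$. The set $U_{ad}$ from \eqref{eq:control-constraint-elliptic} is nonempty (assuming $u_a \le u_b$), closed, convex, and bounded in $L^2(\Omega)$; the functional $j$ is continuous, and strictly convex because of the term $\tfrac{\alpha}{2}\|u\|^2$ with $\alpha > 0$ (the tracking term $\tfrac12\|Su + y_f - y_d\|^2$ is convex as the composition of a norm-square with an affine map). Coercivity on $U_{ad}$ is immediate (indeed $U_{ad}$ is bounded), and $j$ is weakly lower semicontinuous as a continuous convex functional. Hence the direct method in the calculus of variations yields existence, and strict convexity yields uniqueness of $u^* \in U_{ad}$; then $y^* := y(u^*)$ is the associated unique state, giving the first claim.

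The third step is to derive the first-order optimality condition. Since $j$ is Fréchet differentiable and convex and $U_{ad}$ is convex, $u^*$ minimizes $j$ over $U_{ad}$ if and only if the variational inequality $\langle j'(u^*), u - u^* \rangle_{L^2(\Omega)} \ge 0$ holds for all $u \in U_{ad}$. Computing the derivative, $j'(u^*) = S^*(Su^* + y_f - y_d) + \alpha u^* = S^*(y^* - y_d) + \alpha u^*$. I would identify $S^*(y^* - y_d)$ with the adjoint state $p^*$ by introducing $p^*$ as the weak solution of \eqref{eq:adjoint} --- the adjoint interface problem with homogeneous jump, interface-gradient, and boundary data and right-hand side $y^* - y_d$ --- and verifying, via integration by parts on $\Omega^-$ and $\Omega^+$ separately and careful bookkeeping of the interface terms, the duality identity $\langle Su, z \rangle_{L^2(\Omega)} = \langle u, p_z \rangle_{L^2(\Omega)}$ where $p_z$ solves the adjoint problem with data $z$; the homogeneity of the jump conditions in \eqref{eq:adjoint} is exactly what makes the boundary and interface contributions in the integration by parts vanish. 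This gives $j'(u^*) = p^* + \alpha u^*$, so the variational inequality becomes $\langle p^* + \alpha u^*, u - u^* \rangle \ge 0$ for all $u \in U_{ad}$, which is the characterization of the projection: $u^* = \mathcal{P}_{U_{ad}}(-\tfrac1\alpha p^*)$, i.e. \eqref{eq:oc}. Since $U_{ad}$ is defined by pointwise box constraints, this projection is the pointwise clipping $u^*(x) = \min\{u_b(x), \max\{u_a(x), -\tfrac1\alpha p^*(x)\}\}$. Finally, unwinding the definitions of $y^* = y(u^*)$ and $p^*$ reproduces \eqref{eq:state} and \eqref{eq:adjoint}.

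The main obstacle is the adjoint integration-by-parts identity in the interface setting: one must justify that the weak adjoint solution $p^*$ of \eqref{eq:adjoint} lies in a space with enough regularity (piecewise $H^1$, with well-defined one-sided traces and normal fluxes on $\Gamma$) for Green's formula to apply on each subdomain, and then check that all the interface integrals over $\Gamma$ cancel --- this cancellation relies crucially on pairing the inhomogeneous jump data $[y^*]_\Gamma = g_0$, $[\beta\partial_{\bm n} y^*]_\Gamma = g_1$ of the state against the \emph{homogeneous} jump conditions $[p^*]_\Gamma = 0$, $[\beta\partial_{\bm n} p^*]_\Gamma = 0$ of the adjoint, together with $y^* = h_0$, $p^* = 0$ on $\partial\Omega$. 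Once the duality $S^* z = p_z$ is in hand, everything else is routine convex analysis. Since these are by now classical arguments for linear-quadratic interface optimal control and the statement is attributed to \cite{zhang2015immersed}, I would present the derivation concisely, emphasizing the interface terms and referring to \cite{zhang2015immersed,troltzsch2010optimal} for the standard parts.
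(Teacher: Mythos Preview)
Your proposal is correct and follows the standard reduced-functional argument for linear-quadratic PDE-constrained optimal control, with appropriate attention to the interface integration-by-parts identity. However, note that the paper does not actually prove this theorem: it is stated with the attribution ``cf.~\cite{zhang2015immersed}'' and no proof is given; the paper simply cites the result and proceeds to use it. Your sketch is thus a faithful reconstruction of the classical argument underlying the cited reference rather than a comparison point against anything in the present paper.
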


Since problem (\ref{eq:ocip-elliptic})-(\ref{eq:control-constraint-elliptic}) is known to be convex \cite{zhang2015immersed}, the optimality conditions (\ref{eq:oc})-(\ref{eq:adjoint}) are also sufficient.  Furthermore, 
substituting (\ref{eq:oc}) into (\ref{eq:state}) yields
\begin{equation}\label{eq:state_reduced}
		 -\nabla\cdot(\beta\nabla y^*)=\mathcal{P}_{U_{ad}}\Big(-\frac{p^*}{\alpha}\Big)+f ~\text{in}~\Omega \backslash \Gamma, ~
		 [y^*]_{\Gamma}=g_0,~ [\beta\partial_{\bm{n}}y^*]_{\Gamma}= g_1 ~\text{on}~\Gamma,~
		 y^*= h_0 ~\text{on}~\partial\Omega.
\end{equation}
Therefore, solving \eqref{eq:ocip-elliptic}-(\ref{eq:control-constraint-elliptic}) is equivalent to solving equations (\ref{eq:adjoint}) and  (\ref{eq:state_reduced}) simultaneously.

\subsection{Discontinuity capturing shallow neural networks}\label{subsec:dscnn}

In this subsection, we briefly review and explain the idea of the DCSNN \cite{hu2022discontinuity} using  problem \eqref{eq:ip-elliptic}.

First, note that although the solution $y$ to \eqref{eq:ip-elliptic} is only a $d$-dimensional piecewise-smooth function,  it can be extended to a $(d+1)$-dimensional function $\tilde{y}(x, z)$, which is smooth  on the domain $\Omega\times \mathbb{R}$ and satisfies
\begin{equation}\label{eq:y-smooth-ext}
	y(x) = \begin{cases}
		\tilde{y}(x, 1), & \text{~if~} x \in \Omega^+, \\
		\tilde{y}(x, -1), & \text{~if~} x \in \Omega^-,
	\end{cases}
\end{equation}
where the additional input $z\in \mathbb{R}$ is the augmented coordinate variable that labels $\Omega^+$ and $\Omega^-$. Note that such a smooth extension $\tilde{y}$ always exists, since the function $y$ can be viewed as a smooth function defined on a closed subset of $\bR^{d+1}$, see \cite{lee2012introduction}. The extension (\ref{eq:y-smooth-ext}) is not unique since there are infinitely many choices of $\tilde{y}(x, z)$ for $z \neq \pm 1$.

Substituting (\ref{eq:y-smooth-ext}) to (\ref{eq:ip-elliptic}), it is easy to show that $\tilde{y}(x,z)$ satisfies the following equation
\begin{equation}\label{eq:y_smooth}
	\left\{
	\begin{aligned}
		& -\Delta_x \tilde{y}(x, z)=\left\{
		\begin{aligned}
			& \dfrac{1}{\beta^+} \left(  u(x)+f(x)  \right) \quad \text{~if~} x\in\Omega^+, z=1\\
			& \dfrac{1}{\beta^-}\left(  u(x)+f(x)  \right)  \quad \text{~if~}x\in\Omega^-, z=-1
		\end{aligned}
		\right.,\\
		&\tilde{y}(x, 1) - \tilde{y}(x, -1) = g_0(x), \quad \text{~if~} x \in \Gamma \\
		& \beta^+ \bn \cdot \nabla \tilde{y}(x, 1) - \beta^- \bn \cdot \nabla \tilde{y}(x, -1) = g_1(x), \quad \text{~if~} x \in \Gamma, \\
		&\tilde{y}(x, 1) = h_0(x), \quad \text{~if~} x \in \partial \Omega.
	\end{aligned}
	\right.
\end{equation}
Hence, the solution $y$ to problem \eqref{eq:ip-elliptic} can be obtained from (\ref{eq:y-smooth-ext}) with $\tilde{y}(x, z)$ computed by solving (\ref{eq:y_smooth}).
For solving problem (\ref{eq:y_smooth}), we note that the extended function  $\tilde{y}$ is smooth and one can construct a neural network $\hat{y}(x, z; \theta)$ with $d+1$ inputs, which is referred to as the DCSNN \cite{hu2022discontinuity}, to approximate $\tilde{y}$.
Since $\tilde{y}$ is continuous, it follows from the universal approximation theorem  \cite{cybenko1989approximation} that one can choose  $\hat{y}$ as a shallow neural network.
Then, the PINN method \cite{raissi2019physics} can be applied to solve  \eqref{eq:y_smooth} and
we refer to \cite{hu2022discontinuity} for the details.

\section{ The hard-constraint PINN method for (\ref{eq:ocip-elliptic})-(\ref{eq:control-constraint-elliptic})}\label{sec:elliptic-pinns-hc}

In this section, we first demonstrate that, combined with the DCSNN,  the PINN method \cite{raissi2019physics} can be applied to solve the reduced optimality conditions (\ref{eq:adjoint})-(\ref{eq:state_reduced}) and hence to solve (\ref{eq:ocip-elliptic})-(\ref{eq:control-constraint-elliptic}). Then, we impose the boundary and interface conditions in (\ref{eq:adjoint})-(\ref{eq:state_reduced}) as hard constraints by designing two novel neural networks to approximate $y$ and $p$, and propose the hard-constraint PINN method for solving (\ref{eq:ocip-elliptic})-(\ref{eq:control-constraint-elliptic}).

\subsection{A soft-constraint PINN method for (\ref{eq:ocip-elliptic})-(\ref{eq:control-constraint-elliptic})}\label{subsec:elliptic-pinn-loss}

We recall that,  for solving (\ref{eq:ocip-elliptic})-(\ref{eq:control-constraint-elliptic}), it is sufficient to solve the equations (\ref{eq:adjoint}) and  (\ref{eq:state_reduced}) simultaneously.
First, we apply two DCSNNs to approximate  $y$ and $p$. To this end, let $\tilde{y}: \Omega\times \mathbb{R}\rightarrow \mathbb{R}$ and $\tilde{p}: \Omega\times \mathbb{R}\rightarrow \mathbb{R}$ be two smooth extensions of  $y$ and $p$ respectively, which satisfy  \eqref{eq:y-smooth-ext} and
\begin{equation}\label{eq:p-smooth-ext}
	p(x) = \begin{cases}
		\tilde{p}(x, 1), & \text{~if~} x \in \Omega^+, \\
		\tilde{p}(x, -1), & \text{~if~} x \in \Omega^-.
	\end{cases}
\end{equation}
Then, substituting (\ref{eq:y-smooth-ext}) and (\ref{eq:p-smooth-ext}) into equations  \eqref{eq:state_reduced} and  \eqref{eq:adjoint}, we obtain that  $\tilde{y}$ and $\tilde{p}$ satisfy the following system:
\small
\begin{eqnarray}\label{eq:optcond-elliptic}
	\hspace{2em}\left\{~
	\begin{aligned}
		& -\Delta_x \tilde{y}(x, z)=\left\{
		\begin{aligned}
			& \dfrac{1}{\beta^+} \left(  f(x) + \mathcal{P}_{[u_a(x), u_b(x)]}\left( -\frac{1}{\alpha} \tilde{p}(x, z)\right) \right) \quad \text{~if~} x\in\Omega^+, z=1\\
			& \dfrac{1}{\beta^-} \left( f(x) + \mathcal{P}_{[u_a(x), u_b(x)]}\left(-\frac{1}{\alpha} \tilde{p}(x, z)\right) \right) \quad \text{~if~}x\in\Omega^-, z=-1
		\end{aligned}
		\right.,\\
		& \tilde{y}(x, 1) -\tilde{y}(x, -1)=g_0(x), \quad \text{~if~} x \in \Gamma, \\
		& \beta^+ \bn \cdot \nabla_x \tilde{y}(x, 1) - \beta^- \bn \cdot \nabla_x \tilde{y}(x, -1) = g_1(x), \quad \text{~if~} x \in \Gamma, \\
		& \tilde{y}(x, 1)=h_0(x), \quad \text{~if~} x \in \partial\Omega, \\ 
		& -\Delta_x \tilde{p}(x, z) = \left\{
		\begin{aligned}
			& \dfrac{1}{\beta^+} (\tilde{y}(x, z) - y_d(x)) \quad \text{~if~} x \in \Omega^+, z=1 \\
			& \dfrac{1}{\beta^-} (\tilde{y}(x, z) - y_d(x)) \quad \text{~if~} x \in \Omega^-, z=-1
		\end{aligned}
		\right.,\\
		& \tilde{p}(x, 1) -\tilde{p}(x, -1)=0, \quad \text{~if~} x \in \Gamma, \\
		& \beta^+ \bn \cdot\nabla_x \tilde{p}(x, 1) - \beta^- \bn \cdot\nabla_x \tilde{p}(x, -1) = 0, \quad \text{~if~} x \in \Gamma, \\
		& \tilde{p}(x, 1)=0, \quad \text{~if~} x \in \partial\Omega.
	\end{aligned}
	\right.
\end{eqnarray}
\normalsize

Once $\tilde{y}$ and $\tilde{p}$ are computed by solving \eqref{eq:optcond-elliptic}, the solutions $y$ and $p$ to (\ref{eq:adjoint}) and  (\ref{eq:state_reduced})  can be obtained using  \eqref{eq:y-smooth-ext} and (\ref{eq:p-smooth-ext}).  Next, we solve  \eqref{eq:optcond-elliptic} by the PINN method \cite{raissi2019physics}. For this purpose,  we first sample training sets $\cT := \{(x^i, z^i)\}_{i=1}^{M} \subset (\Omega^+ \times \{1\}) \cup (\Omega^- \times \{-1\}), \cT_B := \{x_B^i\}_{i=1}^{M_B} \subset \partial \Omega$, and $\cT_\Gamma := \{x_\Gamma^i\}_{i=1}^{M_\Gamma} \subset \Gamma$.
We then apply two DCSNNs $\hat{y}(x, z; \theta_y)$ and $\hat{p}(x, z; \theta_p)$ to approximate $\tilde{y}(x, z)$ and $\tilde{p}(x, z)$, respectively; and train
the neural networks by minimizing the following loss function:
\footnotesize
\begin{gather}\label{eq:loss-elliptic}
	\begin{aligned}
	\mathcal{L}(\theta_y, \theta_p) = & \frac{w_{y, r}}{M} \sum_{i=1}^{M} \left| -\Delta_x \hat{y}(x^i,z^i; \theta_{y})-\frac{\mathcal{P}_{[u_a(x^i), u_b(x^i)]}(-\frac{1}{\alpha} \hat{p}(x^i, z^i; \theta_p)) + f(x^i)}{\beta^\pm} \right| ^2 \\
	& + \frac{w_{y, b}}{M_b}\sum_{i=1}^{M_b}|\hat{y}(x_B^i,1; \theta_{y}) - h_0(x_B^i)|^2  + \frac{w_{y, \Gamma}}{M_{\Gamma}}\sum_{i=1}^{M_\Gamma} \left|\hat{y}(x_\Gamma^i, 1; \theta_{y}) - \hat{y}(x_\Gamma^i, -1; \theta_{y}) - g_0(x_\Gamma^i) \right|^2 \\
	& + \frac{w_{y, \Gamma_n}}{M_{\Gamma}} \sum_{i=1}^{M_\Gamma} \left| \beta^+ \bn \cdot \nabla_x \hat{y}(x_i^{\Gamma}, 1; \theta_{y}) - \beta^-\bn \cdot  \nabla_x \hat{y}(x_i^{\Gamma}, -1; \theta_{y}) - g_1(x_\Gamma^i) \right|^2 \\
	& + \frac{w_{p, r}}{M} \sum_{i=1}^{M} \left|-\Delta_x \hat{p}(x_i, z_i; \theta_{p})-\frac{\hat{y}(x_i, z_i; \theta_{y}) - y_d(x_i)}{\beta^\pm} \right|^2 + \frac{w_{p, b}}{M_b}\sum_{i=1}^{M_b}|\hat{p}(x^i_B,1; \theta_{p})|^2  \\
	& + \frac{w_{p, \Gamma}}{M_{\Gamma}}\sum_{i=1}^{M_\Gamma} \left|\hat{p}(x_\Gamma^i, 1; \theta_{p}) - \hat{p}(x_\Gamma^i, -1; \theta_{p})\right|^2 \\
	& + \frac{w_{p, \Gamma_n}}{M_{\Gamma}}\sum_{i=1}^{M_\Gamma} \left|\beta^+ \bn \cdot \nabla_x \hat{p}(x_i^{\Gamma}, 1; {\theta_{p}}) - \beta^- \bn \cdot \nabla_x \hat{p}(x_i^{\Gamma}, -1; {\theta_{p}}) \right|^2,
	\end{aligned}
\end{gather}
\normalsize
where $w_{y, *}$ and $w_{p, *}$ are the weights for each term.

Note that the loss function \eqref{eq:loss-elliptic} is nonnegative, and if $\mathcal{L}(\theta_y, \theta_p) $ goes to zero, then the resulting $(\hat{y}, \hat{p})$ gives an approximate solution to \eqref{eq:optcond-elliptic}. Moreover, for any function $v: \Omega\rightarrow \mathbb{R}$, we have
$$\mathcal{P}_{[u_a(x), u_b(x)]}(v(x)) = (L_1 \circ L_2 \circ v)(x),$$
where $L_1(v(x)) := \text{ReLU}(v(x)-u_a(x))+u_a(x)$ and $L_2(v(x)) :=  -\text{ReLU}(u_b(x)-v(x))+u_b(x)$ with $\text{ReLU}(v(x))=\max\{v(x),0\}$ the ReLU function.
This implies that the projection $\mathcal{P}_{[u_a(x^i), u_b(x^i)]}(-\frac{1}{\alpha} \hat{p}(x^i, z^i; \theta_p))$ can be viewed as a composition of  $-\frac{1}{\alpha}\hat{p}(x,z;\theta_{p})$ and a two-layer neural network with ReLU as the activation functions.
As a result, the loss function $ \mathcal{L}(\theta_y, \theta_p) $ in  \eqref{eq:loss-elliptic} can be minimized by a stochastic optimization method with all the derivatives $\Delta_x \hat{y}$, $\nabla_x\hat{y}$, $\Delta_x \hat{p}$, $\nabla_x\hat{p}$, and the gradients $\frac{\partial \mathcal{L}}{\partial \theta_{y}}$, $\frac{\partial \mathcal{L}}{\partial \theta_{p}}$ computed by automatic differentiation.

We summarize the above PINN method  in \Cref{alg:pinn}.
\begin{algorithm}[ht]
	\caption{A soft-constraint PINN method  for \eqref{eq:ocip-elliptic}-\eqref{eq:control-constraint-elliptic}}
	\begin{algorithmic}[1]
		\REQUIRE Weights $w_{y, *}, w_{p, *}$ in \eqref{eq:loss-elliptic}.
		\STATE Initialize the neural networks $\hat{y}(x, z; \theta_y)$ and $\hat{p}(x, z; \theta_p)$ with $\theta_y^0$ and $\theta_p^0$.
		\STATE Sample training sets $\cT = \{x^i, z^i\}_{i=1}^M \subset \Omega \times \{\pm 1\}$, $\cT_B = \{x_B^i\}_{i=1}^{M_B} \subset \partial \Omega$ and $\cT_\Gamma = \{x_\Gamma^i\}_{i=1}^{M_\Gamma} \subset \Gamma$.
		\STATE Calculate the values of $f, y_d$ over $\cT$, the value of $h$ over $\cT_B$ and the value of $g_0, g_1$ over $\cT_\Gamma$.
		\STATE Train the neural networks $\hat{y}(x, z; \theta_y)$ and $\hat{p}(x, z; \theta_p)$ to identify the optimal parameters $\theta_y^*$ and $\theta_p^*$ by minimizing \eqref{eq:loss-elliptic}.
		\STATE $y(x) \gets \begin{cases}
			\hat{y}(x, 1;\theta_y^*), & \text{~if~} x \in \Omega^+ \\
			\hat{y}(x, -1;\theta_y^*), & \text{~if~} x \in \Omega^-
		\end{cases}$,~  $ p(x) \gets \begin{cases}
			\hat{p}(x, 1;\theta_p^*), & \text{~if~} x \in \Omega^+ \\
			\hat{p}(x, -1;\theta_p^*), & \text{~if~} x \in \Omega^-
		\end{cases}$, \\ $u(x) \gets \mathcal{P}_{[u_a(x), u_b(x)]}\left(-\frac{1}{\alpha} p(x)\right)$.
		\ENSURE Approximate solutions $u(x)$ and $y(x)$ to \eqref{eq:ocip-elliptic}-\eqref{eq:control-constraint-elliptic}.
	\end{algorithmic}
	\label{alg:pinn}
\end{algorithm}

It is easy to see that the computed control $u$ satisfies the control constraint $u \in U_{ad}$ strictly.
Additionally,  \Cref{alg:pinn} is mesh-free and is very flexible in terms of the geometries of the domain and the interface.  However, note that in  \Cref{alg:pinn}, the boundary and interface conditions are penalized in the loss function  \eqref{eq:loss-elliptic} with constant penalty parameters. Hence, these conditions are treated as soft constraints and cannot be satisfied rigorously by the solutions $y$ and $p$ computed by  \Cref{alg:pinn}.  Moreover, such a soft-constraint approach treats the PDE and the boundary and interface conditions together during the training process and its effectiveness strongly depends on the choices of the weights in the loss function  \eqref{eq:loss-elliptic}. Manually determining these weights through trial and error is extremely challenging and time-demanding.
The numerical results in \Cref{sec:ocip-elliptic-numexp} also show that this soft-constraint approach generates solutions with numerical errors mainly accumulated on the boundaries and the interfaces. To tackle the above issues, we consider imposing the boundary and interface conditions as hard constraints and can be treated separately from the PDE in the training of the neural networks.

\subsection{Hard-constraint boundary and  interface conditions}\label{subsec:hc-bc}

In this subsection, we elaborate on the construction of new neural networks to approximate the state variable $y$ and the adjoint variable $p$ by modifying the DSCNNs $\hat{y}(x, z; \theta_y)$ and $\hat{p}(x, z; \theta_p)$ in   \Cref{alg:pinn} so that the boundary and interface conditions in (\ref{eq:adjoint}) and  (\ref{eq:state_reduced})  are imposed as hard constraints.  In the following discussions, for the sake of simplicity, we still denote $\theta_y$ and $\theta_p$ the parameters of the neural networks with hard-constraint boundary and interface conditions.

Let $y \in L^2(\Omega)$ be the solution of~(\ref{eq:state_reduced}), then it satisfies
\begin{equation*}%\label{eq:bcic-nonhomo}
	\begin{aligned}
		& y = h_0 \text{~on~} \partial \Omega, \quad
		[y]_\Gamma = g_0, \quad [\beta \partial_{\bn} y]_\Gamma = g_1 \text{~on~} \Gamma,
	\end{aligned}
\end{equation*}
for some functions $g_0, g_1: \Gamma \to \bR$ and $h_0: \partial \Omega \to \bR$.
We first introduce two functions $g, h: \overline{\Omega} \to \bR$ satisfying
\begin{equation}\label{eq:g-func-prop}
	g|_{\partial \Omega} = h_0, \quad [g]_\Gamma = g_0, \quad g|_{\Omega^+} \in C^2(\overline{\Omega^+}), \quad g|_{\Omega^-} \in C^2(\overline{\Omega^-}),
\end{equation}
\begin{equation}\label{eq:h-func-prop}
	h \in C^2(\overline{\Omega}), \quad h(x) = 0 \text{~if~and~only~if~} x \in \partial \Omega.
\end{equation}
If the functions $g_0$ and $h_0,$ the interface $\Gamma$, and the boundary $\partial \Omega$ admit analytic forms, it is usually easy to construct $g$ and $h$ with analytic expressions.
Some discussions can be found in \cite{lagari2020systematic,lagaris1998artificial,lu2021physics}.
Otherwise, we can either adopt the method in \cite{sheng2021pfnn} or construct $g$ and $h$ by training two neural networks.
For instance, we can train a DCSNN $\hat{g}(x, z; \theta_g)$ and a neural network $\hat{h}(x; \theta_h)$ with smooth activation functions (e.g. the sigmoid function or the hyperbolic tangent function) by minimizing the following loss functions:
\begin{equation}\label{eq:loss_g}
	\frac{w_{1g}}{M_b}\sum_{i=1}^{M_b}|\hat{g}(x_B^i,1; \theta_{y}) - h_0(x_B^i)|^2  +    \frac{w_{2g}}{M_{\Gamma}}\sum_{i=1}^{M_\Gamma} \left|\hat{g}(x_\Gamma^i, 1; \theta_{g}) - \hat{g}(x_\Gamma^i, -1; \theta_{g})-g_0(x_{\Gamma}^i)\right|^2,
\end{equation}
\begin{equation}\label{eq:loss_h}
	\frac{w_{1h}}{M_b}\sum_{i=1}^{M_b}|\hat{h}(x_B^i; \theta_{h})|^2+\frac{w_{2h}}{M}\sum_{i=1}^{M}|\hat{h}(x^i; \theta_{h})-\bar{h}(x^i)|^2,
\end{equation}
where $w_{1g}, w_{2g}, w_{1h}$, and $w_{2h}>0$ are the weights, $\{x^i\}_{i=1}^{M} \subset \Omega$, $\{x_B^i\}_{i=1}^{M_B} \subset \partial \Omega$, and $\{x_\Gamma^i\}_{i=1}^{M_\Gamma} \subset \Gamma$ are the training points, and $\bar{h}(x)\in C^2(\Omega)$ is a known function satisfying $\bar{h}(x)\neq 0$ in $\Omega$, e.g. $\bar{h}(x) = \min_{\hat{x}\in \partial\Omega} \{\| x- \hat{x} \|_2^4 \}$.

With the functions $g$ and $h$ satisfying  \eqref{eq:g-func-prop} and  \eqref{eq:h-func-prop}, we approximate $y$ by
\begin{equation}\label{eq:neural-form-hard-bcij}
	\hat{y}(x; \theta_y) = g(x) + h(x) \cN_y(x, \phi(x); \theta_y),
\end{equation}
where $\cN_y(x,\phi(x); \theta_y)$ is a neural network with smooth activation functions and parameterized by $\theta_y$, and $\phi: \overline{\Omega} \to \bR$ satisfying
\begin{equation}\label{eq:aux-func-prop}
	\begin{aligned}
		& \phi \in C(\overline{\Omega}), \quad \phi|_{\Omega^+} \in C^2(\overline{\Omega^+}), \quad \phi|_{\Omega^-} \in C^2(\overline{\Omega^-}),  \quad
		[\phi]_\Gamma = 0, \quad [\beta \partial_{\bm{n}} \phi]_\Gamma \neq 0 \text{~a.e. on}~\Gamma
	\end{aligned}
\end{equation}
is an auxiliary function for the interface $\Gamma$.
It follows from \eqref{eq:h-func-prop} and \eqref{eq:aux-func-prop} that $h(x)\cN_y(x, \phi(x); \theta_y)$ is a continuous function of $x$ over $\overline{\Omega}$.

For the neural network $ \hat{y}(x; \theta_y) $ given by  \eqref{eq:neural-form-hard-bcij}, it is easy to verify that
\begin{eqnarray*}
&&	[\hat{y}]_\Gamma (x) = [g]_\Gamma (x) + [h(\cdot) \cN_y(\cdot, \phi(\cdot))]_\Gamma (x) = g_0(x), \quad \forall x \in \Gamma,\\
&&	\hat{y}|_{\partial \Omega}(x) = g|_{\partial \Omega} (x) + h|_{\partial \Omega} (x) \left( \cN_y (\cdot, \phi(\cdot))|_{\partial \Omega}\right)(x)= h_0(x), \quad \forall x \in \partial \Omega.
\end{eqnarray*}
Hence, the interface condition $[{y}]_\Gamma = g_0$ and the boundary condition $y|_{\partial \Omega} = h_0$ are satisfied exactly by $\hat{y}(x; \theta_y)$ if functions $g$, $h$, and $\phi$ are given in analytic expressions, and can be satisfied with a high degree of accuracy if $g$, $h$, and $\phi$ are approximated by pretrained neural networks.

Furthermore, we have
\small
\begin{equation}\label{eq:ij-gradient-comp}
	\begin{aligned}
		[\beta \partial_{\bn} \hat{y}]_\Gamma (x) & = [\beta \partial_{\bn} g]_\Gamma(x) + [\beta \partial_{\bn} h \cN_y(\cdot, \phi(\cdot))]_\Gamma (x) \\
		& = [\beta \partial_{\bn} g]_\Gamma (x) + (\beta^+ - \beta^-)\big(\cN_y(x, \phi(x))(\bn \cdot \nabla h(x)) + h(x) (\bn \cdot \nabla_x \cN(x, \phi(x)))\big) \\
		& \quad \qquad + \frac{\partial \cN_y}{\partial \phi} \left(h(x) [\beta \partial_{\bn} \phi]_\Gamma (x) \right), \quad \forall x \in \Gamma,
	\end{aligned}
\end{equation}
\normalsize
which implies that the interface-gradient condition $ [\beta \partial_{\bm{n}} {y}]_\Gamma =g_1$ cannot be exactly satisfied by $\hat{y}(x; \theta_y)$ and has to be treated as a soft constraint,  see  \eqref{eq:loss-elliptic-hc} for the details.

\begin{remark}
	Note that the neural network $\hat{y}(x; \theta_y)$ given by  \eqref{eq:neural-form-hard-bcij} reduces to the DSCNN $\hat{y}(x,z; \theta_y)$  used in  \Cref{alg:pinn} by taking $g = 0, h = 1$ and replacing $\phi$ with the piecewise constant
	\[ z(x) = \begin{cases}
		1, \quad &\text{if~} x \in \Omega^+, \\
		-1, \quad &\text{if~} x \in \Omega^-.
	\end{cases} \]
	However, this auxiliary variable $z$ does not satisfy the assumptions in  \eqref{eq:aux-func-prop}.
	Hence, the auxiliary function $\phi$ is a nontrivial generalization of the augmented coordinate variable $z$ introduced in the DSCNN.
\end{remark}

Given $\cN_y$ a neural network with smooth activation functions, we have that
$\cN_y \in C^\infty(\overline{\Omega})$. Moreover, it follows from the smooth assumptions of $g$ and $\phi$ in  \eqref{eq:g-func-prop} and  \eqref{eq:aux-func-prop} that $\hat{y}|_{\Omega^+} \in C^2(\overline{\Omega^+})$ and $\hat{y}|_{\Omega^-} \in C^2(\overline{\Omega^-})$.
Hence, the second-order derivatives of $\hat{y}$ are well-defined and continuous on $\Omega^-$ and $\Omega^+$.
In particular, we have that
\small
\begin{equation}\label{eq:neural-form-derivatives}
	\left\{
	\begin{aligned}
		& \frac{\partial \hat{y}}{\partial x_i} = \frac{\partial g}{\partial x_i} + h \left(\frac{\partial \cN_y}{\partial x_i} + \frac{\partial \cN_y}{\partial \phi} \frac{\partial \phi}{\partial x_i}\right) + \cN_y \frac{\partial h}{\partial x_i}, \\
		& \frac{\partial^2 \hat{y}}{\partial x_i^2} = \frac{\partial^2 g}{\partial x_i^2} + h\left( \frac{\partial^2 \cN_y}{\partial x_i^2} + 2 \frac{\partial^2 \cN_y}{\partial x_i \partial \phi} \frac{\partial \phi}{\partial x_i} + \frac{\partial^2 \cN_y}{\partial \phi^2} \left(\frac{\partial \phi}{\partial x_i}\right)^2 + \frac{\partial \cN_y}{\partial \phi} \frac{\partial^2 \phi}{\partial x_i^2} \right)\\
		&\qquad\qquad+ 2\frac{\partial h}{\partial x_i} \left(\frac{\partial \cN_y}{\partial x_i} + \frac{\partial \cN_y}{\partial \phi} \frac{\partial \phi}{\partial x_i}\right) + \cN_y \frac{\partial^2 h}{\partial x_i^2}.
	\end{aligned}
	\right.
\end{equation}

\normalsize

Similar to what we have done for the state variable $y$, we can also approximate $p$ by a neural network with the boundary and interface conditions in  \eqref{eq:adjoint} as hard constraints. To be concrete,
since the boundary and interface conditions for $p$  are homogeneous, we approximate it by
\begin{equation}\label{eq:neural-form-p}
	\hat{p}(x; \theta_p) = h(x) \cN_p(x, \phi(x); \theta_p),
\end{equation}
where $\cN_p(x, \phi(x); \theta_p)$ is a neural netowrk with smooth activation functions and parameterized by $\theta_p$, the functions $h$ and $\phi$ satisfy  \eqref{eq:h-func-prop} and  \eqref{eq:aux-func-prop}, respectively. In particular, the functions $h$ and $\phi$ for $\hat{p}$ can be the same as the ones for $\hat{y}$.
The derivatives of $\hat{p}$ can be calculated in the same ways as those in \eqref{eq:ij-gradient-comp}~and \eqref{eq:neural-form-derivatives}, that is
{\small
	\begin{equation}\label{eq:neural-form-p-derivatives}
		\left\{
		\begin{aligned}
			& \frac{\partial \hat{p}}{\partial x_i} = h \left(\frac{\partial \cN_p}{\partial x_i} + \frac{\partial \cN_p}{\partial \phi} \frac{\partial \phi}{\partial x_i}\right) + \cN_p \frac{\partial h}{\partial x_i}, \\
			& \frac{\partial^2 \hat{p}}{\partial x_i^2} = h \left( \frac{\partial^2 \cN_p}{\partial x_i^2} + 2 \frac{\partial^2 \cN_p}{\partial x_i \partial \phi} \frac{\partial \phi}{\partial x_i} + \frac{\partial^2 \cN_p}{\partial \phi^2} \left(\frac{\partial \phi}{\partial x_i}\right)^2 + \frac{\partial \cN_p}{\partial \phi} \frac{\partial^2 \phi}{\partial x_i^2} \right)  \\
			&\qquad\qquad+ 2\frac{\partial h}{\partial x_i} \left(\frac{\partial \cN_p}{\partial x_i} + \frac{\partial \cN_p}{\partial \phi} \frac{\partial \phi}{\partial x_i}\right) + \cN_p \frac{\partial^2 h}{\partial x_i^2}, \\
			& [\beta \partial_{\bn} \hat{p}]_\Gamma  = (\beta^+ - \beta^-)\left(\cN_p\bm{n} \cdot \nabla h + h (\bm{n} \cdot \nabla_x \cN_p)\right) + \frac{\partial \cN_p}{\partial \phi} \left(h [\beta \partial_{\bn} \phi]_\Gamma \right).
		\end{aligned}
		\right.
	\end{equation}
	
}

\subsection{The choice of \texorpdfstring{$\phi$}{auxiliary function}}\label{subsec:construct-aux-func}

We note that the abstract and general neural networks $\hat{y}(x; \theta_y)$ and $\hat{p}(x; \theta_p)$ given in  \eqref{eq:neural-form-hard-bcij} and  \eqref{eq:neural-form-p} can be used in practice only when the auxiliary function $\phi(x)$ satisfying  \eqref{eq:aux-func-prop} is chosen appropriately. In this subsection, we illustrate how to choose $\phi(x)$ for interfaces with different geometrical properties. In particular,  we shall show that, if the shapes of $\Omega^+, \Omega^-$, and $\Gamma$ are regular enough and their analytic expressions are known, then we can construct an auxiliary function $\phi(x)$ analytically.

First, if $\Gamma$ is the regular zero level set of a function  $\psi \in C^2(\overline{\Omega})$\footnote{The zero level set of $\psi$ is regular means that it does not contain any point where $\nabla \psi$ vanishes.}, then we can define $\phi(x)$ as follows:
\begin{equation*}
	\phi(x) = \begin{cases}
		\psi(x), & \text{~if~} x \in \Omega^-, \\
		0, & \text{~if~} x \in \Omega^+ \cup \Gamma \cup \partial \Omega,
	\end{cases}
\end{equation*}
which is smooth and clearly satisfies \eqref{eq:aux-func-prop}.
We present an example below for further explanations.
\begin{example}\label{ex:aux-func-ex-1}
	(Circle-shaped interfaces)
	Consider a domain $\Omega\subset\mathbb{R}^{d}$ and the interface $\Gamma\subset\Omega$ is given by the circle $\Gamma = \{x \in \Omega: \lVert x \rVert_2 = r_0 \}$, with $r_0>0$.
	The domain $\Omega$ is divided into two parts $\Omega^- = \{x \in\Omega: \lVert x \rVert_2 < r_0 \}$ and $\Omega^+ = \{x \in \Omega: \lVert x \rVert_2 > r_0 \}$.
	In this case, the interface $\Gamma$ is the regular zero level set of $\psi(x)= r_0^2 - \lVert x \rVert_2^2$ and the auxiliary function $\phi$ can be defined as
	\begin{equation*}
		\phi(x) = \begin{cases}
			r_0^2 - \lVert x \rVert_2^2, & \text{~if~} x \in \Omega^- \\
			0, & \text{~if~} x \in \Omega^+ \cup \Gamma \cup \partial \Omega.
		\end{cases}
	\end{equation*}
\end{example}

The above idea can be easily extended to the case where $\Gamma$ is a finite union of the regular zero level sets of some functions $\psi_1, \ldots, \psi_n \in C^2(\overline{\Omega^-}) (n\geq 1)$.
See \Cref{ex:aux-func-ex-2} for a concrete explanation.

\begin{example}\label{ex:aux-func-ex-2}
	(Box-shaped interfaces)
	Consider a domain $\Omega\subset\mathbb{R}^d$ containing the box $B:= [a_1, b_1] \times \cdots \times [a_d, b_d] \in \bR^d$. The interface $\Gamma := \partial B$ divides $\Omega$ into $\Omega^- = (a_1, b_1) \times \cdots \times (a_d, b_d)$ and $\Omega^+ = \Omega \backslash B$.
	Here, the sub-domain $\Omega^-$ is the intersection of $2d$ half-spaces, whose corresponding hyperplanes are the zero level sets of $\psi_i(x) = x_i - a_i, ~ i = 1, \ldots, d$ and $\psi_i(x) = b_{i-d} - x_{i-d}, ~ i = d+1, \ldots, 2d$, respectively.
	When each $\psi_i$ is treated as a function defined on $\Omega^-$, we can see that $\psi_i \in C^2(\overline{\Omega^-})$ and $\Gamma$ is indeed characterized by the union of the regular zero level sets of $\psi_1, \ldots, \psi_{2d}$.
	In this case, we define
	\begin{equation}\label{eq:def_phi_1}
		\phi(x) = \begin{cases}
			\prod_{i=1}^d (x_i - a_i)(b_i - x_i), & \text{~if~} x \in \Omega^-, \\
			0, & \text{~if~} x \in \Omega^+ \cup \Gamma \cup \partial \Omega.
		\end{cases}
	\end{equation}
	This is a smooth function satisfying \eqref{eq:aux-func-prop}. In particular, the pairwise intersections of the zero level sets of $\{\psi_i(x)\}_{i=1}^{2d}$ have measure zero, and the regularity of these zero level sets ensures that $[\beta \partial_{\bm{n}} \phi]_\Gamma \neq 0$ almost everywhere.
\end{example}

Next, we consider a more general case, where $\Gamma$ is a finite union of the regular zero level sets of functions $\psi_1, \psi_2, \ldots, \psi_n$, which are of class $C^2$ only in an open neighborhood of $\Gamma$. Such a situation arises, for instance, when $\Gamma$ is defined by the zero level set of a function represented in polar coordinates since the angle parameter is not differentiable at the origin.
Due to the lack of global smoothness, we cannot simply define $\phi(x)$ as in \eqref{eq:def_phi_1}; otherwise, the resulting $\phi$ may not satisfy the assumption $\phi\in C^2(\Omega^-)$ in \eqref{eq:aux-func-prop}.  To tackle this issue,
we propose to set $\phi(x) = 0$ when $x \in \overline{\Omega^+}$, and  $\phi(x)$ to be a nonzero constant over the region inside $\Omega^-$ where $\psi_1, \ldots, \psi_n$ fail to be $C^2$ functions. Then, in the rest part of the domain, we define $\phi(x)$ by using $\psi_1, \ldots, \psi_n$ so that the resulting piecewise function $\phi(x)$ is well-defined and satisfies (\ref{eq:aux-func-prop}).   We shall elaborate on the above ideas in the remainder of this section and for this purpose, we make the following assumptions.

\begin{assumption}\label{ass:gamma-regularity-2}
	The sub-domain $\Omega^-$ is the intersection of the interior of finitely many oriented, smooth, and embedded manifolds $M_1, M_2, \ldots, M_n$, where $M_i \cap M_j$ is of measure zero whenever $i \neq j$ and $i, j \in \{1, \ldots, n\}$.
\end{assumption}
\begin{assumption}\label{ass:gamma-regularity-3}
	There exists an open neighborhood  $U \subset \bR^d$ of $\Gamma$, such that for each $i \in \{1, \ldots, n\}$ and manifold $M_i$, there exists functions $\psi_i : U \to \bR$ satisfying $\psi_i \in C^2(\overline{U})$ and
$$
		\psi_i(x) = 0 \text{~if~} x \in M_i \cap \Gamma, \quad \psi_i(x) > 0 \text{~if~} x \in U \cap \Omega^-, \quad \partial_{\bn} \psi_i \neq 0 \text{~on~} M_i \cap \Gamma.
$$
\end{assumption}
\begin{assumption}\label{ass:gamma-regularity-4}
	There exist positive constants $c_1, \ldots, c_n$ such that $\psi_i(x) > c_i$ for all $x \in \partial U \cap \overline{\Omega^-}$ and for all $i \in \{1, \ldots, n\}$.
\end{assumption}
Under  Assumptions \ref{ass:gamma-regularity-2}-\ref{ass:gamma-regularity-4}, we have the following result.

\begin{theorem}\label{thm:aux-func-construction}
	Suppose  Assumptions \ref{ass:gamma-regularity-2}-\ref{ass:gamma-regularity-4} hold and we define  $\psi: U \to \bR$ as
	$
	\psi(x) = \prod_{i=1}^n \psi_i(x).
	$
	For any constant $c$ such that $0 < c < \prod_{i=1}^n c_i$, let
	$L_c := \{ x \in U: \psi(x) \geq c \}. $
	Then the function $\phi: \bar{\Omega} \to \bR$ given by
	\begin{equation}\label{eq:def_phi}
		\phi(x) = \begin{cases}
			c^3, & \text{~if~} x \in (\overline{\Omega^-} \backslash U) \cup (\overline{\Omega^-} \cap L_c), \\
			{c^3 - }\left( c - \psi(x) \right)^3, & \text{~if~} x \in (U \cap \overline{\Omega^-}) \backslash L_c, \\
			0, & \text{~if~} x \in \overline{\Omega^+}
		\end{cases}
	\end{equation}
	is well-defined and satisfies  \eqref{eq:aux-func-prop}.
\end{theorem}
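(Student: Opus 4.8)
The plan is to check, in order, that \eqref{eq:def_phi} defines $\phi$ unambiguously and then that it satisfies each of the five requirements in \eqref{eq:aux-func-prop}. For unambiguity I would note that the three regions in \eqref{eq:def_phi} cover $\overline{\Omega}$ (the first two partition $\overline{\Omega^-}$ since $L_c\subset U$, and the third is $\overline{\Omega^+}$) and that the formulas agree on overlaps: the level surface $\{\psi=c\}$ separates the first two prescriptions and both equal $c^3$ there, while on $\Gamma=\overline{\Omega^-}\cap\overline{\Omega^+}$ we have $\psi=0<c$ — because $\Gamma\subset\bigcup_iM_i$ (from $\Omega^-=\bigcap_i\mathrm{int}(M_i)$ in Assumption~\ref{ass:gamma-regularity-2}) and each $\psi_i$ vanishes on $M_i\cap\Gamma$ by Assumption~\ref{ass:gamma-regularity-3} — so $\Gamma\cap L_c=\emptyset$ and the second prescription gives $c^3-(c-0)^3=0$, matching the third. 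Continuity then needs to be checked only at the seams $\{\psi=c\}\cap U\cap\overline{\Omega^-}$ (immediate, both sides $=c^3$), at $\partial U\cap\overline{\Omega^-}$, and at $\Gamma$; at the second, Assumption~\ref{ass:gamma-regularity-4} gives $\psi=\prod_i\psi_i>\prod_ic_i>c$, so by continuity of $\psi$ on $\overline U$ the function $\phi$ is identically $c^3$ on a full neighborhood of that seam in $\overline{\Omega^-}$; at $\Gamma$, $c^3-(c-\psi)^3\to0$ as $\psi\to0$. This yields $\phi\in C(\overline\Omega)$, hence $[\phi]_\Gamma=0$, and the side $\phi|_{\overline{\Omega^+}}\equiv0$ is trivially $C^2$.

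For $\phi|_{\overline{\Omega^-}}\in C^2(\overline{\Omega^-})$ I would write $\phi=\Phi\circ\psi$ on $U\cap\overline{\Omega^-}$, where $\Phi(t)=c^3$ for $t\ge c$ and $\Phi(t)=c^3-(c-t)^3$ for $t<c$, and verify $\Phi\in C^2(\bR)$: at $t=c$ both branches give value $c^3$, first derivative $3(c-t)^2|_{t=c}=0$, and second derivative $-6(c-t)|_{t=c}=0$, so $\Phi,\Phi',\Phi''$ match (the cube is exactly the lowest power producing a $C^2$ join, and $\Phi'(0)=3c^2\neq0$, which will matter below). Since $\psi=\prod_i\psi_i\in C^2(\overline U)$ by Assumption~\ref{ass:gamma-regularity-3}, $\Phi\circ\psi\in C^2(U\cap\overline{\Omega^-})$. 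Away from $\overline U$, $\phi\equiv c^3$ is $C^\infty$, and by the neighborhood argument above $\phi\equiv c^3$ near $\partial U\cap\overline{\Omega^-}$ as well; a pointwise check splitting $x_0\in\overline{\Omega^-}$ into the cases $x_0\in U$, $x_0\notin\overline U$, $x_0\in\partial U\cap\overline{\Omega^-}$ then gives $C^2$ smoothness in a neighborhood of every point of $\overline{\Omega^-}$.

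For $[\beta\partial_{\bn}\phi]_\Gamma\neq0$ a.e.\ on $\Gamma$, since $\phi\equiv0$ on $\overline{\Omega^+}$ the $\Omega^+$-side normal derivative vanishes, so $[\beta\partial_{\bn}\phi]_\Gamma=-\beta^-\,\bn\cdot\nabla\phi|_{\Omega^-}$. In a neighborhood of $\Gamma$ inside $\overline{\Omega^-}$ one has $\psi<c$, hence $\nabla\phi=3(c-\psi)^2\nabla\psi$, which on $\Gamma$ equals $3c^2\nabla\psi$. By Assumption~\ref{ass:gamma-regularity-2} the ``corner'' set $\bigcup_{i\neq j}(M_i\cap M_j)$ has measure zero, so for a.e.\ $x\in\Gamma$ there is exactly one index $i=i(x)$ with $\psi_{i(x)}(x)=0$ and $\psi_j(x)>0$ for $j\neq i(x)$; differentiating $\psi=\prod_i\psi_i$ then gives $\nabla\psi(x)=\bigl(\prod_{j\neq i(x)}\psi_j(x)\bigr)\nabla\psi_{i(x)}(x)$ with a strictly positive scalar factor, and $\bn\cdot\nabla\psi_{i(x)}(x)=\partial_{\bn}\psi_{i(x)}(x)\neq0$ by Assumption~\ref{ass:gamma-regularity-3}. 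Hence $[\beta\partial_{\bn}\phi]_\Gamma(x)=-3c^2\beta^-\bigl(\prod_{j\neq i(x)}\psi_j(x)\bigr)\partial_{\bn}\psi_{i(x)}(x)\neq0$ for a.e.\ $x\in\Gamma$, which completes the verification.

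I expect the delicate points to be the $C^2$ gluing of $\phi$ along the internal level set $\{\psi=c\}$ and, simultaneously, up to the interface $\Gamma$ and across $\partial U\cap\overline{\Omega^-}$ — i.e.\ turning the local, pointwise argument into an airtight global statement — together with the measure-theoretic step isolating a single active constraint $\psi_{i(x)}$ for a.e.\ $x\in\Gamma$; everything else is routine once $\Phi\in C^2(\bR)$ and the sign and positivity facts from Assumptions~\ref{ass:gamma-regularity-2}–\ref{ass:gamma-regularity-4} are in hand.
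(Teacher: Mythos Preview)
Your proposal is correct and follows essentially the same route as the paper: both verify well-definedness by checking consistency on the overlaps $\Gamma$ and $\partial L_c$ (using Assumption~\ref{ass:gamma-regularity-4} to confine the transition region away from $\partial U$), establish the required regularity piece by piece, and compute $[\beta\partial_{\bn}\phi]_\Gamma$ via the product rule on $\psi=\prod_i\psi_i$, invoking Assumption~\ref{ass:gamma-regularity-2} to discard the measure-zero corner set. Your introduction of the scalar function $\Phi(t)$ with $\Phi\in C^2(\bR)$ is a tidy way to package the $C^2$-gluing across $\{\psi=c\}$ that the paper handles more tersely by noting the first and second derivatives of $\phi$ vanish as one approaches $\partial L_c$ from $D_2$.
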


\begin{proof}
	Let
	$$D_1 := (\overline{\Omega^-} \backslash U) \cup (\overline{\Omega^-} \cap L_c), D_2 := (U \cap \overline{\Omega^-}) \backslash L_c, D_3 :=\overline{\Omega^+},$$
	then we shall show that $\phi$ is a well-defined piecewise function, i.e. $\bigcup_{i=1}^3D_i=\overline{\Omega}$ and $\phi$ has consistent function values over $\overline{D_1} \cap \overline{D_2}$, $\overline{D_2} \cap \overline{D_3}$, and $\overline{D_3} \cap \overline{D_1}$.
	
	First, observe that $D_1 \cup D_2 = \overline{\Omega^-}$, so $D_1 \cup D_2 \cup D_3 = \overline{\Omega}$.
	Moreover, $D_1$ and $D_3$ are closed in $\bR^d$.
	We next prove that (1) $\overline{D_1} \cap \overline{D_3} = \varnothing$, (2) $\overline{D_2} \cap \overline{D_3} \subset \Gamma$ and (3) $\overline{D_1} \cap \overline{D_2} \subset \partial L_c$.
	
	\begin{enumerate}
		\item
		Since $\Omega^-$ is a subdomain of $\Omega$, it is open and connected. We thus have $\overline{D_1} = D_1 \subset \overline{\Omega^-}$ and $\overline{D_3} = D_3 \subset \overline{\Omega^+}$ that $\overline{D_1} \cap \overline{D_3} \subset \Gamma$.
		Then, since  $\Gamma \subset U$ and $\Gamma \cap L_c = \varnothing$, we have $\overline{D_1} \cap \Gamma = D_1 \cap \Gamma = \varnothing$. Hence $\overline{D_1} \cap \overline{D_3} = \varnothing$.
		
		\item
		Note that
		\begin{equation}\label{eq:D2-decomp}
			\overline{D_2} \subset \overline{U} \cap \overline{\Omega^-} \cap \overline{\bR^d \backslash L_c} \subset \overline{\Omega^-}
		\end{equation}
		and $\overline{D_3} \subset \overline{\Omega^+}$, we have $\overline{D_2} \cap \overline{D_3} \subset \Gamma$.
		
		\item
		Denote $c_0 := \prod_{i=1}^n c_i$.
		If $x \in \overline{D_1} \cap \overline{D_2} = D_1 \cap \overline{D_2}$, then by the decomposition of $\overline{D_2}$ in \eqref{eq:D2-decomp}, we have $x \in \overline{U} \backslash U \subset \partial U$ or $x \in \overline{(\bR^d \backslash L_c)} \cap L_c \subset \partial L_c$.
		If $x \in \partial U$, by \Cref{ass:gamma-regularity-4} we have $\psi(x) > c_0$.
		But $x \in \overline{D_2}$ implies that $x \in \overline{\bR^d \backslash L_c}$, i.e. $\psi(x) \leq c < c_0$, contradicting to $\psi(x) > c_0$.
		We thus have that $x \in \partial L_c$.
	\end{enumerate}

	By the above claims (1)-(3), we have
	\[ \begin{aligned}
		x \in \overline{D_1} \cap \overline{D_2} \implies x \in \partial L_c \implies \psi(x) = c \implies c^3 = c^3 - (c - \psi(x))^3, \\
		x \in \overline{D_2} \cap \overline{D_3} \implies x \in \Gamma \implies \psi(x) = 0 \implies c^3 - (c - \psi(x))^3 = 0.
	\end{aligned}  \]
	Moreover, $\overline{D_1} \cap \overline{D_3} = \varnothing$.
	Therefore, the piecewise-definition of $\phi$ is consistent, and hence $\phi$ is well-defined.
	
	Since $\phi$ is smooth in $D_1, D_2, D_3$ and clearly continuous on $\Gamma$ and $\partial L_c$, we have $\phi|_{\Omega^+} \in C^2(\overline{\Omega^+})$ and $\phi \in C(\overline{\Omega})$.
	The first and second order derivatives of $\phi(x)$ all tends to zero as $x$ approaches $\partial L_c$ in $D_2$, so $\phi|_{\Omega^-} \in C^2(\overline{\Omega^-})$.
	
	By \Cref{ass:gamma-regularity-3} and  \eqref{eq:def_phi}, it is clear that $\phi(x) = 0$ for all $x \in \Gamma$ and $\phi(x) > 0$ on $\Omega^-$.
	Finally, we have
	\[ [\beta \partial_{\bn} \phi]_\Gamma(x) = -3 \beta^- (c - \psi(x))^2 \left( \sum_{i=1}^n (\partial_{\bn} \psi_i)(x) \prod_{j=1, j \neq i}^n \psi_j(x) \right), \quad \forall x \in \Gamma.  \]
	Since $c > 0$, by \Cref{ass:gamma-regularity-3}, $[\beta \partial_{\bn} \phi]_\Gamma(x)$ equals to $0$ if and only if there exists at least two distinct indexes $i, j \in \{1,\ldots,n\}$ such that $\psi_i(x) = \psi_j(x) = 0$.
	However, it follows from \Cref{ass:gamma-regularity-2} that $M_i \cap M_j$ is of measure zero for all $i, j$ with $i\neq j$, so the set $\{x \in \Gamma :  \psi_i(x) = \psi_j(x) = 0\}$ is also of measure zero.
	This shows that $[\beta \partial_{\bn} \phi]_\Gamma \neq 0$ a.e. on $\Gamma$.
\end{proof}

\Cref{thm:aux-func-construction} provides a generic method for constructing an auxiliary function $\phi(x)$ when Assumptions \ref{ass:gamma-regularity-2}-\ref{ass:gamma-regularity-4} are satisfied.
This method is independent of the PDE and is only related to the shape of the interface $\Gamma$, and can be easily applied to other types of  interface problems with slight modifications.

Note that although Assumptions \ref{ass:gamma-regularity-2}-\ref{ass:gamma-regularity-4} look complicated, they can be satisfied by a large class of  interfaces, which are of great practical interest.
We present an example in  \Cref{ex:aux-func-ex-3} below for explanations.
Note that the interfaces in \Cref{ex:aux-func-ex-3} have complex geometry and are challenging to be addressed by traditional mesh-based numerical methods.

\begin{example}\label{ex:aux-func-ex-3}
	(Star-shaped interfaces)
	Let $\Omega\subset\mathbb{R}^2$ be a bounded domain and the star-shaped interface $\Gamma\subset \Omega$ be defined  by the zero level set of the following function in polar coordinates:
	$\psi(r, \theta) = r - a - b \sin(5\theta)$
	with constants $b < a$.
	The domain $\Omega$ is divided into $\Omega^- = \{ (r, \theta) \in \bR^2 : r < a + b \sin(5\theta)\}$ and $\Omega^+ = \{ (r, \theta) \in \Omega : r > a + b \sin(5\theta) \}$.
	Note that $\psi(r, \theta)$ is not differentiable on $\Omega$, since the polar angle is not differentiable at the origin.
	In this case,  it follows from \Cref{thm:aux-func-construction} that we can define
	\small
	\begin{equation*}
		\phi(r, \theta) = \begin{cases}
			\left(\frac{a-b}{2} \right)^3, & \text{~if~} a + b \sin(5\theta) - r \geq \frac{a-b}{2}, \\
			{\left(\frac{a-b}{2} \right)^3 - \left(\frac{a-b}{2} + \psi(r, \theta) \right)^3}, & \text{~if~} 0 < a + b \sin(5\theta) - r < \frac{a-b}{2}, \\
			0, & \text{~otherwise},
		\end{cases}
	\end{equation*}
     \normalsize
	and one can check that $\phi$ satisfies \eqref{eq:aux-func-prop}.
\end{example}

\begin{remark}
	We mention that, if it is difficult to construct an auxiliary function $\phi(x)$ with an analytic form, we can   train a DCSNN to represent $\phi(x)$.  To this end, we impose the constraints
	$ [\phi]_\Gamma = 0, \quad [\beta \partial_{\bn} \phi]_\Gamma = \gamma, $
	where the function $\gamma: \Gamma \to \bR$ is nonzero almost everywhere.
	Then,  we train a DCSNN $\hat{\phi}(x, z; \theta_\phi)$ with smooth activation functions by minimizing the following loss function:
	\small
	\begin{equation}\label{eq:loss_phi}
		\begin{aligned}
			& \frac{w_1}{M_{\Gamma}}\sum_{i=1}^{M_\Gamma} \left|\hat{\phi}(x_\Gamma^i, 1; \theta_{\phi}) - \hat{\phi}(x_\Gamma^i, -1; \theta_{\phi})\right|^2 \\
			& \quad + \frac{w_2}{M_{\Gamma}} \sum_{i=1}^{M_\Gamma} \left| \bn \cdot (\beta^+ \nabla_x \hat{\phi}(x_i^{\Gamma}, 1; \theta_{\phi}) - \beta^- \nabla_x \hat{\phi}(x_i^{\Gamma}, -1; \theta_{\phi})) - \gamma (x_{\Gamma}^i) \right|^2,
		\end{aligned}
	\end{equation}
	\normalsize
	where  $\{x_\Gamma^i\}_{i=1}^{M_\Gamma} \subset \Gamma$ and $w_1,w_2>0$ are the weights.
	It is clear that the trained $\hat{\phi}$ satisfies the smoothness requirements in \eqref{eq:aux-func-prop}.
\end{remark}

\subsection{The hard-constraint PINN method for (\ref{eq:ocip-elliptic})-(\ref{eq:control-constraint-elliptic})}\label{subsec:pinn-hc}

In this subsection, we propose a hard-constraint PINN method for problem \eqref{eq:ocip-elliptic}-\eqref{eq:control-constraint-elliptic} based on the discussions in \Cref{subsec:hc-bc,subsec:construct-aux-func}. For this purpose, we first approximate the state variable $y$ and the adjoint variable $p$ by the neural networks $\hat{y}(x;\theta_y)$ and $\hat{p}(x;\theta_p)$ given in  \eqref{eq:neural-form-hard-bcij} and  \eqref{eq:neural-form-p}, respectively. As a result, the boundary and interface conditions  in  \eqref{eq:adjoint} and  \eqref{eq:state_reduced} are satisfied automatically.
Then,  equations  \eqref{eq:adjoint} and  \eqref{eq:state_reduced} can be solved by minimizing the following loss function:
\begin{equation}\label{eq:loss-elliptic-hc}
	{\small
		\begin{aligned}
			\cL_{HC}(\theta_y, \theta_p) = & \frac{w_{y, r}}{M} \sum_{i=1}^{M} \left| -\Delta_x \hat{y}(x^i; \theta_{y})-\frac{\mathcal{P}_{[u_a(x^i), u_b(x^i)]}(-\frac{1}{\alpha} \hat{p}(x^i; \theta_p)) + f(x^i)}{\beta^\pm} \right| ^2 \\
			& + \frac{w_{y, \Gamma_n}}{M_{\Gamma}} \sum_{i=1}^{M_\Gamma} \left| [\beta\partial_{\bm{n}}\hat{y}]_\Gamma (x_{\Gamma}^i; \theta_{y}) - g_1(x_\Gamma^i) \right|^2 \\
			& + \frac{w_{p, r}}{M} \sum_{i=1}^{M} \left|-\Delta_x \hat{p}(x_i; \theta_{p})-\frac{\hat{y}(x_i; \theta_{y}) - y_d(x_i)}{\beta^\pm} \right|^2  + \frac{w_{p, \Gamma_n}}{M_{\Gamma}}\sum_{i=1}^{M_\Gamma} \left|[\beta\partial_{\bm{n}}\hat{p}]_\Gamma(x_i^{\Gamma}; \theta_{p}) \right|^2.
		\end{aligned}
	}
\end{equation}
Clearly, the loss function $\cL_{HC}$ can be calculated by \eqref{eq:neural-form-hard-bcij} and \eqref{eq:ij-gradient-comp}-\eqref{eq:neural-form-p-derivatives}.  Similar to  \eqref{eq:loss-elliptic}, the loss function $ \mathcal{L}_{HC}(\theta_y, \theta_p) $ in  \eqref{eq:loss-elliptic-hc} can be minimized by a stochastic optimization method,  where all the derivatives $\Delta_x \hat{y}$, $\nabla_x\hat{y}$, $\Delta_x \hat{p}$, $\nabla_x\hat{p}$ and the gradients $\frac{\partial \mathcal{L}_{HC}}{\partial \theta_{y}}$, $\frac{\partial \mathcal{L}_{HC}}{\partial \theta_{p}}$ are computed by automatic differentiation.

We remark that if $g, h$, and $\phi$ cannot be constructed with analytical forms, then we can represent them by training three neural networks $\hat{g}$, $\hat{h}$, and $\hat{\phi}$ as shown in \eqref{eq:loss_g}, \eqref{eq:loss_h}, and \eqref{eq:loss_phi}.  Note that these neural networks are expected to be easy to train due to the simple structures of the loss functions \eqref{eq:loss_g}, \eqref{eq:loss_h}, and \eqref{eq:loss_phi}. More importantly, with the pre-trained $\hat{g}$, $\hat{h}$, and $\hat{\phi}$,  the boundary and interface conditions is decoupled from the learning of the PDE. Hence, this hard-constraint approach is still superior since it can reduce the training difficulty and improve the numerical accuracy of \Cref{alg:pinn}.

We summarize the proposed hard-constraint PINN method for \eqref{eq:ocip-elliptic}-\eqref{eq:control-constraint-elliptic} in \Cref{alg:pinn-hc}. 
We provide two options for inputting the functions $g$, $h$, and $\phi$ in \eqref{eq:neural-form-hard-bcij} and \eqref{eq:neural-form-p}: In Option I, the input functions $g$, $h$, and $\phi$ have closed-form expressions, while in Option II, they are numerically approximated by pretrained neural networks.
We reiterate that  the boundary and interface conditions for $y$ and $p$ are imposed as hard constraints in the neural networks $\hat{y}(x; \theta_y)$ and $\hat{p}(x; \theta_p)$. Hence, compared with \Cref{alg:pinn},  \Cref{alg:pinn-hc} reduces the numerical error at the boundary and interface and is easier and cheaper to implement.

\begin{algorithm}[ht]
	\caption{The hard-constraint PINN method for \eqref{eq:ocip-elliptic}-\eqref{eq:control-constraint-elliptic}}
	\begin{algorithmic}[1]
		{\renewcommand{\algorithmicrequire}{\textbf{Input (Option I):}}
		\REQUIRE Weights $w_{y, r}, w_{p, \Gamma_n}, w_{p, r}, w_{p, \Gamma_n}$, functions $g$, $h$, and $\phi$ with closed-form expressions and satisfying \eqref{eq:g-func-prop}, \eqref{eq:h-func-prop}, \eqref{eq:aux-func-prop}, respectively.}
		{\renewcommand{\algorithmicrequire}{\textbf{Input (Option II):}}
		\REQUIRE Weights $w_{y, r}, w_{p, \Gamma_n}, w_{p, r}, w_{p, \Gamma_n}$, pretrained neural networks $\hat{g}$, $\hat{h}$, and $\hat{\phi}$ that approximate any functions satisfying \eqref{eq:g-func-prop}, \eqref{eq:h-func-prop}, \eqref{eq:aux-func-prop}, respectively.}
		\vspace{-1em}
		\STATE Initialize the neural networks $\cN_y(x, \phi(x); \theta_y)$ and $\cN_p(x, \phi(x); \theta_p)$ with parameters $\theta_y^0$ and $\theta_p^0$.
		\STATE Sample training sets $\cT = \{x^i\}_{i=1}^M \subset \Omega$ and $\cT_\Gamma = \{x_\Gamma^i\}_{i=1}^{M_\Gamma} \subset \Gamma$.
		\STATE Calculate the function values of $g, h, \phi$ and their first and second order derivatives over $\cT$, and the values of $[\beta \partial_{\bn} g]_\Gamma, [\beta \partial_{\bn} \phi]_\Gamma$ over $\cT_\Gamma$.
		\STATE Train the neural networks $\hat{y}(x; \theta_y)$ and $\hat{p}(x; \theta_p)$ to identify the optimal parameters $\theta_y^*$ and $\theta_p^*$ by minimizing \eqref{eq:loss-elliptic-hc}.
		\STATE$\hat{u}(x) \gets \min\{u_b(x), \max\{u_a(x), -\frac{1}{\alpha} \hat{p}(x;\theta_p^*)\}\}$.\\
    \ENSURE Approximate solutions $\hat{y}(x;\theta^*_y)$ and $\hat{u}(x)$.
	\end{algorithmic}
	\label{alg:pinn-hc}
\end{algorithm}

\section{Numerical results}\label{sec:ocip-elliptic-numexp}

In this section, we report some numerical results of  \Cref{alg:pinn,alg:pinn-hc} for solving problem (\ref{eq:ocip-elliptic})-(\ref{eq:control-constraint-elliptic}) and numerically verify the superiority of   \Cref{alg:pinn-hc} to  \Cref{alg:pinn}.  All the codes of our numerical experiments were written with PyTorch (version 1.13) and are available at \url{https://github.com/tianyouzeng/PINNs-interface-optimal-control}. In particular, we use the hyperbolic tangent function as the active functions in all the neural networks.

To test the accuracy of the results computed by \Cref{alg:pinn,alg:pinn-hc}, we select $256 \times 256$ testing points $\{x^i\}_{i=1}^{M_T} \subset \Omega$ following the Latin hypercube sampling \cite{mckay2000comparison}. We then compute
\begin{equation}\label{eq:abs-rel-error-def}
	\varepsilon_{\text{abs}} = \sqrt{\frac{1}{M_T} \sum_{i=1}^{M_T} (\hat{u}(x^i) - u^*(x^i))^2},~\text{and}~ \varepsilon_{rel} = \varepsilon_{abs} \sqrt{A(\Omega)} / ||u^*||_{L^2(\Omega)}
\end{equation}
as the absolute and relative  $L^2$-errors of $\hat{u}$, where $A(\Omega)$ is the area of $\Omega$ (i.e. the Lebesgue measure of $\Omega$), and $||u^*||_{L^2(\Omega)}$ is computed using the numerical integration function \texttt{dblquad} implemented in the SciPy library of Python.

\medskip

\noindent\textbf{Example 1.}\exmplabel{ex:elliptic-reg-cc} 
We first demonstrate an example of (\ref{eq:ocip-elliptic})-(\ref{eq:control-constraint-elliptic}), where $ U_{ad} = \{u \in L^2(\Omega): -1 \leq u \leq 1 \text{~a.e.~in~} \Omega\}$.
	We set $\Omega = (-1, 1) \times (-1, 1)$, $\Gamma = \{x \in \Omega: \lVert x \rVert_2 \leq r_0\} \subset \Omega$ with $r_0 = 0.5$,  $\alpha = 1$, $\beta^- = 1$ and $\beta^+ = 10$. 
	We further choose $g_0=0$, $g_1=0$ and $h_0= \dfrac{(x_1^2+x_2^2)^{3/2}}{\beta^+} + (\dfrac{1}{\beta^-} - \dfrac{1}{\beta^+}) r_0^3$. Following \cite{zhang2015immersed}, we let
	\begin{equation}\label{eq:exact_elliptic_p}
		p^*(x_1, x_2) = \begin{cases}
			-{5(x_1^2 + x_2^2 - r_0^2)(x_1^2 - 1)(x_2^2-1)}/{\beta^-} ~ \text{in} ~ \Omega^-, \\
			-{5(x_1^2 + x_2^2 - r_0^2)(x_1^2 - 1)(x_2^2-1)}/{\beta^+} ~ \text{in} ~ \Omega^+,
		\end{cases}
	\end{equation}
	\begin{equation}\label{eq:exact_elliptic_y}
		y^*(x_1, x_2) = \begin{cases}
			{(x_1^2+x_2^2)^{3/2}}/{\beta^-}, ~ \text{in} ~ \Omega^-, \\
			{(x_1^2+x_2^2)^{3/2}}/{\beta^+} + ({1}/{\beta^-} - {1}/{\beta^+}) r_0^3, ~ \text{in} ~ \Omega^+,
		\end{cases}
	\end{equation}
	\begin{equation}\label{eq:exact_elliptic_u}
	u^*(x_1, x_2) = \max\{-1, \min\{1, -\frac{1}{\alpha} p^*(x_1, x_2)\}\}.
	\end{equation}
	and
	\begin{equation*}
		\begin{aligned}
			& f(x_1, x_2) = -u^*(x_1, x_2) - \nabla \cdot (\nabla \beta^\pm y^*(x_1, x_2)) ~\text{in}~\Omega^\pm,\\
			& y_d(x_1, x_2) = y^*(x_1, x_2) + \nabla \cdot (\nabla \beta^\pm p^*(x_1, x_2)) ~\text{in}~\Omega^\pm.
		\end{aligned}
	\end{equation*}
	Then, it is easy to verify that $(u^*,y^*)^\top$ is the unique solution of this example.

	To implement the soft-constraint PINN method in  \Cref{alg:pinn}, we approximate $y$ and $p$ respectively by two fully connected neural networks $\hat{y}(x, z; \theta_y)$ and $\hat{p}(x, z; \theta_p)$, where $z$ is the augmented coordinate variable in the DCSNN (see Section 2.2).
	All the neural networks consist of only one hidden layer with $100$ neurons and \texttt{tanh} activation functions.
	To implement the hard-constraint PINN method in   \Cref{alg:pinn-hc}, we define two neural networks $\cN_y(x, \phi(x); \theta_y)$ and $\cN_p(x, \phi(x); \theta_p)$ with the same structures as those of $\hat{y}$ and $\hat{p}$.
	Then the state and adjoint variables are approximated by the neural networks given in \eqref{eq:neural-form-hard-bcij} and  \eqref{eq:neural-form-p}.
	We consider both input Options I and II in \Cref{alg:pinn-hc}. 
	For Option I, we choose $g(x) = \frac{(x_1^2 x_2^2 + 1)^{3/2}} {\beta^+} + (\frac{1}{\beta^-} - \frac{1}{\beta^+}) r_0^3$ and $h(x) = (x_1^2 - 1)(x_2^2 - 1)$.
	The auxiliary function $\phi$ is defined analytically by
	\begin{equation}\label{eq:def_phi_ex}
		\phi(x) = \begin{cases}
			4 \lVert x \rVert _2^2, & \text{~if~} x \in \Omega^-, \\
			1, & \text{~otherwise}.
		\end{cases}
	\end{equation}
	It is easy to check that these functions satisfy the requirements in \eqref{eq:g-func-prop}, \eqref{eq:h-func-prop} and \eqref{eq:aux-func-prop}.
	For Option II, we first randomly sample the training sets $\mathcal{T}=\{x^i\}_{i=1}^{M} \subset \Omega$, $\mathcal{T}_B=\{x_B^i\}_{i=1}^{M_B} \subset \partial \Omega$ and $\mathcal{T}_{\Gamma}=\{x_\Gamma^i\}_{i=1}^{M_\Gamma} \subset \Gamma$ with $M = 1024$ and $M_B = M_\Gamma = 256$.
	Then we train two fully connected shallow neural networks\footnote{\blue{Here, due to the homogeneous interface condition $g_0 = 0$, it suffices to approximate $\hat{g}$ by a fully connected neural network, rather than by a DSCNN as described in the previous section.}} $\hat{g}(x; \theta_g)$ and $\hat{h}(x; \theta_h)$ with one hidden layer, $500$ neurons and $\texttt{tanh}$ activation functions by minimizing the loss functions
	\begin{equation}\label{eq:loss_g_ex1}
		\cL_g(\theta_g) = \frac{1}{M_B} \sum_{i=1}^{M_B} \left| \hat{g}(x_B^i; \theta_g) - h_0(x_B^i) \right|^2,
	\end{equation}
	and
	\begin{equation}
		\cL_h(\theta_h) = \frac{0.01}{M} \sum_{i=1}^{M} \left| \hat{h}(x^i; \theta_h) - \bar{h}(x^i) \right|^2 + \frac{1}{M_B} \sum_{i=1}^{M_B} \left| \hat{h}(x_B^i; \theta_h) \right|^2,
	\end{equation}
	where $\bar{h}(x) := \cos(\frac{\pi}{2}x_1)\cos(\frac{\pi}{2}x_2)$.
	Each neural network is trained with the L-BFGS optimizer with stepsize $1$ and strong Wolfe condition for $200$ iterations.
	Then we train a DCSNN $\hat{\phi}(x, z; \theta_\phi)$ with one hidden layer, $200$ neurons and \texttt{tanh} activation functions by minimizing the following loss function
	\begin{equation}
		\begin{aligned}
			\cL_\phi(\theta_\phi) = & \frac{1}{M_\Gamma} \sum_{i=1}^{M_\Gamma} \left| \hat{\phi}(x_\Gamma^i, 1; \theta_\phi) - \hat{\phi}(x_\Gamma^i, -1; \theta_\phi) \right|^2 \\
			& + \frac{1}{M_\Gamma} \sum_{i=1}^{M_\Gamma} \left| \bn \cdot \nabla_x \hat{\phi}(x_\Gamma^i, 1; \theta_\phi) - 5 \right|^2 + \frac{1}{M_\Gamma} \sum_{i=1}^{M_\Gamma} \left| \bn \cdot \nabla_x \hat{\phi}(x_\Gamma^i, -1; \theta_\phi)  \right|^2.
		\end{aligned}
	\end{equation}
	The ADAM optimizer \cite{kingma2014adam} is applied to train $\hat{\phi}(x, z; \theta_\phi)$ for 30,000 iterations.
	The learning rate is initially set to be $5 \times 10^{-4}$ and finally reduces to $10^{-4}$ by a preset scheduler.
	It can be verified that the functions $\hat{g}$, $\hat{h}$ and $\hat{\phi}$ are the approximations of some functions $g$, $h$, and $\phi$ satisfying the requirements \eqref{eq:g-func-prop}, \eqref{eq:h-func-prop} and \eqref{eq:aux-func-prop}.
	
	To train the neural networks $\hat{y}(x; \theta_y)$ and $\hat{p}(x; \theta_p)$, we adopt the same training sets $\mathcal{T}$, $\mathcal{T}_B$ and $\mathcal{T}_\Gamma$ as sampled above.
	All the neural networks are initialized using the default initializer of PyTorch.
	The weights are tuned so that the magnitude of each term in the loss function \eqref{eq:loss-elliptic} or \eqref{eq:loss-elliptic-hc} is balanced. In particular,  by adjusting the weights carefully,
	we set  $w_{y, r} = w_{y, \Gamma} = w_{y, \Gamma_n} = w_{p, r} = w_{p, \Gamma} = w_{p, \Gamma_n} = 1, w_{y, b} = 2$ and $w_{p, b} = 10$ for \Cref{alg:pinn}, $w_{y, r} = 3$, $w_{y, \Gamma_n} = w_{p, r} = w_{p, \Gamma_n} = 1$ for \Cref{alg:pinn-hc} with Option I, and $w_{y, r} = w_{y, \Gamma_n} = w_{p, \Gamma_n} = 1$,$w_{p, r} = 3$ for \Cref{alg:pinn-hc} with Option II.
	
	The  ADAM optimizer is used to train the neural networks $\hat{y}(x; \theta_y)$ and $\hat{p}(x; \theta_p)$ in \Cref{alg:pinn,alg:pinn-hc}.  We fix the number of ADAM iterations to 60,000 for \Cref{alg:pinn} and \Cref{alg:pinn-hc} with Option II, and to 40,000 for \Cref{alg:pinn-hc} with Option I.
	For \Cref{alg:pinn}, the learning rate is initialized to be $ 10^{-2}$ and is reduced to $5\times 10^{-4}$ during the training by a preset scheduler.
	For Option I of \Cref{alg:pinn-hc}, the learning rate is initially set to be $ 10^{-3}$ and finally reduces to $3\times 10^{-5}$.
	For Option II of \Cref{alg:pinn-hc}, the learning rate is initially set to be $ 5\times 10^{-3}$ and finally reduces to $3 \times 10^{-4}$.

	The numerical results for Algorithms \ref{alg:pinn} and  \ref{alg:pinn-hc} are shown in \Cref{fig:ex2-pinn-training-results,fig:ex2-pinnhcnn-training-results,fig:ex2-pinnhc-training-results}, respectively. 
	All the figures are plotted over a $200 \times 200$ uniform grid in $\overline{\Omega}$. 
	The $L^2$-errors of the computed control are summarized in \Cref{tab:ex2-computed-u-error}, where the results imply that the controls computed by \Cref{alg:pinn-hc} are far more accurate than the one by \Cref{alg:pinn}.

	Moreover, it can be seen from \Cref{fig:ex2-pinn-training-results,fig:ex2-pinnhcnn-training-results,fig:ex2-pinnhc-training-results} that, for  \Cref{alg:pinn}, the numerical errors are mainly accumulated on $\partial \Omega$ and $\Gamma$, and this issue is significantly alleviated in  \Cref{alg:pinn-hc} with both Options I and II. 
	We can also see that \Cref{alg:pinn,alg:pinn-hc} are effective in dealing with the control constraint $u\in U_{ad}$. All these results validate the advantage of   \Cref{alg:pinn-hc} over  \Cref{alg:pinn} and the necessity of imposing the boundary conditions and interface conditions as hard constraints.

	\begin{table}[!ht]
		\centering
		\footnotesize
		\caption{The $L^2$-errors of the computed control $u$ of \Cref{ex:elliptic-reg-cc} for \Cref{alg:pinn,alg:pinn-hc}.}
		\begin{tabular}{c c c} 
			\toprule
			~ & $\varepsilon_{\text{abs}}$ & $\varepsilon_{\text{rel}}$ \\
			\midrule
			\Cref{alg:pinn} & $6.6853 \times 10^{-4}$ & $2.3559 \times 10^{-3}$ \\
			\Cref{alg:pinn-hc} with Option I & $6.7087 \times 10^{-5}$ & $2.3062 \times 10^{-4}$ \\
			\Cref{alg:pinn-hc} with Option II & $2.0921 \times 10^{-4}$ & $7.1919 \times 10^{-4}$ \\
			\bottomrule
		 \end{tabular}
		\normalsize
		\label{tab:ex2-computed-u-error}
	\end{table}
	
	\begin{figure}[!htpb]
		\centering
		\subfloat[Exact control $u$.]{\includegraphics[width=0.32\textwidth]{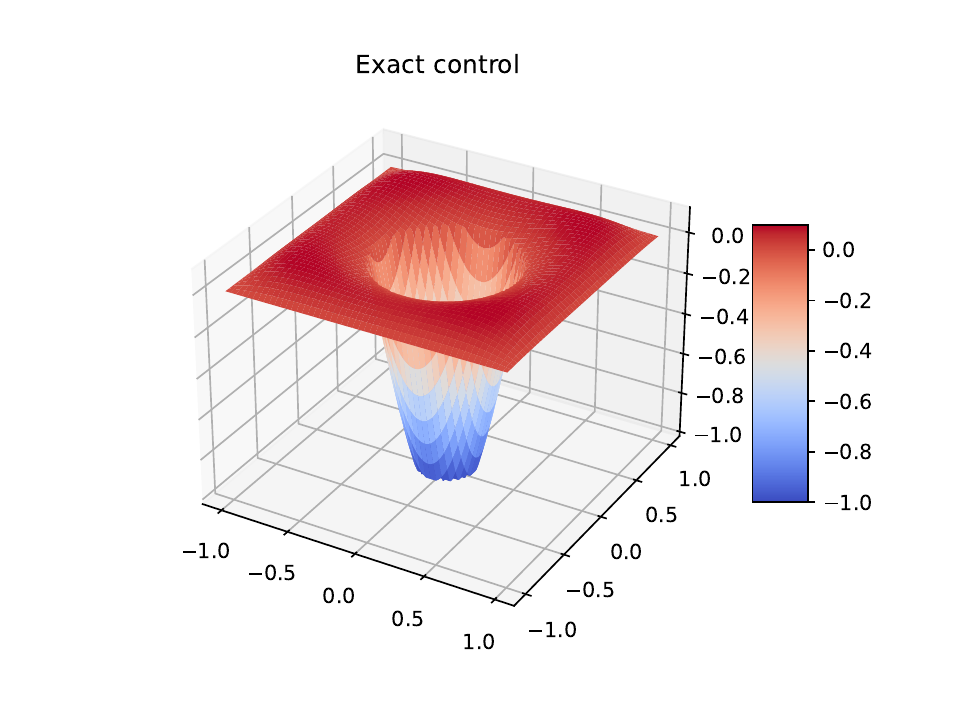}}
		\subfloat[Computed control $u$.]{\includegraphics[width=0.32\textwidth]{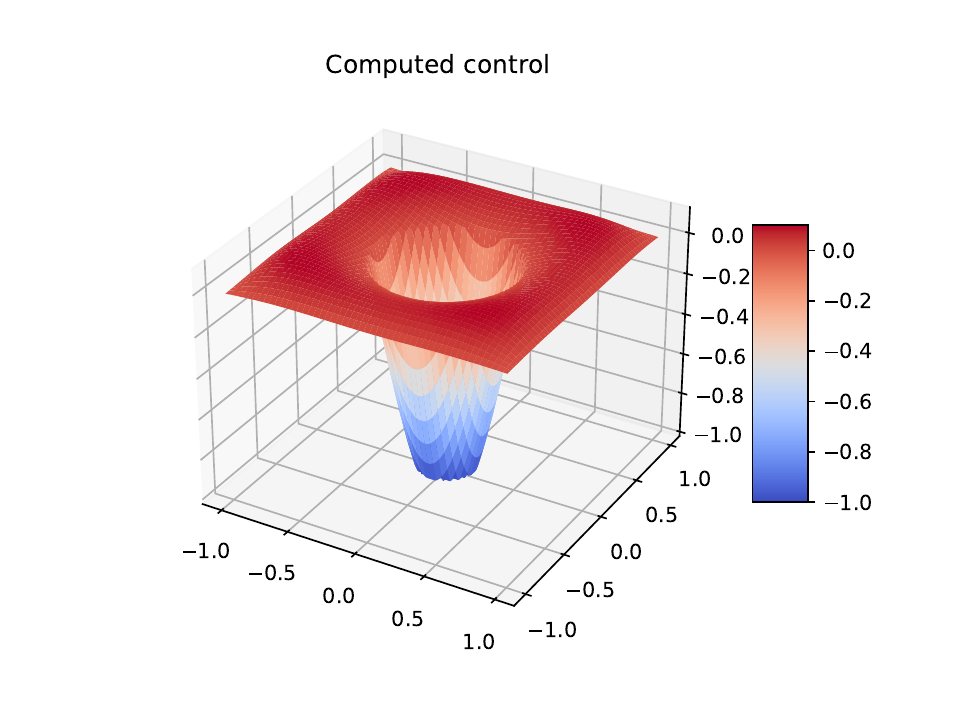}}
		\subfloat[Error of control $u$.]{\includegraphics[width=0.32\textwidth]{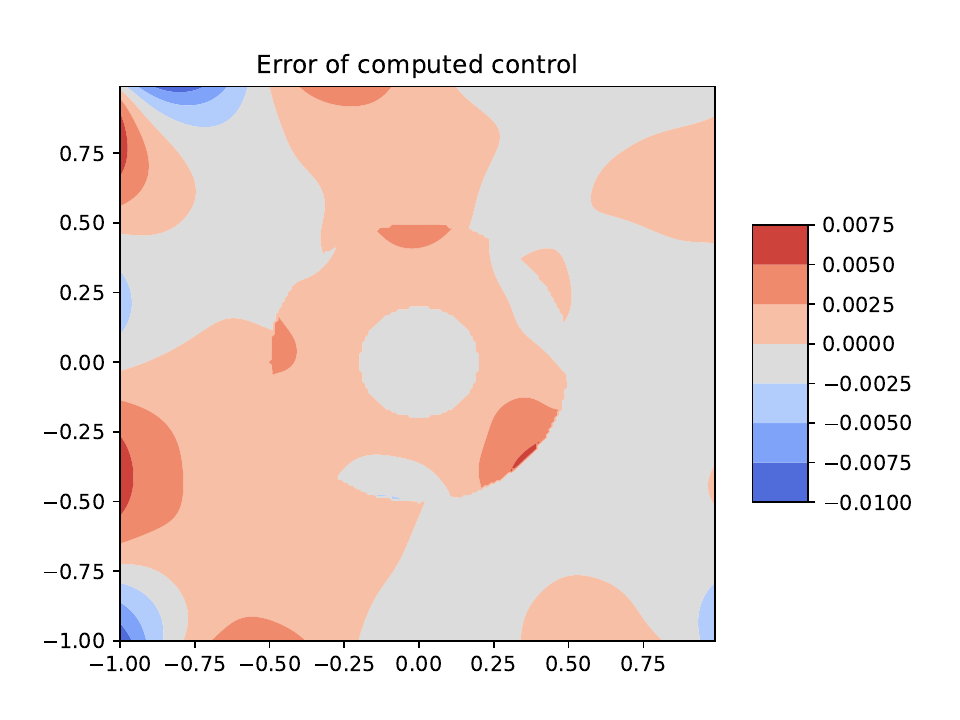}}\\
		\subfloat[Exact state $y$.]	{\includegraphics[width=0.32\textwidth]{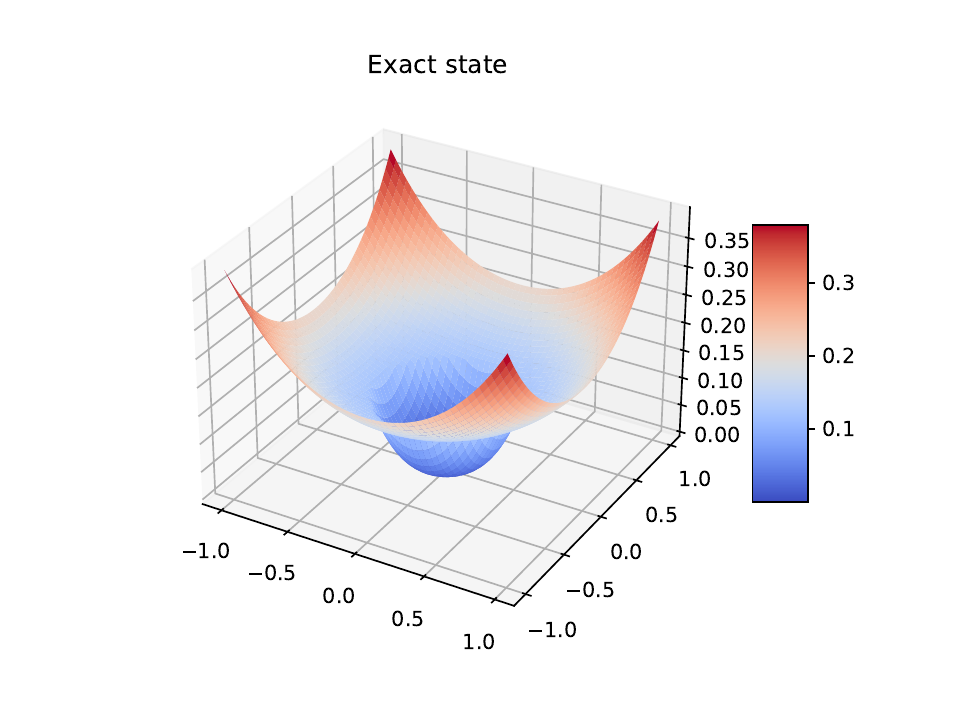}}
		\subfloat[Computed state $y$.]	{\includegraphics[width=0.32\textwidth]{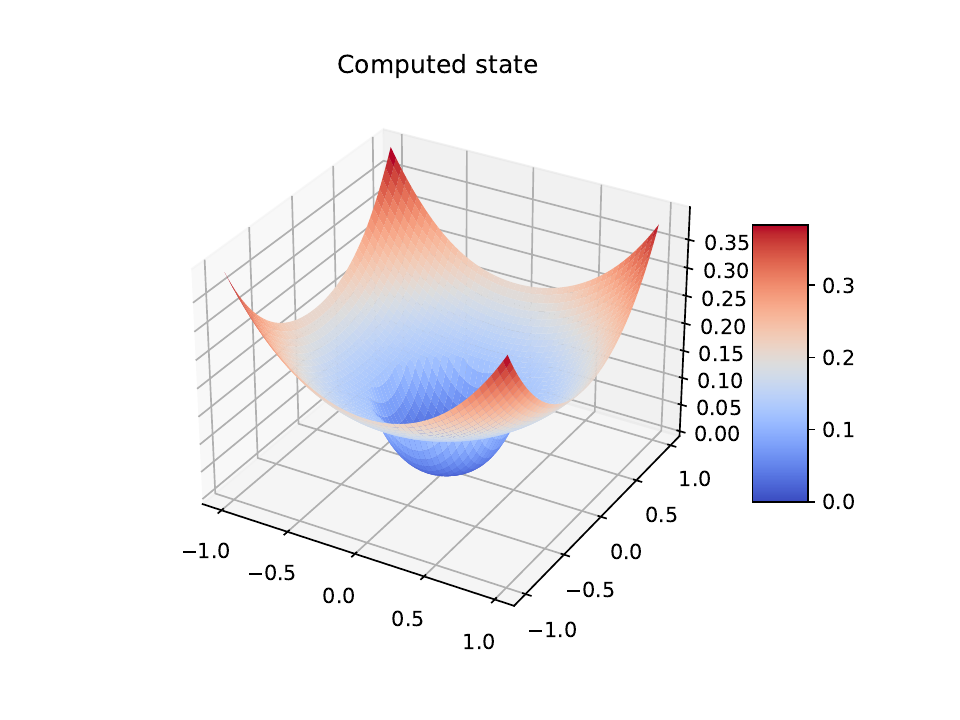}}
		\subfloat[Error of state $y$.]	{\includegraphics[width=0.32\textwidth]{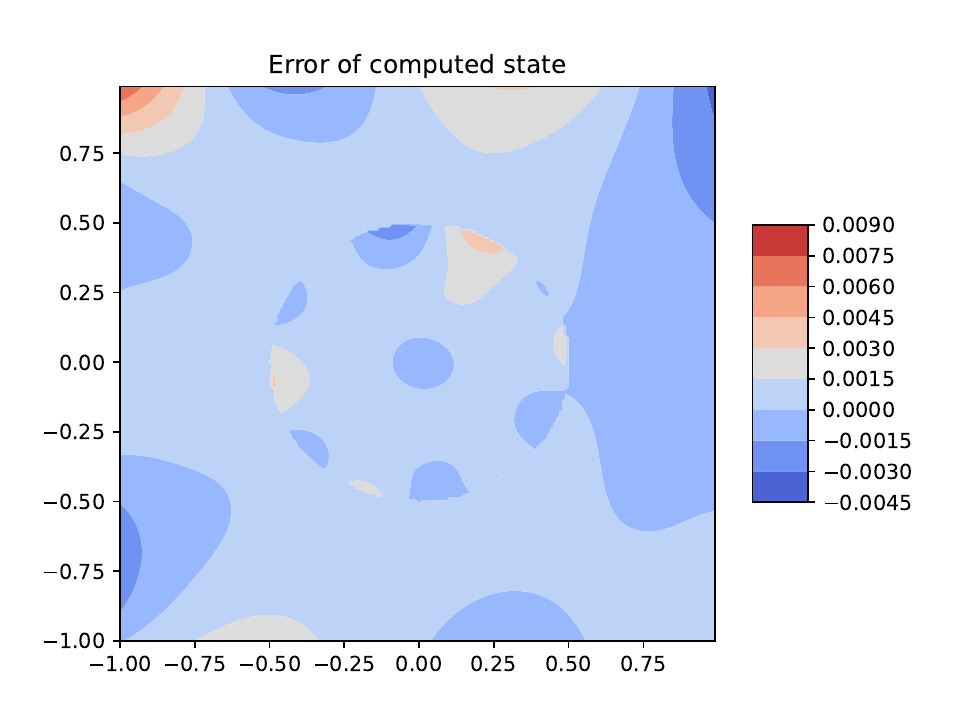}}
		\caption{Numerical results of \Cref{alg:pinn} for \Cref{ex:elliptic-reg-cc}.}
		\label{fig:ex2-pinn-training-results}
	\end{figure}

	\begin{figure}[!htpb]
		\centering
		\subfloat[Exact control $u$.]{\includegraphics[width=0.32\textwidth]{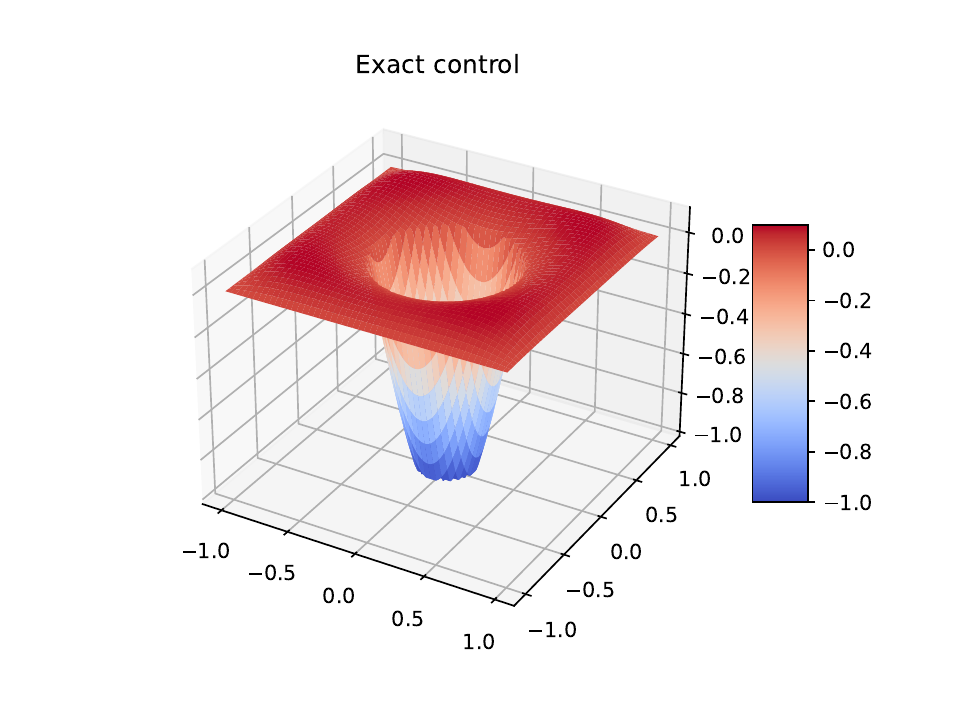}}
		\subfloat[Computed control $u$.]{\includegraphics[width=0.32\textwidth]{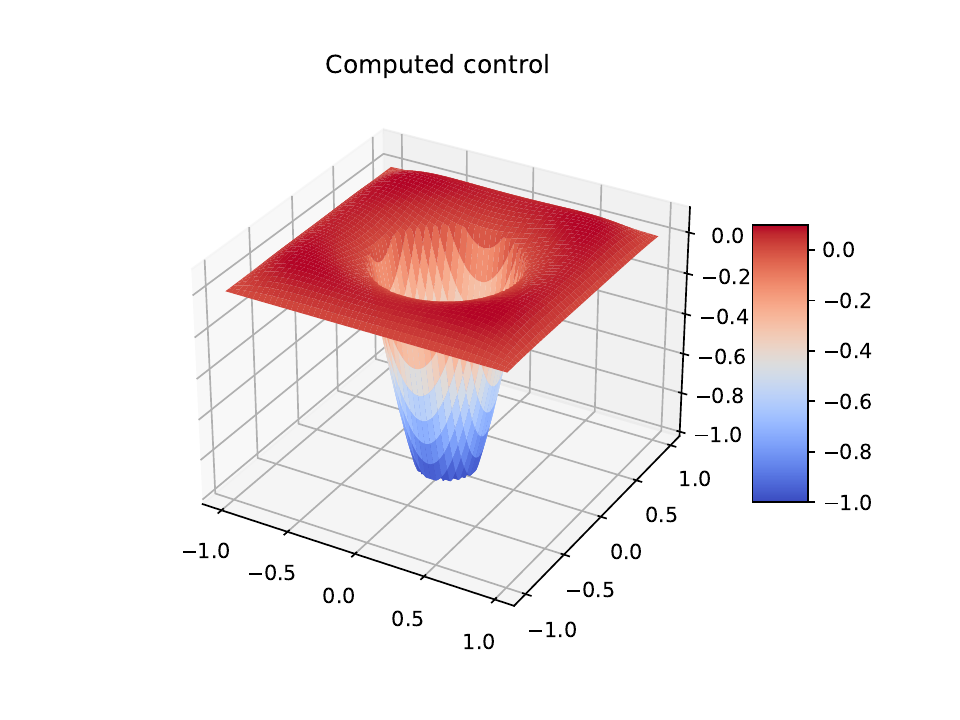}}
		\subfloat[Error of control $u$.]{\includegraphics[width=0.32\textwidth]{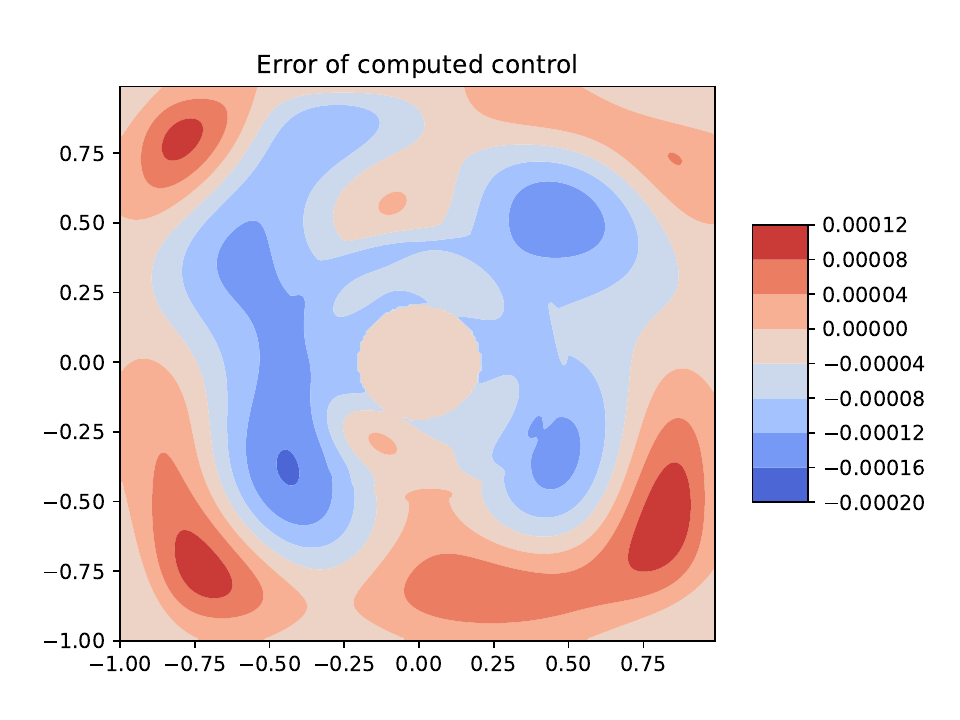}}\\
		\subfloat[Exact state $y$.]	{\includegraphics[width=0.32\textwidth]{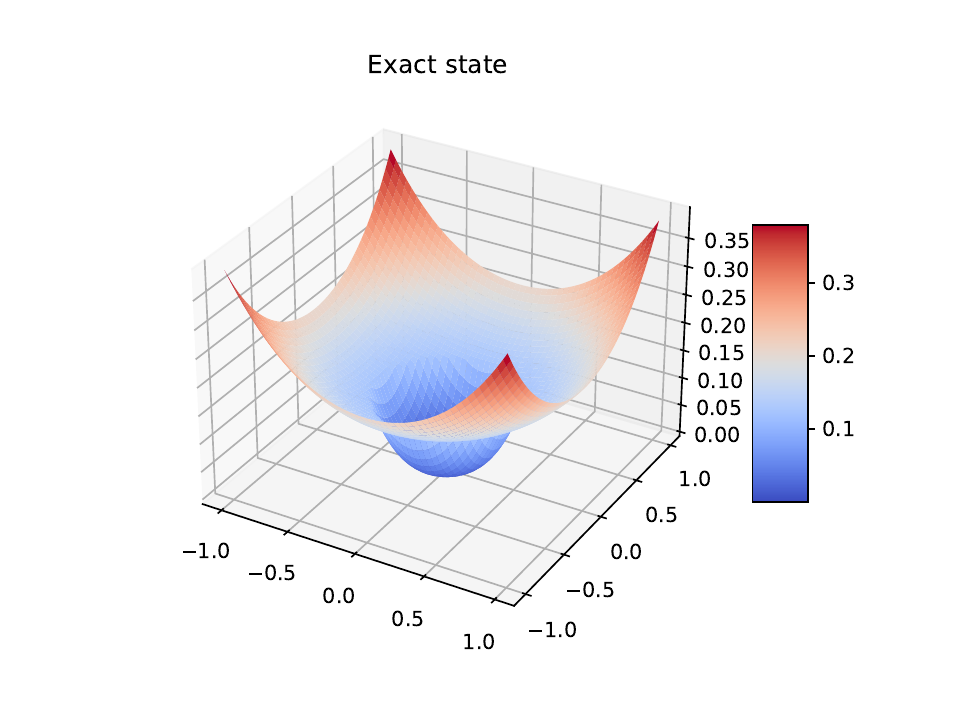}}
		\subfloat[Computed state $y$.]	{\includegraphics[width=0.32\textwidth]{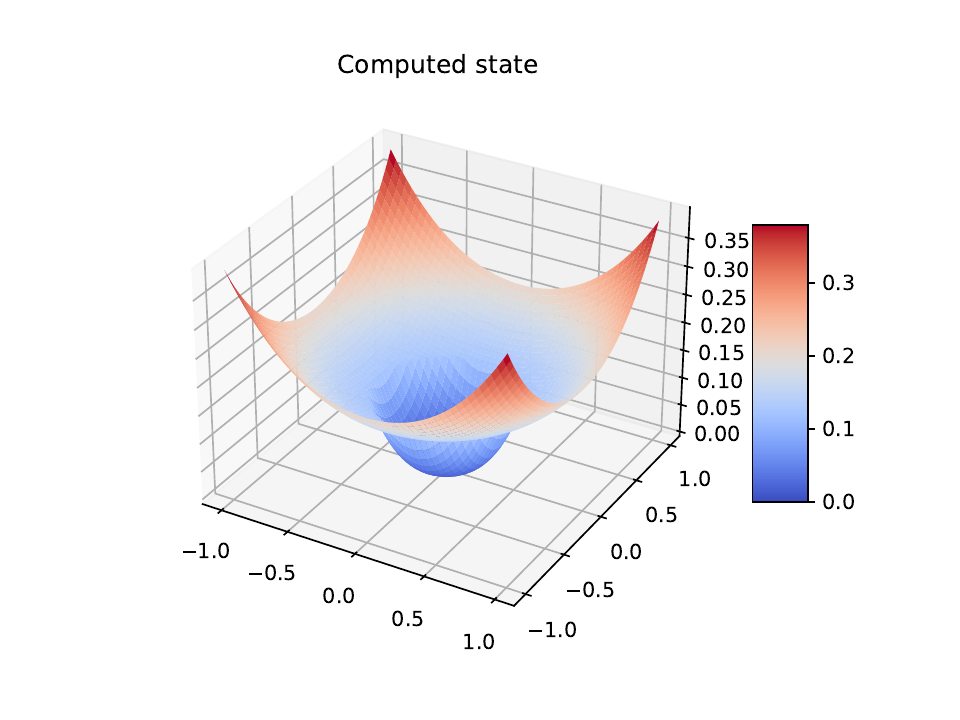}}
		\subfloat[Error of state $y$.]	{\includegraphics[width=0.32\textwidth]{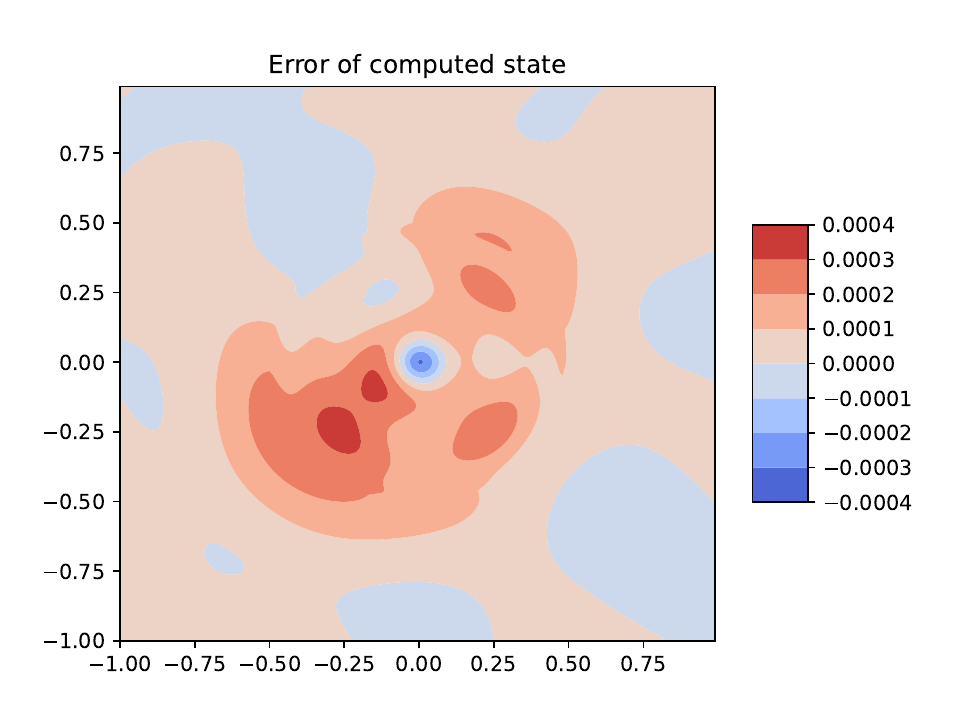}}
		\caption{Numerical results of \Cref{alg:pinn-hc} with Option I for \Cref{ex:elliptic-reg-cc}.}
		\label{fig:ex2-pinnhc-training-results}
	\end{figure}
	
	\begin{figure}[!htpb]
		\centering
		\subfloat[Exact control $u$.]{\includegraphics[width=0.32\textwidth]{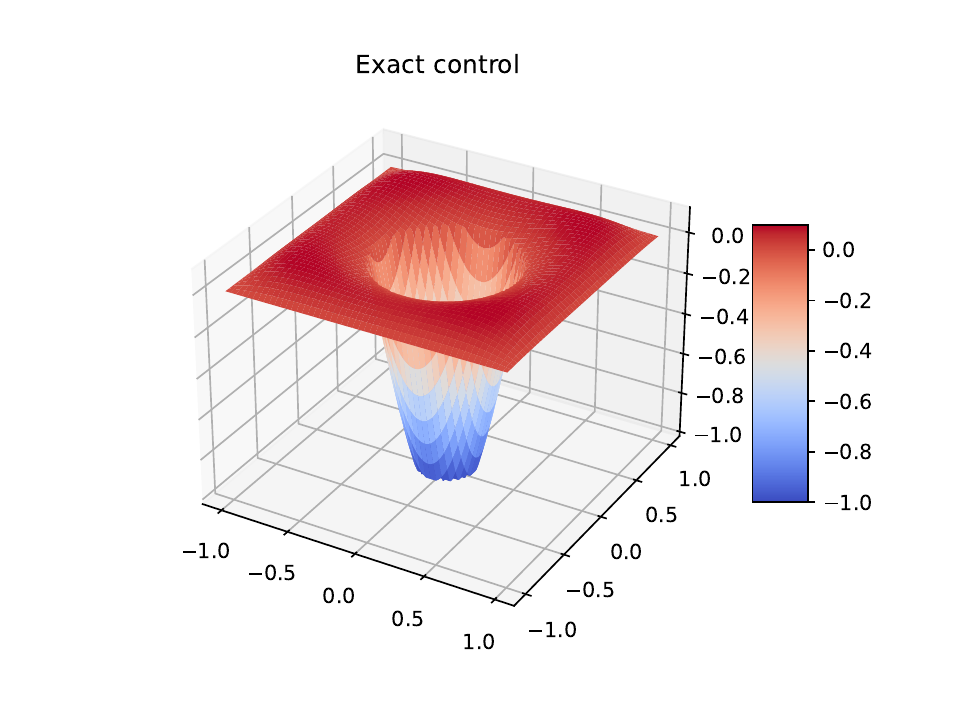}}
		\subfloat[Computed control $u$.]{\includegraphics[width=0.32\textwidth]{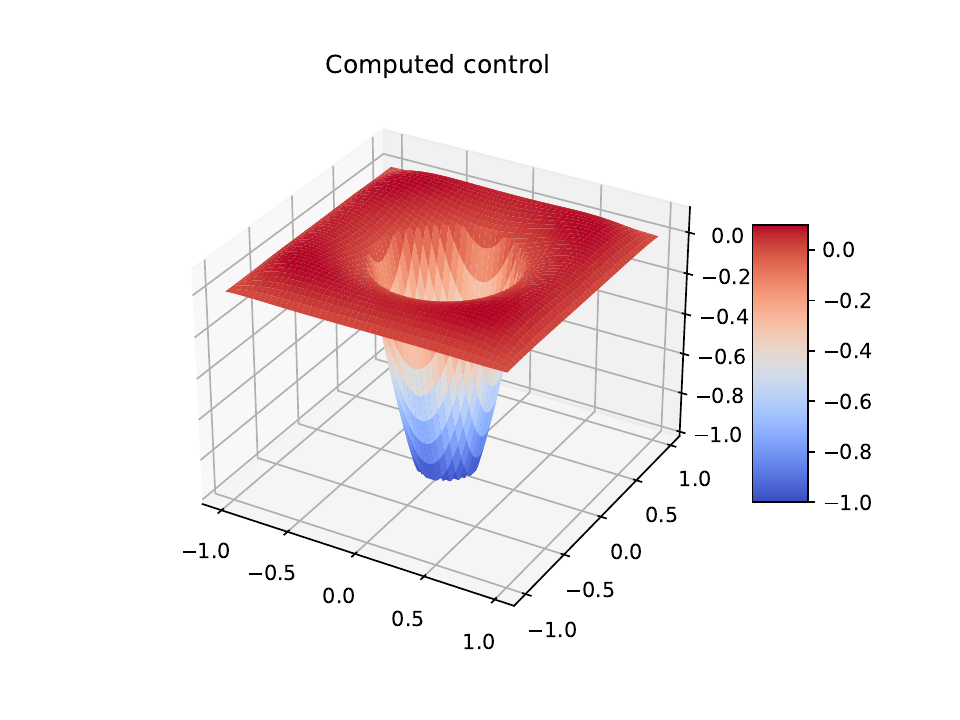}}
		\subfloat[Error of control $u$.]{\includegraphics[width=0.32\textwidth]{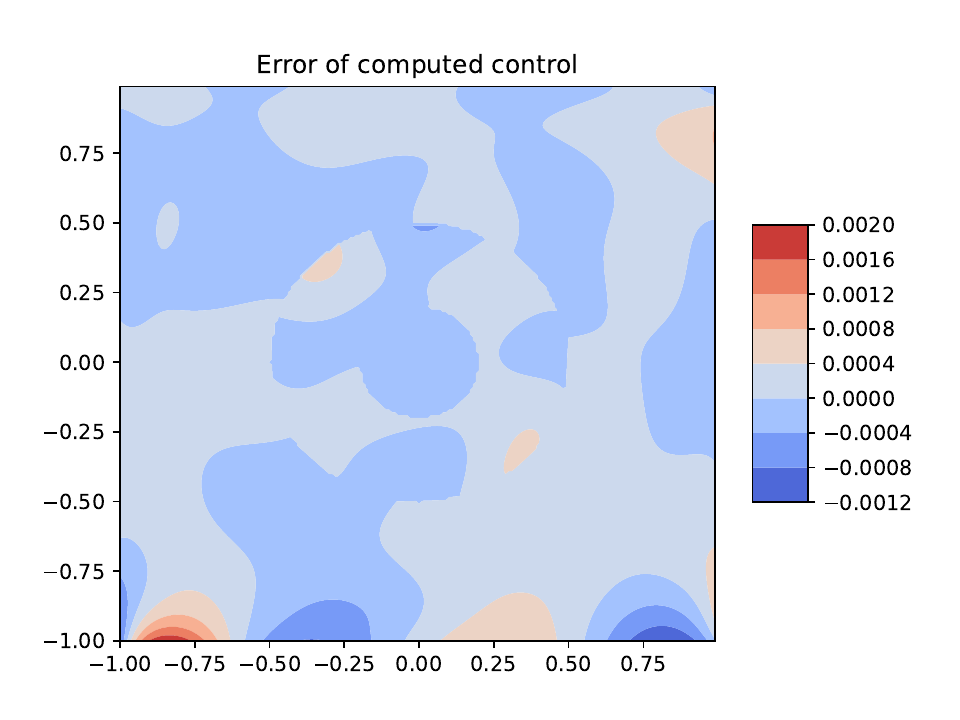}}\\
		\subfloat[Exact state $y$.]	{\includegraphics[width=0.32\textwidth]{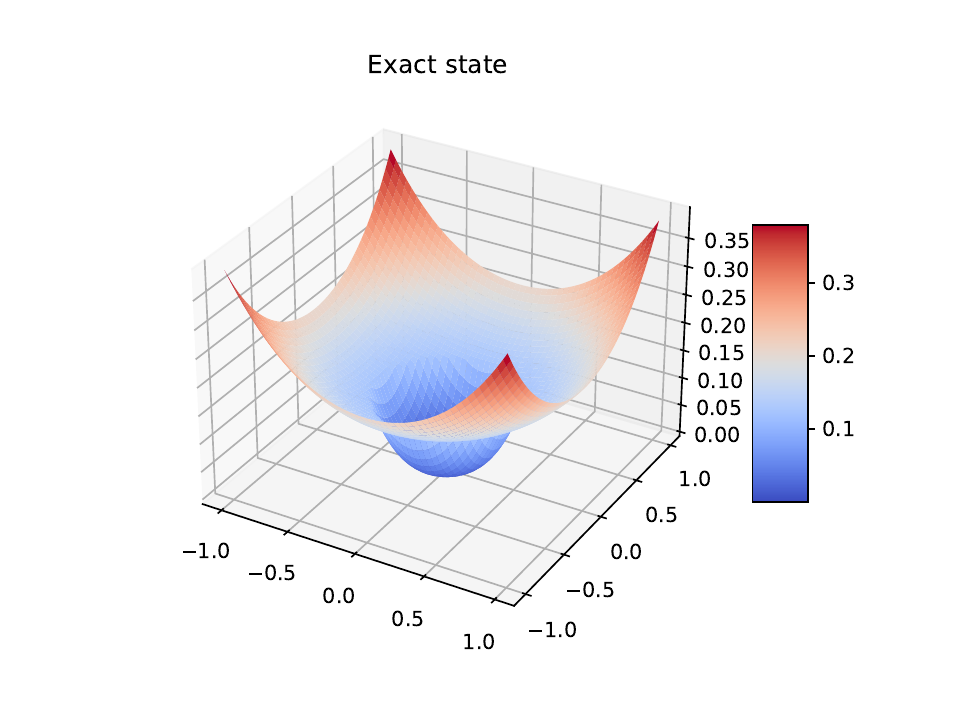}}
		\subfloat[Computed state $y$.]	{\includegraphics[width=0.32\textwidth]{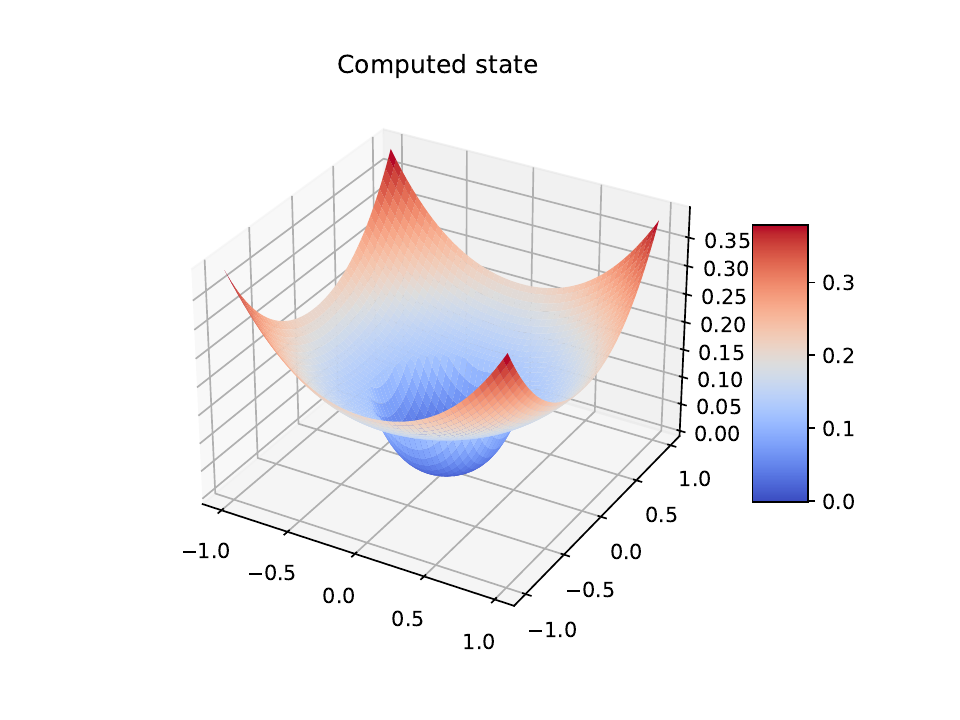}}
		\subfloat[Error of state $y$.]	{\includegraphics[width=0.32\textwidth]{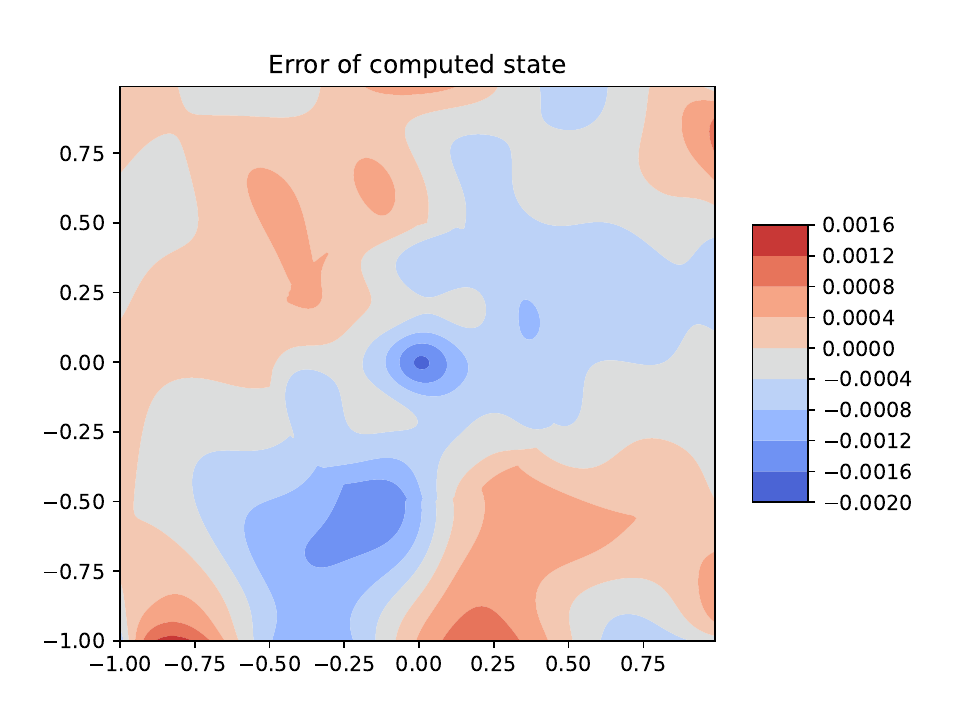}}
		\caption{Numerical results of \Cref{alg:pinn-hc} with Option II for \Cref{ex:elliptic-reg-cc}.}
		\label{fig:ex2-pinnhcnn-training-results}
	\end{figure}

To further validate the effectiveness of \Cref{alg:pinn-hc}, we compare the numerical results with the immersed finite element method (IFEM) in \cite{zhang2015immersed}, which is a benchmark mesh-based traditional numerical algorithm for solving elliptic interface optimal control problems.
For this purpose, we consider another set of PDE coefficient $\beta^- = 1$ and $\beta^+ = 5$. 
With the updated $\beta$ value, the numerical example becomes identical to the one in \cite[Example 2]{zhang2015immersed}, enabling us to directly compare the errors of the computed solutions with the ones reported in \cite[Table 4]{zhang2015immersed}.
In \cite{zhang2015immersed},  the solutions are computed on an $N \times N$ uniform grid over $\overline{\Omega}$ with different mesh resolutions $N$.
Here, we fix the training resolution $N = 32$ for \Cref{alg:pinn-hc} throughout the remaining experiments.

Again, we randomly sample the training sets $\mathcal{T}=\{x^i\}_{i=1}^{M} \subset \Omega$, $\mathcal{T}_B=\{x_B^i\}_{i=1}^{M_B} \subset \partial \Omega$ and $\mathcal{T}_{\Gamma}=\{x_\Gamma^i\}_{i=1}^{M_\Gamma} \subset \Gamma$ with $M = N^2$, $M_B = M_\Gamma = 8 \times N$ and $N = 32$.
We adopt the same configurations for the neural networks $\hat{g}(x; \theta_g)$, $\hat{h}(x; \theta_h)$, $\hat{\phi}(x, z; \theta_\phi)$, $\hat{y}(x; \theta_y)$, and $\hat{p}(x; \theta_p)$ as above.
We also apply the same optimization algorithms and stepsizes for training $\hat{g}(x; \theta_g)$, $\hat{h}(x; \theta_h)$, and $\hat{\phi}(x, z; \theta_\phi)$.
To train the neural networks $\hat{y}(x; \theta_y)$ and $\hat{p}(x; \theta_p)$ in  \Cref{alg:pinn-hc}, we fix the number of ADAM iterations to 40,000 for Option I and to 60,000 for Option II.
For Option I, the learning rate is initially set to be $1\times 10^{-3}$ and finally decreases to $3 \times 10^{-5}$ by a preset scheduler.
For Option II, the learning rate is initially set to be $5 \times 10^{-3}$ and finally reduces to $3\times 10^{-4}$.
After the above iterations, we fix the parameters in $\hat{p}(x; \theta_p)$ and train $\hat{y}(x; \theta_y)$ for 300 L-BFGS iterations with the stepsize $1$ to further improve the accuracy of $\hat{y}$

We use the $L^2$-errors defined in \cite{zhang2015immersed} to evaluate and compare the numerical accuracy of the computed solutions.
Following \cite{zhang2015immersed}, we evaluate the $L^2$-errors of the solution on an $N \times N$ uniform grid over $\overline{\Omega}$ with $N = 16$, $32$, $64$, $128$ and $256$.
The results are reported in \Cref{tab:ex1-err-comp}.

When $N = 32$, the $L^2$-errors of the computed $\hat{u}$, $\hat{y}$, and $\hat{p}$ by \Cref{alg:pinn-hc} with both Options I and II are significantly lower than those by IFEM.
Even if  the mesh resolution increases to $N = 256$,  \Cref{alg:pinn-hc} with Option I and Option II are still comparable with IFEM. Moreover, note that after training the neural networks at the resolution $N=32$, the evaluation of  \Cref{alg:pinn-hc} for a new resolution requires only a forward pass of these neural networks . In contrast, for each resolution, IFEM requires solving the elliptic interface optimal control problem from scratch, which is much more computationally expensive. 
These results validate that the mesh-free nature and the generalization ability of \Cref{alg:pinn-hc} make it effective and numerically favorable for elliptic interface optimal control problems.

\begin{table}[!ht]
	\centering
	\footnotesize
	\caption{The $L^2$-errors of the computed solutions for \Cref{ex:elliptic-reg-cc} with $\beta^- = 1$ and $\beta^+ = 5$ evaluated on different grid resolutions $N$. 
	Here, $u^*$, $y^*$, and $p^*$ are the exact solutions given in \eqref{eq:exact_elliptic_p}-\eqref{eq:exact_elliptic_u}, and $\hat{u}$, $\hat{y}$, and $\hat{p}$ are the computed solutions by each corresponding algorithm.
	The results of IFEM~\cite{zhang2015immersed} are computed and evaluated with each mesh resolution $N$, while the results of \Cref{alg:pinn-hc} are computed with fixed sampling resolution $N=32$ and evaluated with each mesh resolution $N$.}
 	\begin{tabular}{c c c c } 
 		\toprule
 		\multirow{2}[3]{*}{$N$} & \multicolumn{3}{c}{$\|\hat{u} - u^*\|_{L^2(\Omega)}$} \\ [0.5ex]
 		\cmidrule(lr){2-4}
		~ & ~~~~~~~~~IFEM \cite{zhang2015immersed}~~~~~~~~~ & \Cref{alg:pinn-hc} with Option I & \Cref{alg:pinn-hc} with Option II  \\
		\midrule
 		$16$ & $2.0049 \times 10^{-2}$ & $7.8972 \times 10^{-5}$ & $9.4242 \times 10^{-4}$ \\
 		$32$ & $5.8477 \times 10^{-3}$ & $8.0359 \times 10^{-5}$ & $7.8161 \times 10^{-4}$ \\
 		$64$ & $1.4215 \times 10^{-3}$ & $8.1759 \times 10^{-5}$ & $7.1714 \times 10^{-4}$ \\
 		$128$ & $3.6148 \times 10^{-4}$ & $8.2311 \times 10^{-5}$ & $6.8934 \times 10^{-4}$ \\
 		$256$ & $9.6419 \times 10^{-5}$ & $8.2656 \times 10^{-5}$ & $6.7645 \times 10^{-4}$ \\
		\midrule
		\multirow{2}[3]{*}{$N$} & \multicolumn{3}{c}{$\|\hat{p} - p^*\|_{L^2(\Omega)}$}  \\ [0.5ex] 
 		\cmidrule(lr){2-4}
		~ & IFEM \cite{zhang2015immersed} & \Cref{alg:pinn-hc} with Option I & \Cref{alg:pinn-hc} with Option II  \\
		\midrule
		$16$ & $2.5823 \times 10^{-2}$ & $7.8993 \times 10^{-5}$ & $9.4317 \times 10^{-4}$ \\ 
		$32$ & $6.6744 \times 10^{-3}$ & $8.1344 \times 10^{-5}$ & $7.8366 \times 10^{-4}$ \\
		$64$ & $1.6418 \times 10^{-3}$ & $8.2633 \times 10^{-5}$ & $7.1928 \times 10^{-4}$ \\
		$128$ & $4.0256 \times 10^{-4}$ & $8.3286 \times 10^{-5}$ & $6.9175 \times 10^{-4}$ \\
		$256$ & $1.0293 \times 10^{-4}$ & $8.3614 \times 10^{-5}$ & $6.7891 \times 10^{-4}$ \\
 		\midrule
 		\multirow{2}[3]{*}{$N$} & \multicolumn{3}{c}{$\|\hat{y} - y^*\|_{L^2(\Omega)}$}  \\ [0.5ex] 
 		\cmidrule(lr){2-4}
		~ & IFEM \cite{zhang2015immersed} & \Cref{alg:pinn-hc} with Option I & \Cref{alg:pinn-hc} with Option II  \\
		\midrule
		$16$ & $5.6594 \times 10^{-3}$ & $1.3021 \times 10^{-4}$ & $7.9867 \times 10^{-4}$ \\
		$32$ & $1.4803 \times 10^{-3}$ & $1.3877 \times 10^{-4}$ & $6.8886 \times 10^{-4}$ \\
		$64$ & $3.6993 \times 10^{-4}$ & $1.4114 \times 10^{-4}$ & $6.4501 \times 10^{-4}$ \\
		$128$ & $9.4048 \times 10^{-5}$ & $1.4226 \times 10^{-4}$ & $6.2584 \times 10^{-4}$ \\
		$256$ & $2.2873 \times 10^{-5}$ & $1.4282 \times 10^{-4}$ & $6.1696 \times 10^{-4}$ \\
		\bottomrule
 	\end{tabular}
	\normalsize
	\label{tab:ex1-err-comp}
\end{table}

\medskip

\noindent\textbf{Example 2.}\exmplabel{ex:elliptic-comp} To further validate the effectiveness of  \Cref{alg:pinn-hc} with Option I, we consider problem \eqref{eq:ocip-elliptic}-\eqref{eq:control-constraint-elliptic} with a complicated interface.
	In particular, we take $\Omega = (-1, 1) \times (-1, 1) \subset \bR^2$, and the interface $\Gamma$ is the curve defined by the polar coordinate equation
	$  r = 0.5 + 0.2\sin (5\theta).$
	The shape of $\Gamma, \Omega^-$ and $\Omega^+$ is illustrated in \Cref{fig:ex3-interface-shape} (a). We then set $\alpha=1$, $\beta^- = 1$, $\beta^+ = 10$, $g_0 = g_1 = 0$, $h_0 = 0$, $y_d(x) = (x_1^2 - 1)(x_2^2 - 1),$ and 
			$f(x) =
				2 \beta^\pm (2 - x_1^2 - x_2^2), \text{~if~} x \in \Omega^\pm.$
	Compared with Example 1, this example is more general and its exact solution is unknown.

		\begin{figure}[htbp]
		\centering
		\subfloat[The shape $\Gamma$.]{\includegraphics[width=0.25\textwidth]{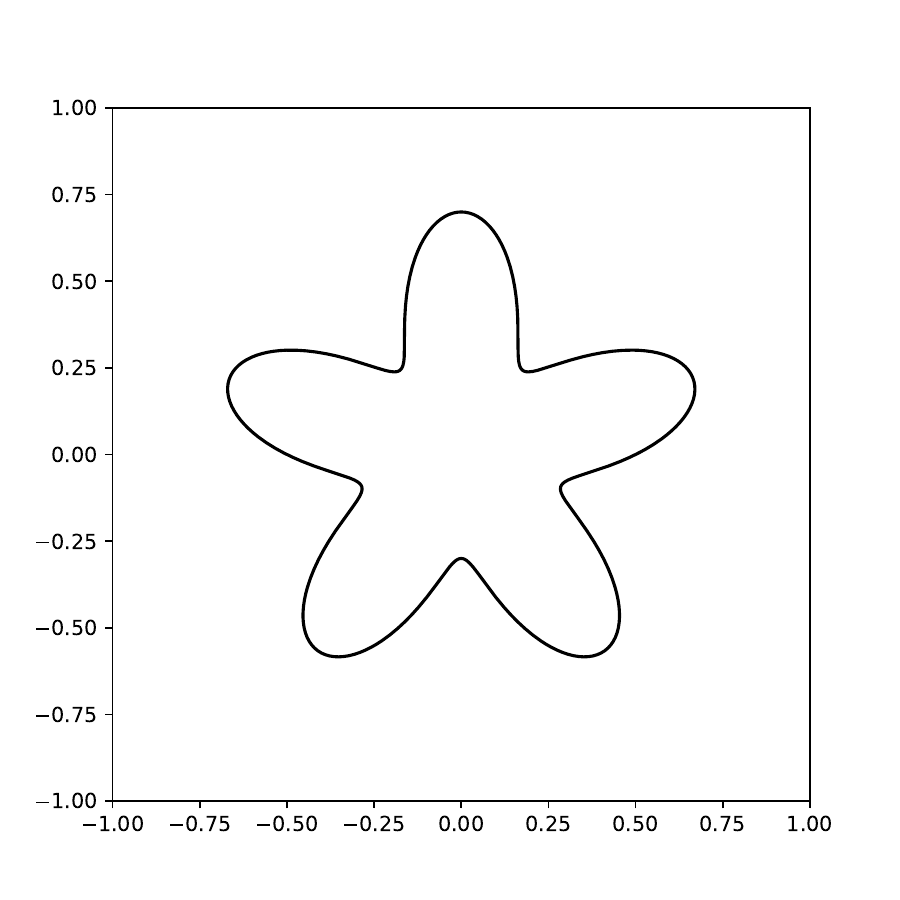}}
			\subfloat[Computed control $u$.]{\includegraphics[width=0.32\textwidth]{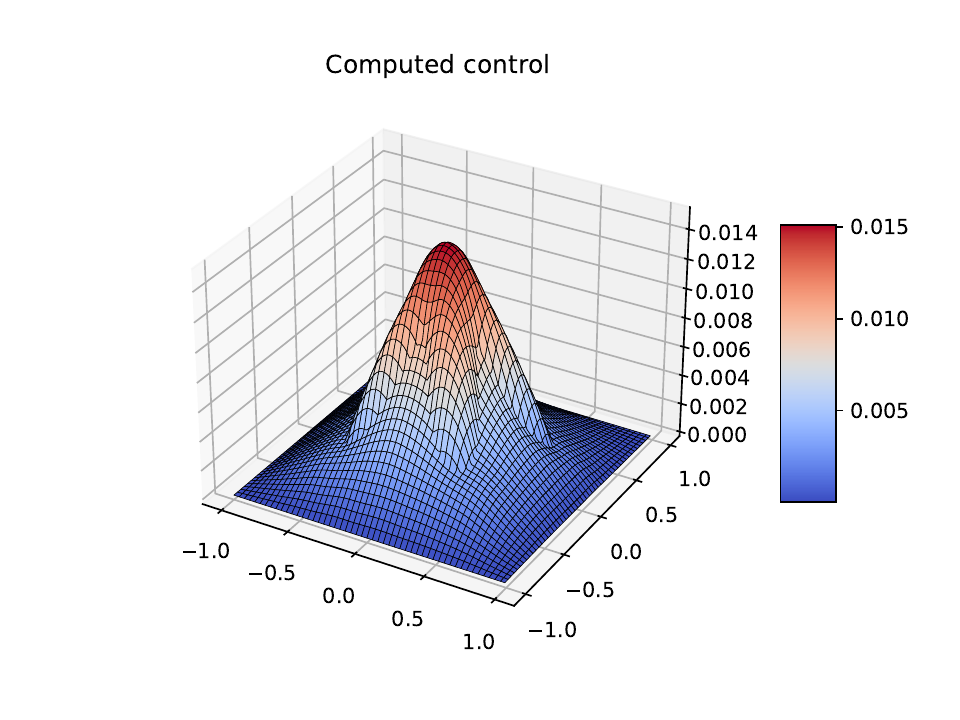}}
				\subfloat[Computed control $u$.]{\includegraphics[width=0.32\textwidth]{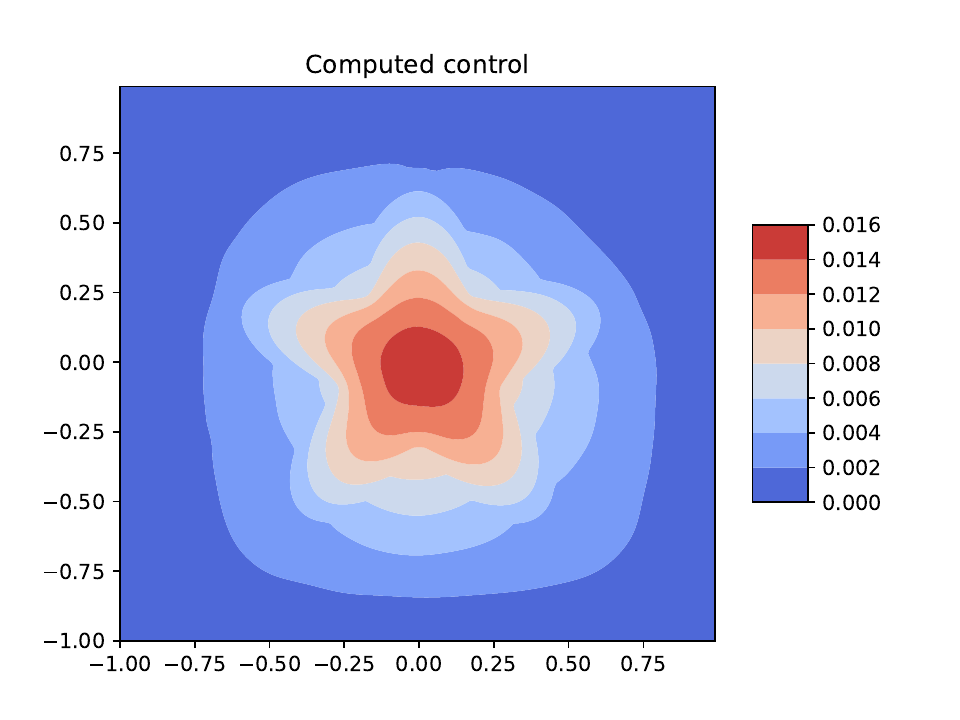}}\\
			\subfloat[The graph of $-\phi$.]{\includegraphics[width=0.28\textwidth]{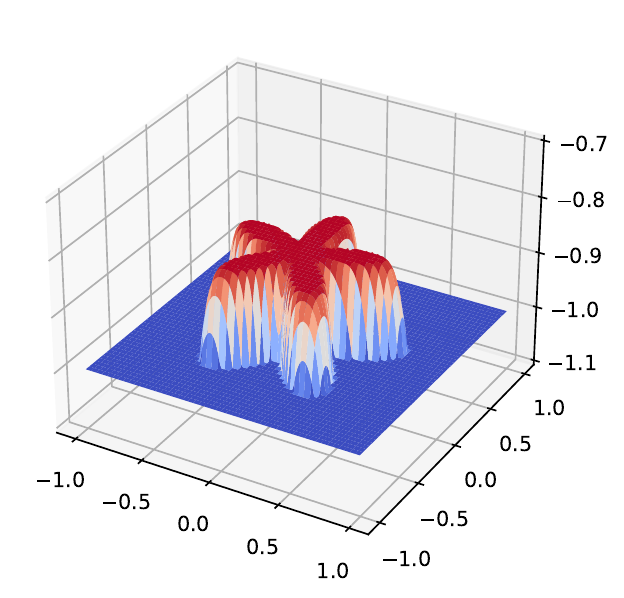}}
		\subfloat[Computed state $y$.]{\includegraphics[width=0.32\textwidth]{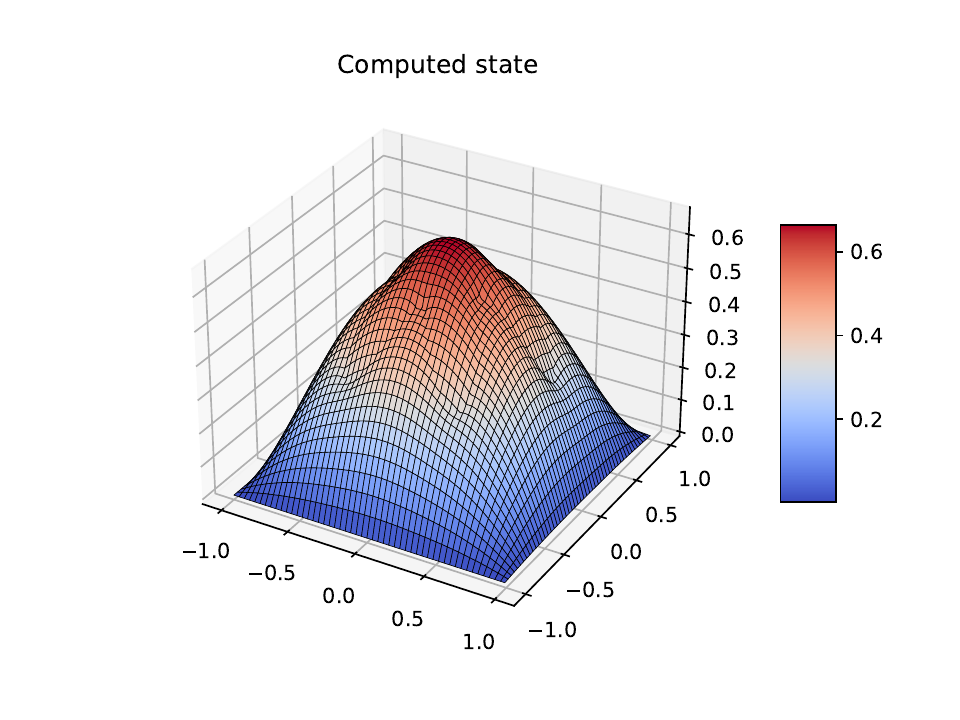}}
		\subfloat[Computed state $y$.]	{\includegraphics[width=0.32\textwidth]{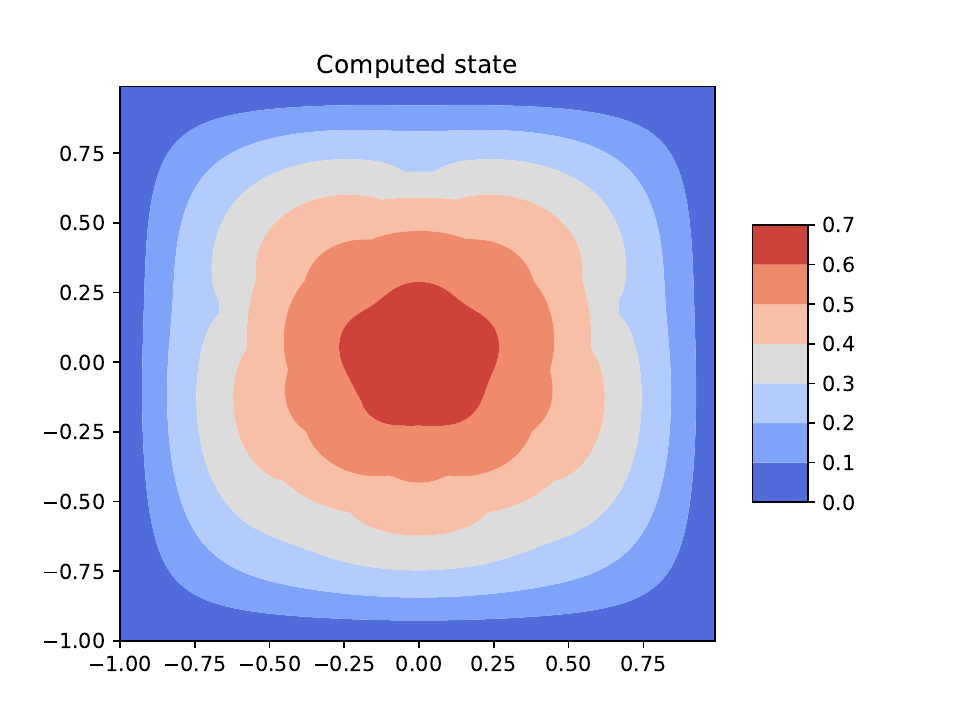}}
			\caption{Numerical settings and results for \Cref{ex:elliptic-comp}. (a) and (d): Star shaped $\Gamma$ and the auxiliary function $\phi$. (b) and (c): The computed control $u$. (e) and (f): The computed state $y$. }
		\label{fig:ex3-interface-shape}
	\end{figure}

	To implement \Cref{alg:pinn-hc} with Option I, we first define two fully-connected neural networks $\cN_y(x, \phi(x); \theta_y)$ and $\cN_p(x, \phi(x); \theta_p)$, which consist of three hidden layers with 100 neurons per hidden layer.
	Then, $y$ and $p$ are respectively approximated by the $\hat{y}(x;\theta_y)$ and  $\hat{p}(x;\theta_p)$ given in \eqref{eq:neural-form-hard-bcij} and  \eqref{eq:neural-form-p}, but with $g = 0$ and $h(x) = (x_1^2 - 1)(x_2^2 - 1)$.
	The auxiliary function $\phi$ can be constructed by \Cref{thm:aux-func-construction} (see \Cref{ex:aux-func-ex-3}).
	Here, we first define
	\begin{equation*}
		\phi_0(r, \theta) = \begin{cases}
			0.2^3, & \text{~if~} r - 0.2\sin(5\theta) \leq 0.3, \\
			{0.2 ^ 3 - (0.2 - f(r, \theta))^3}, & \text{~if~} 0.3 \leq r - 0.2\sin(5\theta) \leq 0.5, \\
			0, & \text{~if~} r - 0.2\sin(5\theta) \geq 0.5,
		\end{cases}
	\end{equation*}
	where $f(r, \theta) = 0.5 + 0.2\sin(5\theta) - r$.
	Then it is easy to check that $\phi := 1 - 20 \cdot \phi_0$ satisfies \eqref{eq:aux-func-prop}. The graph of $-\phi$ is shown in \Cref{fig:ex3-interface-shape} (d).
	
	We randomly sample the training sets $\mathcal{T}=\{x^i\}_{i=1}^{16384} \subset \Omega$ and $\mathcal{T}_{\Gamma}=\{x_\Gamma^i\}_{i=1}^{1024} \subset \Gamma$  with respect to the polar angle.
	The neural networks are initialized randomly  following the default settings of PyTorch.
	We set $w_{p, \Gamma_n} = 3$ and $w_{y, r} = w_{y, \Gamma_n} = w_{p, r} = 1$.
	The neural networks are trained with $40,000$ ADAM iterations, where $\theta_y$ and $\theta_p$ are updated simultaneously in each iteration.
	The initial learning rate is $10^{-2}$ in the first $5,000$ iterations, then $3\times 10^{-3}$ in $5,001$ to $10,000$ iterations, then $10^{-3}$ in $10,001$ to $20,000$ iterations, finally $3\times 10^{-3}$ in $20,001$ to $40,000$ iterations.
	
	The computed $u$ and $y$ are presented in \Cref{fig:ex3-interface-shape}.
	We can see that the computed control and state by Option I of \Cref{alg:pinn-hc} capture the nonsmoothness across the interface even if the geometry of the interface is complicated.

\section{Extensions}\label{sec:furtuer-extension}

In this section, we show that  \Cref{alg:pinn-hc} can be easily extended to other types of  interface optimal control problems. For this purpose, we investigate an elliptic interface optimal control problem, where the control variable acts on the interface, and a parabolic interface optimal control problem.

\subsection{Control on the interface}

We consider the optimal control problem:
\begin{equation}\label{eq:ocip-elliptic-ic}
	\begin{aligned}
		&\min_{y\in L^2(\Omega), u\in L^2(\Gamma)}  J(y, u) := \dfrac{1}{2} \int_\Omega (y - y_d)^2 dx + \dfrac{\alpha}{2} \int_\Gamma u^2 dx, \\
		\mathrm{s.t.} ~ &
		\begin{aligned}
			-\nabla\cdot(\beta\nabla y)= f ~\text{in}~\Omega \backslash \Gamma, ~
			 [y]_{\Gamma}=g_0,~ [\beta\partial_{\bm{n}}y]_{\Gamma}= u + g_1~ \text{on}~\Gamma,~
			 y= h_0~ \text{on}~\partial\Omega,
		\end{aligned}
	\end{aligned}
\end{equation}
together with the control constraint
\begin{equation}\label{eq:control-constraint-elliptic-ic}
	u \in U_{ad} := \{u \in L^2(\Gamma): u_a \leq u \leq u_b \text{~a.e.~on}~\Gamma\},
\end{equation}
where $u_a, u_b \in L^2(\Gamma)$. Above, all the notations are the same as those in  \eqref{eq:ocip-elliptic}-(\ref{eq:ip-elliptic}) but the control variable $u\in L^2(\Gamma)$ in   \eqref{eq:ocip-elliptic-ic}-\eqref{eq:control-constraint-elliptic-ic} acts on the interface rather than the source term.  Existence and uniqueness of the solution to  problem  \eqref{eq:ocip-elliptic-ic} can be found in \cite{yang2018interface}, and we have the following results.

\begin{theorem}[cf. \cite{yang2018interface}]\label{thm:optcond-elliptic-ic}
	Problem \eqref{eq:ocip-elliptic-ic}--\eqref{eq:control-constraint-elliptic-ic} admits a unique optimal control $(u^*,y^*)^\top \in U_{ad}\times L^2(\Omega)$, and the following first-order optimality system holds
	\begin{equation}\label{eq:oc-coij}
		u^* = \cP_{U_{ad}}\Big(  \left. -\dfrac{p^* }{\alpha} \right|_\Gamma (x) \Big),
	\end{equation}
	where $\mathcal{P}_{U_{ad}}(\cdot)$ denotes the projection  onto $U_{ad}$, and  $p^*$ is the adjoint variable associated with $u^*$, which is obtained from the successive solution of the following two equations
	\begin{equation}\label{eq:state-coij}
			 -\nabla\cdot(\beta\nabla y^*)= f ~\text{in}~\Omega \backslash \Gamma, ~
			 [y^*]_{\Gamma}=g_0,~ [\beta\partial_{\bm{n}}y^*]_{\Gamma}= g_1 + u^* ~\text{on}~\Gamma,~
			 y^*= h_0 ~\text{on}~\partial\Omega,
	\end{equation}
	\begin{equation}\label{eq:adjoint-coij}
			-\nabla\cdot(\beta\nabla p^*)=y^*-y_d ~\text{in}~\Omega \backslash \Gamma, ~
			 [p^*] _{\Gamma}=0,~[\beta \partial_{\bn}p^*]_{\Gamma}=0 ~\text{on}~\Gamma, ~
			 p^*=0 ~\text{on}~\partial\Omega.
	\end{equation}
\end{theorem}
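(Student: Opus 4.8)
The plan is to use the standard reduced-functional (adjoint) approach of PDE-constrained optimal control \cite{lions1971optimal,troltzsch2010optimal,hinze2008optimization}, which is tractable here because the state equation is linear in $u$ and the objective is strictly convex in $u$. First I would show that the state equation in \eqref{eq:ocip-elliptic-ic} defines a well-posed, affine control-to-state map. For fixed $u\in L^2(\Gamma)$, writing the weak form of $-\nabla\cdot(\beta\nabla y)=f$ with $[y]_\Gamma=g_0$, $[\beta\partial_{\bn}y]_\Gamma=u+g_1$, and $y=h_0$ on $\partial\Omega$ on the affine space $\{\,y:\ y|_{\Omega^\pm}\in H^1(\Omega^\pm),\ [y]_\Gamma=g_0,\ y|_{\partial\Omega}=h_0\,\}$, the control enters only through the bounded functional $v\mapsto\int_\Gamma (u+g_1)\,v\,ds$ (admissible by the trace theorem), while the bilinear form $\int_{\Omega\setminus\Gamma}\beta\,\nabla y\cdot\nabla v\,dx$ is coercive after homogenizing the data; Lax--Milgram then yields a unique weak solution $y(u)\in L^2(\Omega)$. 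Linearity in all data makes $S:u\mapsto y(u)$ affine and Lipschitz from $L^2(\Gamma)$ to $L^2(\Omega)$, so $S(u)=S_0u+y_\ell$ for a bounded linear operator $S_0:L^2(\Gamma)\to L^2(\Omega)$ and a fixed $y_\ell$, and $S'(u)\equiv S_0$.

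Next I would pass to the reduced cost $j(u):=J(S(u),u)=\tfrac12\|S_0u+y_\ell-y_d\|_{L^2(\Omega)}^2+\tfrac{\alpha}{2}\|u\|_{L^2(\Gamma)}^2$, a continuous quadratic functional that is strictly convex and coercive thanks to the term $\tfrac{\alpha}{2}\|u\|_{L^2(\Gamma)}^2$ with $\alpha>0$. Since $U_{ad}$ in \eqref{eq:control-constraint-elliptic-ic} is nonempty, convex and closed in $L^2(\Gamma)$ (indeed bounded, as $|u|\le\max\{|u_a|,|u_b|\}\in L^2(\Gamma)$), the direct method applies: a minimizing sequence is bounded, hence weakly convergent along a subsequence; $j$ is weakly lower semicontinuous (convex and continuous) and $U_{ad}$ is weakly closed, so a minimizer $u^*$ exists, and strict convexity makes it unique. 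Setting $y^*:=S(u^*)$ proves the first assertion of the theorem, and by convexity the optimality conditions below will also be sufficient.

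Finally I would derive the optimality system from the first-order variational inequality $\langle j'(u^*),\,u-u^*\rangle\ge 0$ for all $u\in U_{ad}$. The chain rule gives $j'(u^*)=S_0^*(S_0u^*+y_\ell-y_d)+\alpha u^*=S_0^*(y^*-y_d)+\alpha u^*$; a routine duality computation — testing the (homogeneous) linearized state equation by the adjoint $p^*$ of \eqref{eq:adjoint-coij} and the adjoint equation by $S_0v$, then integrating by parts across $\Gamma$ — identifies $S_0^*(y^*-y_d)$ with the trace $p^*|_\Gamma\in L^2(\Gamma)$, so the variational inequality becomes $\big(\alpha u^*+p^*|_\Gamma,\ u-u^*\big)_{L^2(\Gamma)}\ge 0$ for all $u\in U_{ad}$. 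For the box set $U_{ad}$ this is equivalent pointwise a.e.\ on $\Gamma$ to the projection formula $u^*=\mathcal{P}_{U_{ad}}(-p^*|_\Gamma/\alpha)$, i.e.\ \eqref{eq:oc-coij}, while \eqref{eq:state-coij} and \eqref{eq:adjoint-coij} are exactly the defining equations of $y^*$ and $p^*$.

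The step I expect to be the main obstacle is the functional-analytic bookkeeping on the slit domain $\Omega\setminus\Gamma$: precisely setting up the state and adjoint spaces with the prescribed jump $[y]_\Gamma=g_0$, verifying that an $L^2(\Gamma)$ control is an admissible datum for the flux-jump condition through the trace operator, and — so that \eqref{eq:oc-coij} is even meaningful — invoking interior/interface elliptic regularity to guarantee $p^*\in H^1$ in a neighbourhood of $\Gamma$, hence $p^*|_\Gamma\in H^{1/2}(\Gamma)\hookrightarrow L^2(\Gamma)$. Once this framework is in place, the existence, uniqueness, and duality arguments are standard.
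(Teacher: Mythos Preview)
The paper does not supply its own proof of this theorem: the marker ``cf.~\cite{yang2018interface}'' signals that the result is quoted from that reference, and the only additional remark in the paper is the one-line observation after the statement that convexity of \eqref{eq:ocip-elliptic-ic} makes the optimality system sufficient as well. Your plan---well-posedness of the state map via Lax--Milgram on the broken $H^1$ space, reduction to a strictly convex coercive quadratic on $L^2(\Gamma)$, existence/uniqueness by the direct method, and derivation of \eqref{eq:oc-coij} by computing $S_0^*$ through the adjoint equation---is precisely the standard Lions--Tr\"oltzsch argument one would expect in the cited source, and it is sound.

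One detail to watch when you carry it out: the sign of the interface term in the weak formulation. With the paper's conventions ($\bn$ outward from $\Omega^-$ and $[v]_\Gamma=v^+-v^-$), integrating by parts on $\Omega^\pm$ separately gives
\[
\int_{\Omega}\beta\,\nabla y\cdot\nabla v\,dx=\int_\Omega f\,v\,dx-\int_\Gamma(g_1+u)\,v\,ds
\]
for test functions with $[v]_\Gamma=0$, so the control enters with a minus sign; whether $S_0^*(y^*-y_d)$ comes out as $p^*|_\Gamma$ or $-p^*|_\Gamma$ depends on tracking this consistently, and you should verify it rather than simply match the target formula. Your identification of the genuine technical content---setting up the broken Sobolev framework with the prescribed jump data and ensuring $p^*$ has enough regularity near $\Gamma$ for $p^*|_\Gamma\in L^2(\Gamma)$---is accurate.
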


It is easy to see that problem \eqref{eq:ocip-elliptic-ic} is convex and hence the optimality system  \eqref{eq:oc-coij} is also sufficient.
Substituting \eqref{eq:oc-coij} into \eqref{eq:state-coij} yields
\small
\begin{equation}\label{eq:state-reduced-coij}
		 -\nabla\cdot(\beta\nabla y^*)= f ~ \text{in}~\Omega \backslash \Gamma, ~
		 [y^*]_{\Gamma}=g_0,~ [\beta\partial_{\bm{n}}y^*]_{\Gamma}= g_1 + \cP_{U_{ad}}\Big(  \left. -\dfrac{p^* }{\alpha} \right|_\Gamma (x) \Big) ~\text{on}~\Gamma,~
		 y^*= h_0 ~ \text{on}~\partial\Omega.
\end{equation}
\normalsize
Therefore, solving \eqref{eq:ocip-elliptic-ic}--\eqref{eq:control-constraint-elliptic-ic} is equivalent to solving \eqref{eq:adjoint-coij} and \eqref{eq:state-reduced-coij} simultaneously.

Next, we demonstrate the extension of \Cref{alg:pinn-hc} to solve problem  \eqref{eq:ocip-elliptic-ic}--\eqref{eq:control-constraint-elliptic-ic}.
First, the neural networks $\hat{y}$ and  $\hat{p}$ for approximating $y$ and $p$ are constructed in the same way as that in \Cref{sec:elliptic-pinns-hc}, see  \eqref{eq:neural-form-hard-bcij} and  \eqref{eq:neural-form-p}. The loss function is now defined as
\small
\begin{gather}\label{eq:loss-elliptic-coij-hc}
	\begin{aligned}
		\cL_{HC}(\theta_y, \theta_p) = & \frac{w_{y, r}}{M} \sum_{i=1}^{M} \left| -\Delta_x \hat{y}(x^i; \theta_{y})-\frac{f(x^i)}{\beta^\pm} \right| ^2 \\
		& + \frac{w_{y, \Gamma_n}}{M_{\Gamma}} \sum_{i=1}^{M_\Gamma} \left| [\beta\partial_{\bm{n}}\hat{y}]_\Gamma (x_{\Gamma}^i; \theta_{y}) - g_1(x_\Gamma^i) - \mathcal{P}_{[u_a(x_\Gamma^i), u_b(x_\Gamma^i)]}(-\frac{1}{\alpha} \hat{p}(x^i; \theta_p)) \right|^2 \\
		& + \frac{w_{p, r}}{M} \sum_{i=1}^{M} \left|-\Delta_x \hat{p}(x_i; \theta_{p})-\frac{\hat{y}(x_i; \theta_{y}) - y_d(x_i)}{\beta^\pm} \right|^2  + \frac{w_{p, \Gamma_n}}{M_{\Gamma}}\sum_{i=1}^{M_\Gamma} \left|[\beta\partial_{\bm{n}}\hat{p}]_\Gamma(x_i^{\Gamma}; \theta_{p}) \right|^2.
	\end{aligned}
\end{gather}
\normalsize
Then, we can easily obtain the hard-constraint PINN method for  \eqref{eq:ocip-elliptic-ic}-\eqref{eq:control-constraint-elliptic-ic} and we omit the details for succinctness.

\medskip

\noindent\textbf{Example 3.}\exmplabel{ex:elliptic-reg-ic}
	We consider an example of  \eqref{eq:ocip-elliptic-ic}-\eqref{eq:control-constraint-elliptic-ic} with $\alpha = 1$, $\beta^- = 1$, $\beta^+ = 10$,
	$u\in U_{ad} = \{u \in L^2(\Gamma): \sin(2 \pi x_1) \leq u(x) \leq 1 \text{~a.e. on~} \Gamma  \}$ and
	$
	u^*(x_1, x_2) = \max\{\sin(2\pi x_1), \min\{1, -\frac{1}{\alpha} p^*|_\Gamma(x_1, x_2)\}\}.
	$
	The rest of settings are the same as those in \Cref{ex:elliptic-reg-cc}.
	Then, we can see that $(u^*, y^*)^\top$, with $y^*$ defined in (\ref{eq:exact_elliptic_y}), is the solution to this problem.
	
	The neural networks $\cN_y(x, \phi(x); \theta_y), \cN_p(x, \phi(x); \theta_p)$ and the functions $g$, $h$, and $\phi$ are the same as those in Example 1. Moreover, we randomly sample the training sets $\mathcal{T}=\{x^i\}_{i=1}^{1024} \subset \Omega$ and $\mathcal{T}_{\Gamma}=\{x_\Gamma^i\}_{i=1}^{256} \subset \Gamma$. We initialize the neural network parameters $\theta_y, \theta_p$ following the default settings of PyTorch. All the weights in \eqref{eq:loss-elliptic-coij-hc} are set to be $1$.
	
	We employ 40,000 iterations of ADAM to train the neural networks and the parameters $\theta_y$ and $\theta_p$ are updated simultaneously in each iteration.
	The learning rate is set to be $5\times 10^{-3}$ in the first $5,000$ iterations, then $1\times 10^{-3}$ in the $5,001$ to $15,000$ iterations, then $5\times 10^{-4}$ in the $15,001$ to $30,000$ iterations, finally $1\times 10^{-4}$ in the $30,001$ to $40,000$ iterations.
	The strategy for computing the $L^2$-error of the control is similar to \eqref{eq:abs-rel-error-def},  instead that now we sample the testing points from the interface $\Gamma$.
	We use the same method as that in Example 1 for visualizing $y$ and its error.
	For the computed control $u$, we present its graph along {the interface circle  $\{(\cos (2 \pi \theta), \sin (2 \pi \theta)) \in \bR^2 : 0 \leq \theta \leq 1 \}$ with respect to the angle parameter $\theta$.}
	
	The numerical results  are shown in \Cref{fig:ex4-pinnhc-training-results}.
	It can be observed that the maximum absolute errors of $y$ and $u$ obtained by  the hard-constraint PINN method are approximately $6\times 10^{-4}$ and $7\times 10^{-5}$, respectively.
	Moreover,  the $L^2$-errors of the computed control $u$ are $\varepsilon_\text{abs} = 2.0386 \times 10^{-5}$ and $\varepsilon_\text{rel} = 7.8737 \times 10^{-5}$.
	These results show that the proposed hard-constraint PINN method is also efficient for  \eqref{eq:ocip-elliptic-ic}-\eqref{eq:control-constraint-elliptic-ic}, producing numerical solutions with high accuracy.
	\begin{figure}[htpb]
		\centering
		\subfloat[Exact control $u$.]{\includegraphics[width=0.32\textwidth]{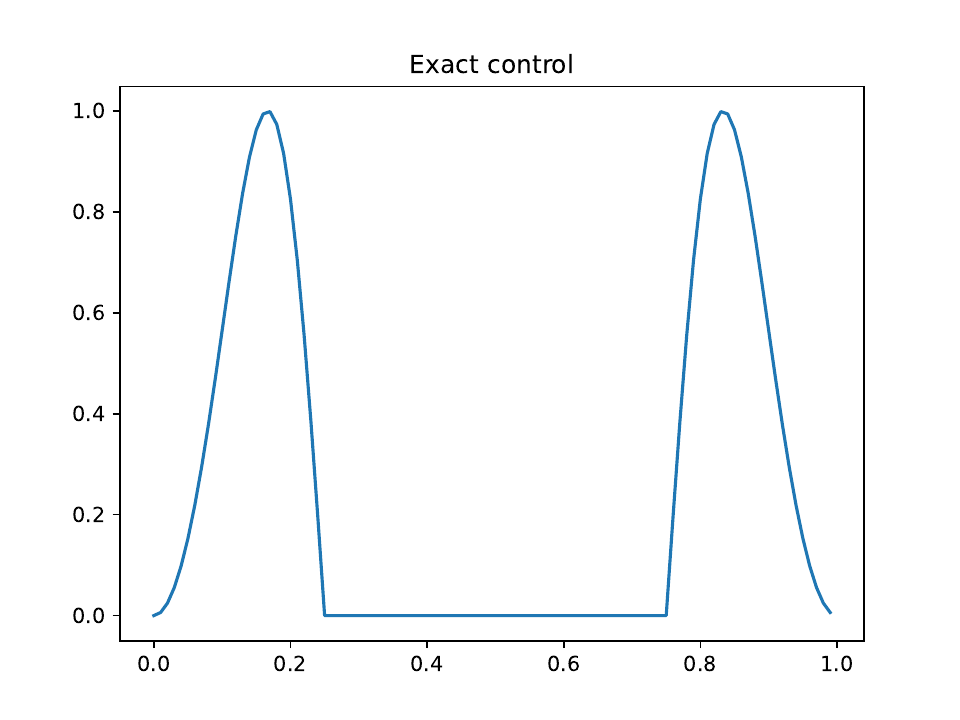}}
		\subfloat[Computed control $u$.]{\includegraphics[width=0.32\textwidth]{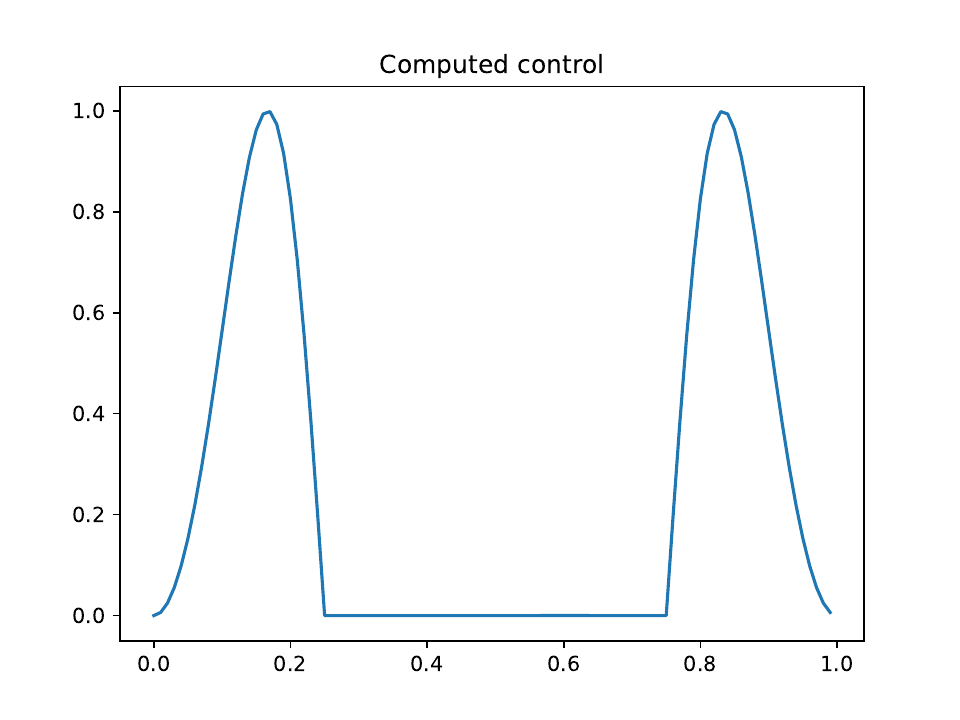}}
		\subfloat[Error of control $u$.]{\includegraphics[width=0.32\textwidth]{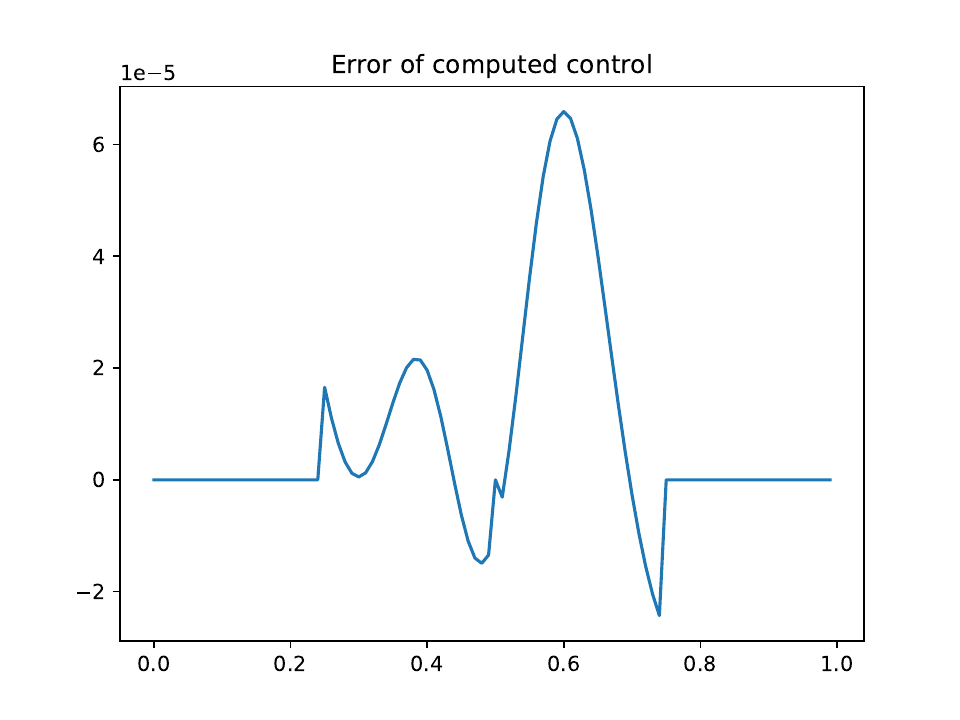}}\\
		\subfloat[Exact state $y$.]	{\includegraphics[width=0.32\textwidth]{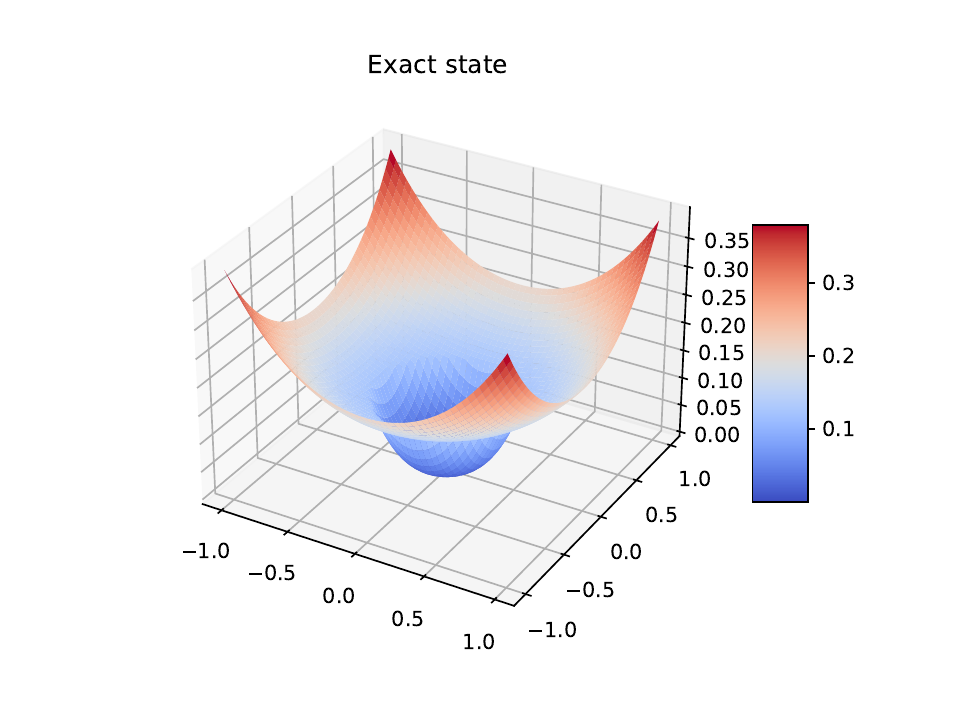}}
		\subfloat[Computed state $y$.]{\includegraphics[width=0.32\textwidth]{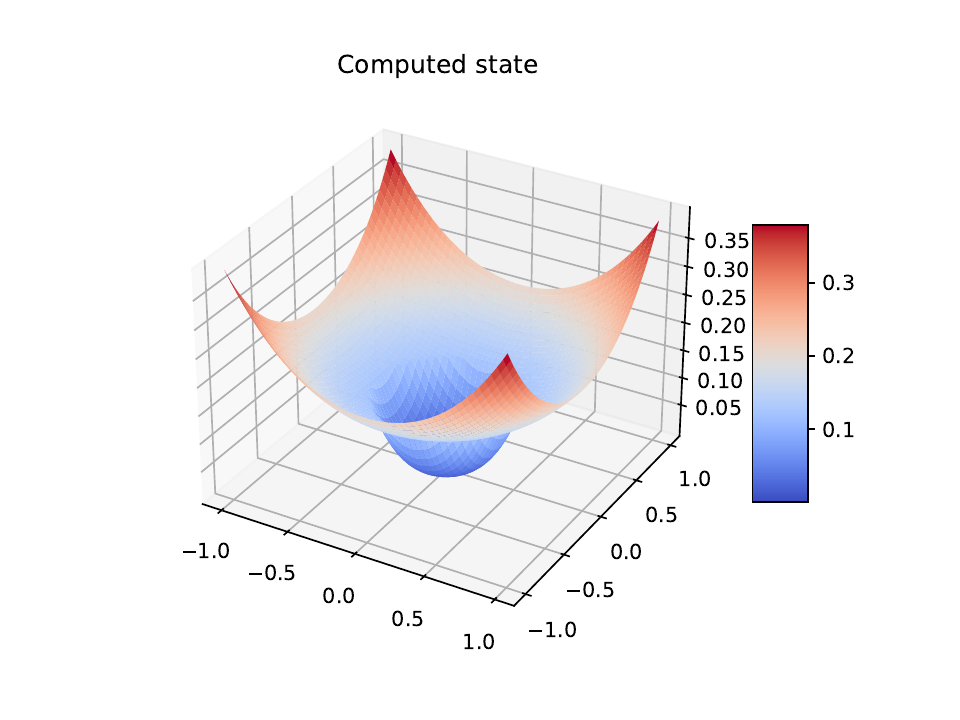}}
		\subfloat[Error of state $y$.]{\includegraphics[width=0.32\textwidth]{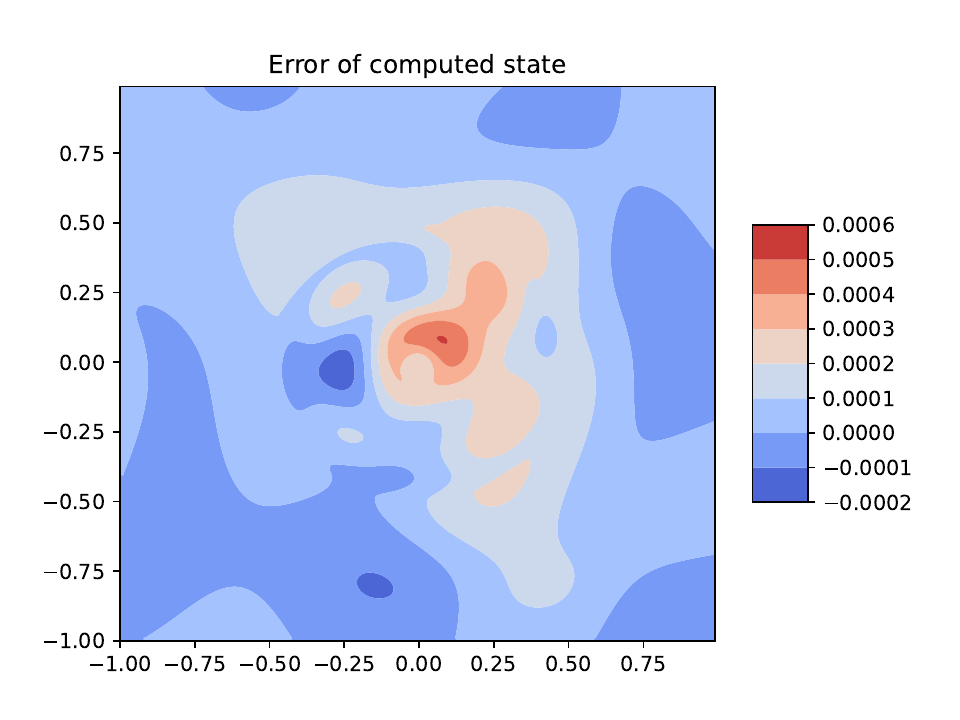}}
		\caption{Numerical results of the hard-constraint PINN method for \Cref{ex:elliptic-reg-ic}.}
         \label{fig:ex4-pinnhc-training-results}
	\end{figure}

\subsection{A parabolic interface optimal control problem}\label{subsec:parab-ocip}

In this subsection, we discuss the extension of  \Cref{alg:pinn-hc} to time-dependent problems.
To this end,  let $\Omega \subset \mathbb{R}^d~(d = 2, 3)$ be a bounded domain and $\Gamma \subset \Omega$ be the interface as the one defined in Section \ref{sec:preliminaries}. Consider the following optimal control problem:
\begin{equation}\label{eq:ocip-parabolic}
	\begin{aligned}
		&\min_{y\in L^2 (\Omega\times(0,T)), u\in L^2 (\Omega\times(0,T))}  J(y, u) := \dfrac{1}{2} \int_0^T \int_\Omega (y - y_d)^2 dx + \dfrac{\alpha}{2} \int_0^T \int_\Omega u^2 dx, \\
		& \mathrm{s.t.} \left\{
		\begin{aligned}
			& \frac{\partial y}{\partial t} -\nabla\cdot(\beta\nabla y)= u + f \quad &&\text{in}~(\Omega \backslash \Gamma) \times (0, T),\quad &&y= h_0 ~ &&\text{on}~\partial\Omega \times (0, T),\\
			& [y]_{\Gamma}=g_0,~ [\beta\partial_{\bm{n}}y]_{\Gamma}= g_1 \quad&&\text{on}~\Gamma \times (0, T),\quad&& y(0) = y_0~&&\text{in}~\Omega.\\
		\end{aligned}
		\right.
	\end{aligned}
\end{equation}
Above, the final time $T>0$ is a fixed constant, the function $y_d \in L^2(\Omega \times (0, T))$ is the target, and the constant $\alpha>0$ is a regularization parameter.
The functions $f, g_0, g_1, h_0$, and $y_0$ are given, and $\beta$ is a postive piecewise-constant function as the one defined  in Section \ref{sec:preliminaries}.
The interface $\Gamma$ is assumed to be time-invariant.
We also set the following constraint for the control variable:
\begin{equation}\label{eq:control-constraint-parab}
	u \in U_{ad} := \{ u \in L^2(\Omega \times (0, T)) : u_a \leq u \leq u_b \text{~a.e.~in~}\Omega\times(0,T) \},
\end{equation}
where $u_a, u_b \in L^2(\Omega \times (0, T))$.  By \cite{zhang2020immersed}, we have the following results.
\begin{theorem}[cf. \cite{zhang2020immersed}]
	\label{thm:opt-condition-parab}
	Problem  \eqref{eq:ocip-parabolic}-\eqref{eq:control-constraint-parab} admits a unique optimal control $(u^*,y^*)^\top \in U_{ad}\times L^2(\Omega\times(0,T))$, and the following first-order optimality system holds
	\begin{equation}\label{eq:opt-cond-parab}
		u^*=\mathcal{P}_{U_{ad}}\Big(-\frac{1}{\alpha}p^*\Big),
	\end{equation}
	where $\mathcal{P}_{U_{ad}}(\cdot)$ denotes the projection  onto $U_{ad}$, and  $p^*$ is the adjoint variable associated with $u^*$ which is obtained from the successive solution of the following two equations
	\small
	\begin{equation}\label{eq:state-parab}
		\hspace{3em}\left\{
		\begin{aligned}
			& \frac{\partial y^*}{\partial t} -\nabla\cdot(\beta\nabla y^*)= u^* + f ~ &&\text{in}~(\Omega \backslash \Gamma) \times (0, T),\quad&& y^*= h_0~&& \text{on}~\partial\Omega \times (0, T), \\
			& [y^*]_{\Gamma}=g_0, \quad [\beta\partial_{\bm{n}}y^*]_{\Gamma}= g_1 ~&&\text{on}~\Gamma \times (0, T),\quad&& y^*(0) = y_0~&& \text{in}~\Omega,\\
		\end{aligned}
		\right.
	\end{equation}
	\begin{equation}\label{eq:adjoint-parab}
		\left\{
		\begin{aligned}
			& -\frac{\partial p^*}{\partial t} - \nabla\cdot(\beta\nabla p^*)= y^*-y_d 
			~&& \text{in}~ (\Omega \backslash \Gamma) \times (0, T), \quad &&p^*= 0 ~&& \text{on}~\partial\Omega \times (0, T), \\
			& [p^*] _{\Gamma}=0, \quad [\beta \partial_{\bn}p^*]_{\Gamma}= 0,~ &&\text{on~} \Gamma \times (0, T),\quad && p^*(T) = 0 ~&& \text{in}~\Omega.\\
		\end{aligned}
		\right.
	\end{equation}
	\normalsize
\end{theorem}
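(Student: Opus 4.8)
The plan is to follow the classical reduction for linear–quadratic parabolic optimal control problems (cf. \cite{lions1971optimal,troltzsch2010optimal}), adapting the interface machinery already used for the elliptic case in \Cref{thm:optcond-elliptic}. First I would record the well-posedness of the state equation \eqref{eq:state-parab}: for each $u \in L^2(\Omega\times(0,T))$, the parabolic interface problem with source $u+f$, interface data $g_0,g_1$, boundary data $h_0$ and initial datum $y_0$ has a unique weak solution $y(u)$ in the usual space $W(0,T):=\{y\in L^2(0,T;H^1(\Omega\backslash\Gamma)):\partial_t y\in L^2(0,T;H^{-1})\}$ (with the transmission conditions incorporated into the weak formulation), and $y(u)$ depends continuously and affinely on $u$. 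This is the parabolic analogue of the standing well-posedness assumption on $\cI(y,u)=0$ and can be quoted from \cite{zhang2020immersed}. Writing $y(u)=Su+y_f$ with $S$ a bounded affine-linear solution operator and $Q:=\Omega\times(0,T)$, the objective reduces to $\hat J(u):=\tfrac12\|Su+y_f-y_d\|_{L^2(Q)}^2+\tfrac{\alpha}{2}\|u\|_{L^2(Q)}^2$.

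Next I would establish existence and uniqueness of the optimal control. The reduced functional $\hat J$ is continuous, strongly convex (with modulus $\alpha>0$) and coercive on $L^2(Q)$, while $U_{ad}$ is nonempty, closed, bounded and convex in $L^2(Q)$. Hence, by the direct method (weak lower semicontinuity of $\hat J$ on the weakly compact set $U_{ad}$, or equivalently strong convexity plus weak closedness), there is a unique minimizer $u^*\in U_{ad}$, and $y^*:=y(u^*)$ is the associated optimal state.

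Then I would derive the optimality system. Since $\hat J$ is convex and Gâteaux differentiable, $u^*$ solves \eqref{eq:ocip-parabolic}--\eqref{eq:control-constraint-parab} if and only if the variational inequality $\langle \hat J'(u^*),\,v-u^*\rangle_{L^2(Q)}\ge 0$ holds for all $v\in U_{ad}$, where $\hat J'(u^*)=S^*(y^*-y_d)+\alpha u^*$. I would identify $S^*(y^*-y_d)$ with the adjoint state $p^*$ solving the backward parabolic interface problem \eqref{eq:adjoint-parab} through the standard integration-by-parts/duality computation: testing the state equation against $p^*$ and vice versa, the homogeneous jump and boundary conditions on $p^*$ appear as the transposition of the ($u$-independent) transmission conditions on $y$, and the terminal condition $p^*(T)=0$ is dual to the initial condition $y^*(0)=y_0$. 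Finally, the variational inequality $\langle \alpha u^*+p^*,\,v-u^*\rangle_{L^2(Q)}\ge 0$ for all $v\in U_{ad}$ is, by the pointwise box structure of $U_{ad}$, equivalent to the pointwise projection formula \eqref{eq:opt-cond-parab}, $u^*=\mathcal{P}_{U_{ad}}(-p^*/\alpha)$; strong convexity gives that this necessary condition is also sufficient.

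The main obstacle is the rigorous well-posedness and regularity theory for the parabolic interface state and adjoint equations, in particular giving a precise weak meaning to the flux-jump condition $[\beta\partial_{\bm{n}}y]_\Gamma=g_1$ and justifying the integration-by-parts identity that yields the adjoint system with the correct homogeneous transmission conditions and terminal data. Since this is supplied by \cite{zhang2020immersed}, I would invoke it rather than reprove it, after which the remaining argument is the routine convex-analysis-plus-duality scheme above.
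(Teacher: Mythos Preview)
The paper does not actually prove this theorem: it is stated with the attribution ``cf.~\cite{zhang2020immersed}'' and no argument is given beyond the one-line remark afterwards that the problem is convex, so that the optimality system is equivalent to the original problem. Your outline---reduce to $\hat J(u)$ via the affine solution operator, obtain existence and uniqueness from strong convexity on the closed convex set $U_{ad}$, compute $\hat J'(u^*)=\alpha u^*+S^*(y^*-y_d)$, identify $S^*(y^*-y_d)$ with the solution $p^*$ of the backward adjoint interface problem via integration by parts, and rewrite the resulting variational inequality as the pointwise projection \eqref{eq:opt-cond-parab}---is the standard Lions--Tr\"oltzsch scheme and is exactly what the cited reference \cite{zhang2020immersed} carries out. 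So your proposal is correct and coincides with the approach the paper defers to; there is nothing further to compare.
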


Problem \eqref{eq:ocip-parabolic}-\eqref{eq:control-constraint-parab} is convex, and hence the solution to \eqref{eq:ocip-parabolic}--\eqref{eq:control-constraint-parab} can be obtained by simultaneously solving \eqref{eq:state-parab} and \eqref{eq:adjoint-parab}.
Next, we delineate the extension of  \Cref{alg:pinn-hc} to problem  \eqref{eq:ocip-parabolic}-\eqref{eq:control-constraint-parab}.  For this purpose,
let $\cN_y(x, t, \phi(x); \theta_y)$ and $\cN_p(x, t, \phi(x); \theta_p)$ be two neural networks with smooth activation functions, we then approximate the solutions of  \eqref{eq:state-parab} and \eqref{eq:adjoint-parab} by
\begin{equation}\label{eq:neural-form-hard-bcijic}
	\begin{aligned}
		& \hat{y}(x, t; \theta_y) = g(x, t) + t h(x) \cN_y(x, t, \phi(x); \theta_y), \\
		& \hat{p}(x, t; \theta_p) = (T - t) h(x) \cN_p(x, t, \phi(x); \theta_p).
	\end{aligned}
\end{equation}
Here, $\phi(x)$ is an auxiliary function satisfying \eqref{eq:aux-func-prop}, $h(x)$ is a function satisfying \eqref{eq:h-func-prop}, and both of them are independent of the variable $t$ since the interface $\Gamma$ and the boundary $\partial \Omega$ are time-invariant.
The function $g$ satisfies
\begin{equation}\label{eq:g-func-prop-with-t}
	\begin{aligned}
		& g= h_0, \text{~on~} \partial \Omega \times (0, T), \quad g(0) = y_0, \text{~in~} \Omega, \quad [g]_\Gamma = g_0 \text{~on~} \Gamma \times (0, T).
	\end{aligned}
\end{equation}
Then, using the same arguments as those in  \Cref{subsec:hc-bc}, it is easy to check that $\hat{y}(x, t; \theta_y)$ and $\hat{p}(x, t; \theta_p)$ strictly satisfy the interface, boundary, and initial conditions in  \eqref{eq:state-parab}-\eqref{eq:adjoint-parab} if functions $g$, $h$, and $\phi$ are given in analytic expressions. 
Moreover, we reiterate that,  following the discussions in  \Cref{subsec:hc-bc}, the functions $g, h$, and $\phi$ can be constructed in analytic forms or by neural networks.

To train the neural networks $\hat{y}(x, t; \theta_y)$ and $\hat{p}(x, t; \theta_p)$, we sample the training sets $\cT = \{(x^i, t^i)\}_{i=1}^M \subset \Omega \times (0, T)$ and $\cT_\Gamma = \{x_\Gamma^i, t_\Gamma^i\}_{i=1}^{M_\Gamma} \subset \Gamma \times (0, T)$, and consider the following loss function:
	\small
	\begin{equation}\label{eq:loss-parabolic-hc}
		{\begin{aligned}
				&\mathcal{L}_{HC}(\theta_y, \theta_p) \\
				= & \frac{w_{y, r}}{M} \sum_{i=1}^{M} \left| \frac{\partial \hat{y}(x^i, t^i; \theta_y)}{\partial t} -\Delta_x \hat{y}(x^i, t^i; \theta_{y})-\frac{\mathcal{P}_{[u_a(x^i, t^i), u_b(x^i, t^i)]}(-\frac{\hat{p}}{\alpha}(x^i, t^i; \theta_p) + f(x^i, t^i))}{\beta^\pm} \right| ^2 \\
				& + \frac{w_{y, \Gamma_n}}{M_{\Gamma}} \sum_{i=1}^{M_\Gamma} \left| [\beta\partial_{\bm{n}}\hat{y}]_\Gamma (x_{\Gamma}^i, t_\Gamma^i; \theta_{y}) - g_1(x_{\Gamma}^i, t_\Gamma^i) \right|^2 + \frac{w_{p, \Gamma_n}}{M_{\Gamma}}\sum_{i=1}^{M_\Gamma} \left|[\beta\partial_{\bm{n}}\hat{p}]_{\Gamma} (x^i_{\Gamma}, t_\Gamma^i; \theta_{p}) \right|^2\\
				& + \frac{w_{p, r}}{M} \sum_{i=1}^{M} \left| -\frac{\partial \hat{p}(x^i, t^i; \theta_p)}{\partial t} -\Delta_x \hat{p}(x_i, t^i; \theta_{p})-\frac{ \hat{y}(x^i, t^i; \theta_{y}) - y_d(x_i, t^i)}{\beta^\pm} \right|^2.
		\end{aligned}}
\end{equation}
\normalsize
 Then, we can easily obtain the hard-constraint PINN method for solving  \eqref{eq:state-parab}-\eqref{eq:adjoint-parab} and hence problem  \eqref{eq:ocip-parabolic}-\eqref{eq:control-constraint-parab}. 

\medskip

\noindent\textbf{Example 4.}\exmplabel{ex:parabolic}
	We test the hard-constraint PINN method for solving \eqref{eq:ocip-parabolic}-\eqref{eq:control-constraint-parab} with $\Omega = (-1, 1) \times (-1, 1) \subset \bR^2$, $\Gamma = \{x \in \Omega: \lVert x \rVert_2 \leq r_0\}$, $r_0 = 0.5$ and $T = \pi / 2$.
	The admissible set $U_{ad} = \{u \in L^2(\Omega \times (0, T)): -1 \leq u \leq 1 \text{~a.e. in}~\Omega\times (0,T)\}$.
	We further set $\alpha = 1$, $\beta^- = 1$, $\beta^+ = 3$, $g_0 = g_1 = 0$, $h_0 = 0$ and $y_0 = 0$.
	
	Following \cite{zhang2020immersed}, we let
	\begin{equation*}
		\left\{
		\begin{aligned}
			& p^*(x_1, x_2, t) = {5 \sin(T - t) (x_1^2 + x_2^2 - r_0^2)(x_1^2 - 1)(x_2^2-1)}/{\beta^\pm} \quad \text{in} ~ \Omega^\pm, \\
			& y^*(x_1, x_2, t) ={ 5 \cos(t - T) (x_1^2 + x_2^2 - r_0^2)(x_1^2 - 1)(x_2^2-1)}/{\beta^\pm} \quad \text{in} ~ \Omega^\pm, \\
			& u^*(x_1, x_2, t) = \max\{-1, \min\{1, -\dfrac{1}{\alpha} p^*(x_1, x_2, t)\}\},\\
			& f(x_1, x_2, t) = \frac{\partial y^*}{\partial t}(x_1, x_2, t) -u^*(x_1, x_2, t) - \nabla \cdot (\beta^\pm \nabla y^*(x_1, x_2, t)) ~\text{in}~\Omega^\pm, \\
			& y_d(x_1, x_2, t) = \frac{\partial p^*}{\partial t}(x_1, x_2, t) + y^*(x_1, x_2, t) + \nabla \cdot (\beta^\pm \nabla p^*(x_1, x_2, t)) ~\text{in}~\Omega^\pm.
		\end{aligned}
	\right.
	\end{equation*}
	Then it is easy to verify that $(u^*, y^*, p^*)^\top$ satisfies the optimality system  \eqref{eq:opt-cond-parab}-\eqref{eq:adjoint-parab}, and hence $(u^*, y^*)^\top$ is the solution of  \eqref{eq:ocip-parabolic}-\eqref{eq:control-constraint-parab}.
	
	We construct two neural networks $\cN_y(x, t, \phi(x); \theta_y)$ and $\cN_p(x, t, \phi(x); \theta_p)$ consisting of three hidden layers with $100$ neurons.
	The state and adjoint variables are approximated by \eqref{eq:neural-form-hard-bcijic} with $g(x, t) = 0$ and $h(x) = (x_1^2 - 1)(x_2^2 - 1)$.
	The auxiliary function $\phi$ is chosen as that in \eqref{eq:def_phi_ex}
	
	To evaluate the loss function \eqref{eq:loss-parabolic-hc}, we first select $\{t_i\}_{i=1}^{16}$ by the Chebyshev sampling over $(0, T)$. Then we randomly sample $\{x^i\}_{i=1}^{256} \subset \Omega$ and  $\{x_\Gamma^i\}_{i=1}^{64} \subset \Gamma$.  After that, we take the Cartesian product of $\{t_i\}_{i=1}^{16}$ and $\{x^i\}_{i=1}^{256}$ and $\{x_\Gamma^i\}_{i=1}^{64}$ respectively to generate the training sets $\mathcal{T}=\{(x^i, t^i)\}_{i=1}^{4096} \subset \Omega \times (0, T)$ and $\mathcal{T}_{\Gamma}=\{(x_\Gamma^i, t_\Gamma^i)\}_{i=1}^{1024} \subset \Gamma \times (0, T)$. We initialize the neural network parameters $\theta_y$ and $\theta_p$ following the default settings in PyTorch. The weights in \eqref{eq:loss-parabolic-hc} are all taken to be $1$.
	We implement 40,000 iterations of the ADAM to train the neural networks and the parameters $\theta_y$ and $\theta_p$ are optimized simultaneously in each iteration.
	The learning rate is set to be $3\times 10^{-3}$ in the first $3,000$ iterations, then $1\times 10^{-3}$ in the $3,001$ to $10,000$ iterations, and $5\times 10^{-4}$ in the $10,001$ to $20,000$ iterations, finally $1\times 10^{-4}$ in the $20,001$ to $40,000$ iterations.
	
	The computed results at $t=0.3T$ are presented in \Cref{fig:ex5-0.3-pinnhc-training-results}.
	We can see that the hard-constraint PINN method is capable to deal with time-dependent problems and a high-accurate numerical solution can be pursued.

    \begin{figure}[htbp]
		\centering
		\subfloat[Exact control $u$.]{\includegraphics[width=0.32\textwidth]{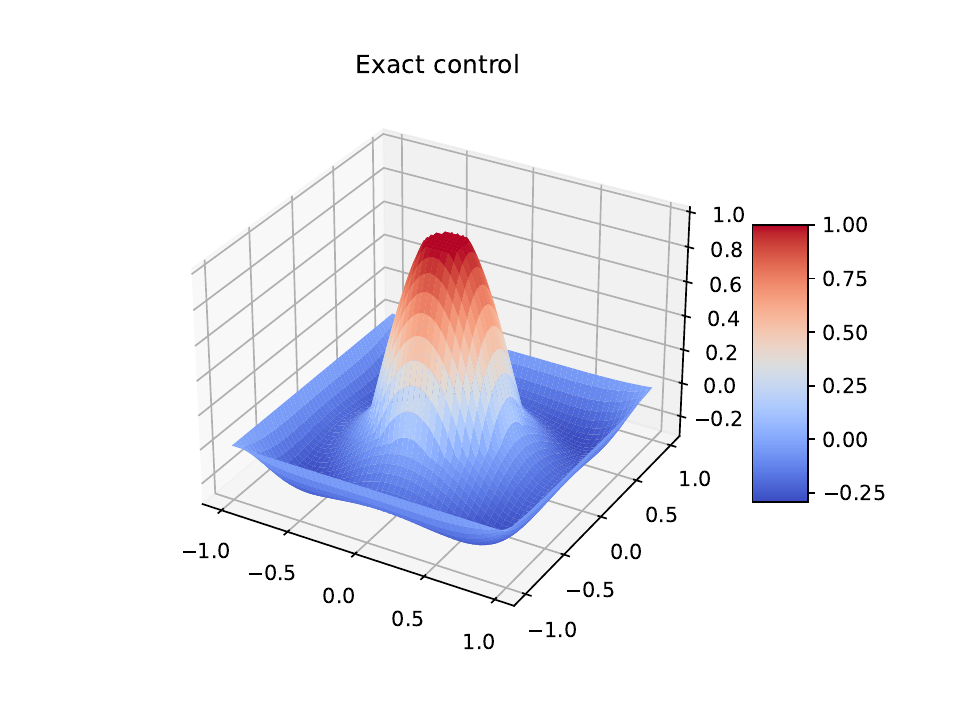}}
		\subfloat[Computed control $u$.]{\includegraphics[width=0.32\textwidth]{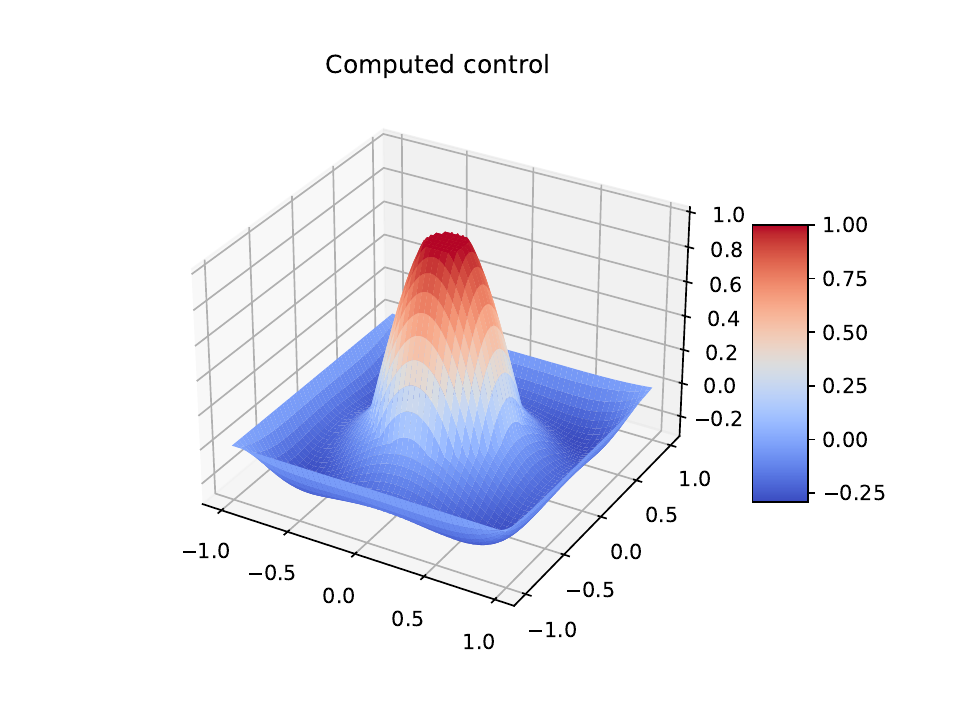}}
		\subfloat[Error of control $u$.]{\includegraphics[width=0.32\textwidth]{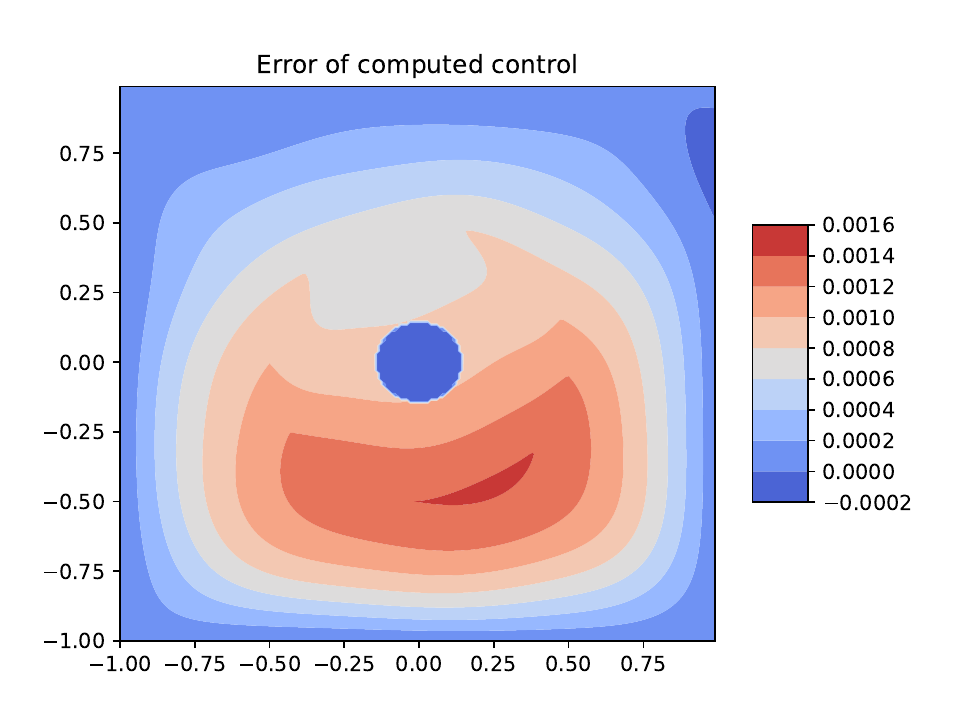}}\\
		\subfloat[Exact state $y$.]	{\includegraphics[width=0.32\textwidth]{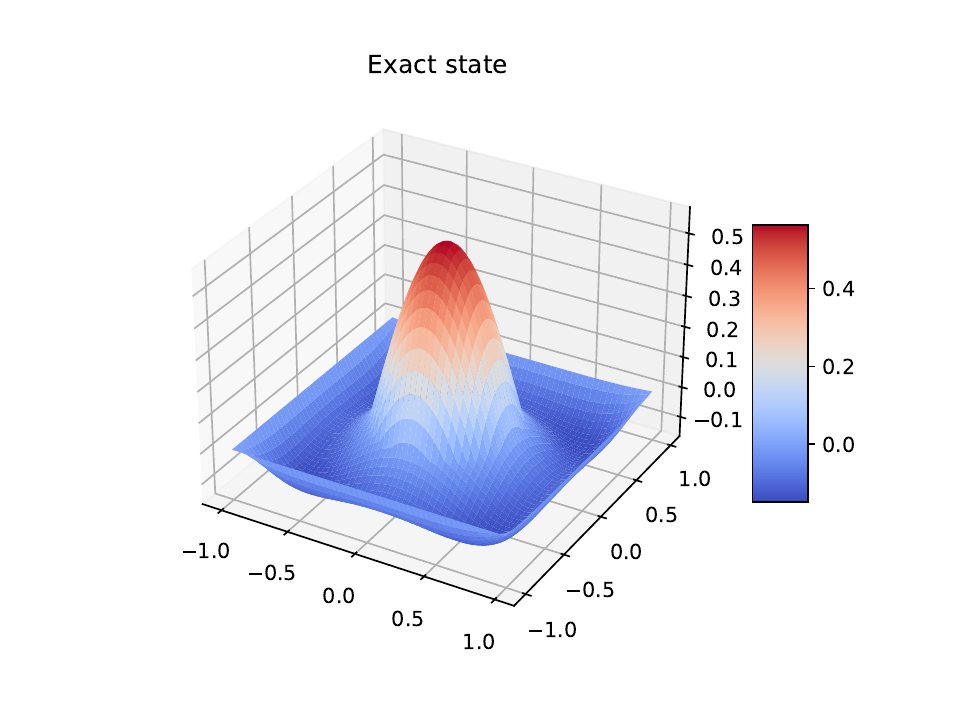}}
		\subfloat[Computed state $y$.]	{\includegraphics[width=0.32\textwidth]{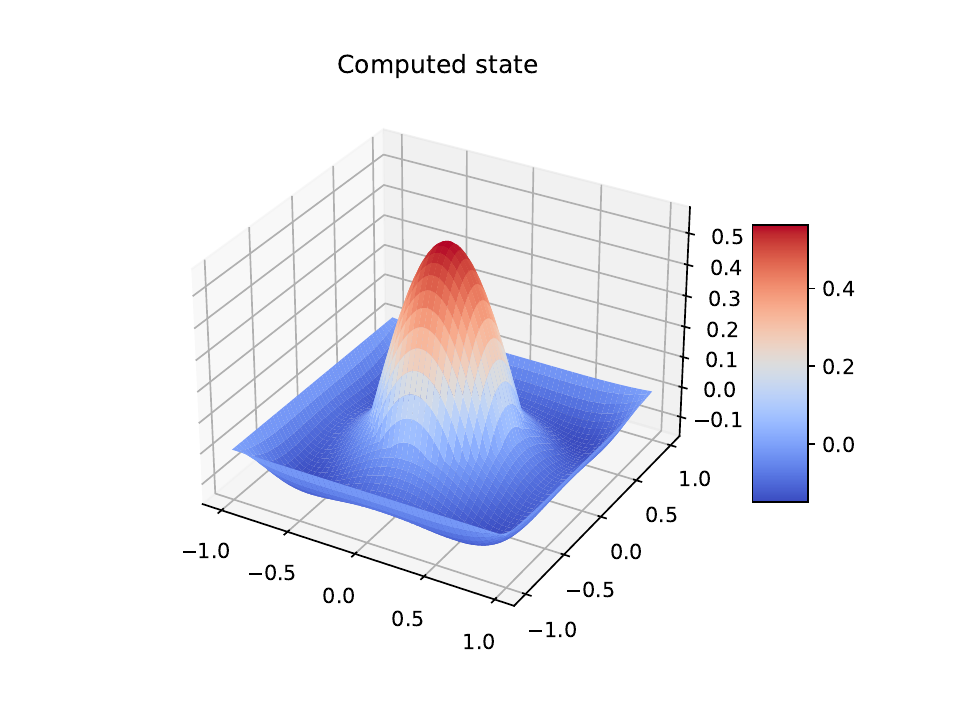}}
		\subfloat[Error of state $y$.]	{\includegraphics[width=0.32\textwidth]{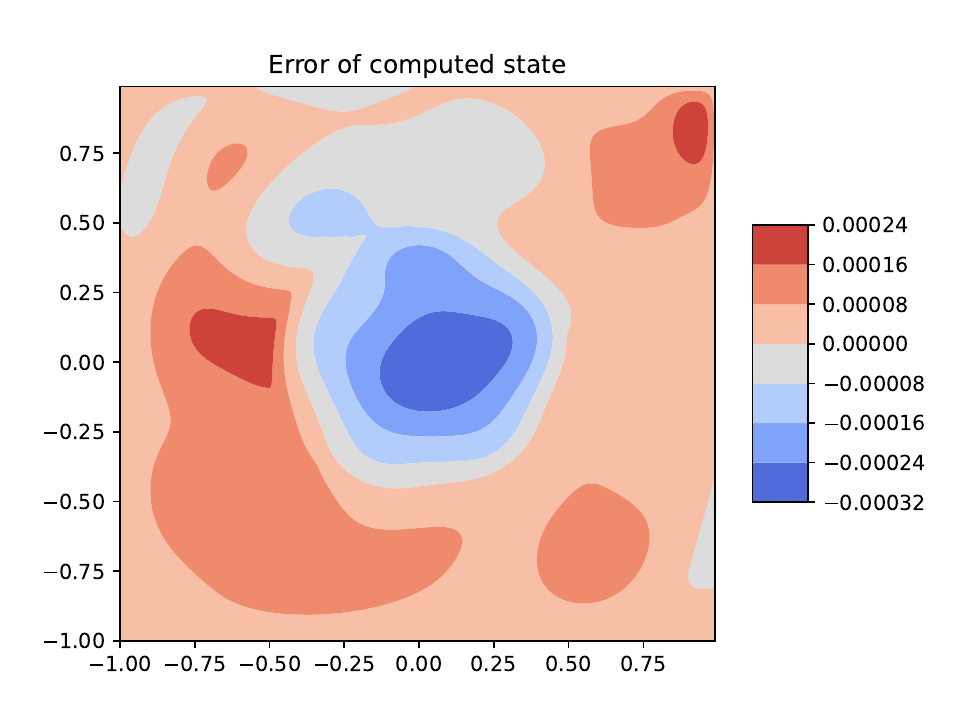}}
		\caption{Numerical results of the hard-constraint PINN method for \Cref{ex:parabolic}.}
	\label{fig:ex5-0.3-pinnhc-training-results}
	\end{figure}

\section{Conclusions and Perspectives}\label{sec:conclusion}

This paper explores the application of the physics-informed neural networks (PINNs) to optimal control problems subject to PDEs with interfaces and control constraints.  We first demonstrate that leveraged by the discontinuity capturing neural networks \cite{hu2022discontinuity}, PINNs can effectively solve such problems.  However, the boundary and interface conditions, along with the PDE, are treated as soft constraints by incorporating them into a weighted loss function. Hence, the boundary and interface conditions cannot be satisfied exactly and must be simultaneously learned with the PDE. This makes it difficult to fine-tune the weights and to train the neural networks, resulting in a loss of numerical accuracy.
To overcome these issues, we propose a novel neural network architecture designed to impose the boundary and interface conditions as hard constraints. The resulting hard-constraint PINNs guarantee both the boundary and interface conditions are satisfied exactly or with a high degree of accuracy, while being independent of the learning process for the PDEs. This hard-constraint approach significantly simplifies the training process and enhances the numerical accuracy. Moreover, the hard-constraint PINNs are mesh-free, easy to implement, scalable to different PDEs, and ensure rigorous satisfaction of the control constraints.
To validate the effectiveness of the proposed hard-constraint PINNs, we conduct extensive tests on various elliptic and parabolic interface optimal control problems.

Our work leaves some important questions for future research. For instance,
the high efficiency of the hard-constraint PINNs for interface optimal control problems emphasizes the necessity for convergence analysis and error estimate.
In the numerical experiments, we adopted fixed weights in the loss function and non-adaptive sampling methods for the training points, which may not be optimal. 
It is worth investigating adaptive weighting and sampling strategies, see e.g. \cite{hao2022physics,lu2021deepxde,mcclenny2020self,wang2021understanding,wu2023comprehensive}, to further improve the numerical accuracy of the hard-constraint PINNs.
In \Cref{subsec:parab-ocip}, we discuss the extension of the hard-constraint PINNs to parabolic interface optimal control problems, where the interface is assumed to be time-invariant.
A natural question is extending our discussions to the interfaces whose shape changes over time. Recall (\ref{eq:ij-gradient-comp}) that the interface-gradient condition $[\beta\partial_{\bm{n}}y]_{\Gamma}$ is still treated as a soft constraint.  
It is thus worth designing some more sophisticated neural networks such that this condition can also be imposed as a hard constraint and the numerical efficiency of PINNs can be further improved.  Finally, note that we focus on Dirichlet boundary conditions and it would be interesting to design some novel neural networks such that other types of boundary conditions (e.g., periodic conditions and Neumann  conditions) can be treated as hard constraints; and the ideas in \cite{lu2021physics} and \cite{sukumar2022exact} could be useful.
\newline

\bibliographystyle{siamplain}
\bibliography{references}

\end{document}